\numberwithin{equation}{section}
\newtheorem{thm}{Theorem}[section]
\newtheorem{lem}[thm]{Lemma}
\newtheorem{prop}[thm]{Proposition}
\newtheorem{cor}[thm]{Corollary}
\theoremstyle{definition}
\newtheorem{defn}[thm]{Definition}
\newtheorem{notation}[thm]{Notation}
\newcommand\type{\operatorname{type}}
\newcommand\muentry[1]{*(lightgray)\overline{#1}}
\newcommand\VT{\operatorname{VT}}
\newcommand\sh{\operatorname{sh}}
\newcommand\Sym{\mathfrak{S}}
\newcommand\Tab{\operatorname{Tab}}
\newcommand\LL{\mathcal{L}_\lm}
\renewcommand\L{\mathcal{L}}
\newcommand\lsnc{\LL^{\operatorname{SNC}}}
\newcommand\lnc{\LL^{\operatorname{NC}}}
\newcommand\oRSE{\overline{\RSE}}
\newcommand\col{\operatorname{col}}
\newcommand\row{\operatorname{row}}
\newcommand\pd{\phi_+}
\newcommand\pu{\phi_-}
\newcommand\ZZ{\mathbb{Z}}
\newcommand\NN{\mathbb{N}}
\newcommand\RSE{\operatorname{RSE}}
\newcommand\RPP{\operatorname{RPP}}
\newcommand\SSYT{\operatorname{SSYT}}
\newcommand\pp{\mathbf{p}}
\newcommand\sign{\operatorname{sign}}
\newcommand\lm{{\lambda/\mu}}
\renewcommand\gg{\widetilde{g}}
\newcommand\wt{\operatorname{wt}}
\newcommand\RSEgridonly[3]{
  \def\X{#1} \def\Y{#2} \def\W{#3}
  \foreach \i in {0,...,\X}
  {
    \draw[gray,very thin] (\i,0) -- (\i,\Y+\W+4);
  }
  \pgfmathsetmacro{\y}{\W+1}
  \pgfmathsetmacro{\z}{\W}
  \pgfmathsetmacro{\m}{\Y}
  \foreach \y in {0,...,\m}
  {
    \draw[gray,very thin] (0,\y) -- (\X,\y);
  }
  \pgfmathsetmacro{\m}{\W+2}
  \foreach \y in {2,...,\m}
  {
    \draw[gray,very thin] (0,\Y+\y) -- (\X,\Y+\y);
  }
  \draw[gray,very thin] (0,\Y+\W+4) -- (\X,\Y+\W+4);
}
\newcommand\RSEgrid[3]{
  \RSEgridonly{#1}{#2}{#3}
  \def\X{#1} \def\Y{#2} \def\W{#3}
  \node [left] at (0,\Y+2) {$\omega$};
  \node [left] at (0,\Y+\W+4) {$2\omega$};
  \node [left] at (0,\Y+1.3) {$\vdots$};
  \node [left] at (0,\Y+\W+3.3) {$\vdots$};
  \foreach \j in {0,...,\Y}
  {
    \node [left] at (0,\j) {\j};
  }
  \pgfmathsetmacro{\y}{\W+1}
  \pgfmathsetmacro{\z}{\W}
  \foreach \j in {1,...,\z}
  {
    \node [left] at (0,\Y+2+\j) {$\omega$+\j};
  }
}
\title{Jacobi--Trudi formula for refined dual stable Grothendieck polynomials}
\author{Jang Soo Kim}
\thanks{The author was supported by NRF grants \#2019R1F1A1059081 and \#2016R1A5A1008055.} 
\address{Department of Mathematics,
Sungkyunkwan University (SKKU), Suwon, Gyeonggi-do 16419, South Korea}
\email{jangsookim@skku.edu}
\keywords{Jacobi--Trudi formula, plane partition, Grothendieck
  polynomial, Young tableau}
\begin{document}

\begin{abstract}
  In 2007 Lam and Pylyavskyy found a combinatorial formula for the dual stable
  Grothendieck polynomials, which are the dual basis of the stable Grothendieck
  polynomials with respect to the Hall inner product. In 2016 Galashin,
  Grinberg, and Liu introduced refined dual stable Grothendieck polynomials by
  putting additional sequence of parameters in the combinatorial formula of Lam
  and Pylyavskyy. Grinberg conjectured a Jacobi--Trudi type formula for refined
  dual stable Grothendieck polynomials. In this paper this conjecture is proved
  by using bijections of Lam and Pylyavskyy.
\end{abstract}

\maketitle

\section{Introduction}

In 1982 Lascoux and Sch\"utzenberger \cite{LS1982} introduced \emph{Grothendieck
  polynomials}, which are representatives of the structure sheaves of the
Schubert varieties in a flag variety. Fomin and Kirillov \cite{FK1996} studied
Grothendieck polynomials combinatorially and introduced \emph{stable
  Grothendieck polynomials}, which are stable limits of Grothendieck
polynomials. Buch \cite{Buch2002} found a combinatorial formula for stable
Grothendieck polynomials using set-valued tableaux. Lam and Pylyavskyy
\cite{LP2007} first studied \emph{dual stable Grothendieck polynomials}
$g_\lambda(x)$, which are the dual basis of the stable Grothendieck polynomials
under the Hall inner product. They also found a combinatorial formula for
$g_\lambda(x)$ in terms of reverse plane partitions. Their formula gives a
combinatorial way to expand $g_\lambda(x)$ in terms of Schur functions
$s_\mu(x)$.

Dual stable Grothendieck polynomials $g_\lambda(x)$ are inhomogeneous symmetric
functions in variables $x=(x_1,x_2,\dots)$. Galashin, Grinberg, and Liu
\cite{GGL2016} introduced \emph{refined dual stable Grothendieck polynomials}
$\gg_{\lm}(x;t)$ by putting an additional sequence $t=(t_1,t_2,\dots)$ of
parameters in the combinatorial formula of Lam and Pylyavskyy. They showed that
$\gg_{\lm}(x;t)$ is also symmetric in $x$. Refined dual stable Grothendieck
polynomials generalize both dual stable Grothendieck polynomials and Schur
functions: if $t_i=1$ for all $i\ge1$, then $\gg_{\lm}(x;t)=g_{\lm}(x)$, and if
$t_i=0$ for all $i\ge1$, then $\gg_{\lm}(x;t)=s_{\lm}(x)$. Galashin
\cite{Galashin2017} found a Littlewood--Richardson rule to expand
$\gg_{\lm}(x;t)$ in terms of Schur functions. Yeliussizov \cite{Yeliussizov2017}
further studied (dual) stable Grothendieck polynomials and showed the following
Jacobi--Trudi type formula for $\gg_{\lambda}(x;t)$ originally conjectured by
Darij Grinberg \cite{Grinberg_conj}:
\begin{equation}
  \label{eq:Yeliussizov}
  \widetilde{g}_{\lambda}(x;t) = \det
  \left( e_{\lambda'_i-i+j}(x_1,x_2,\dots,t_1,t_2,\dots,t_{\lambda'_i-1})
  \right)_{1\le i,j\le n},
\end{equation}
where $e_k(z_1,z_2,\dots)=\sum_{i_1<i_2<\dots<i_k}z_{i_1}z_{i_2}\dots z_{i_k}$
is the $k$th elementary symmetric function and we define $e_0(z_1,z_2,\dots)=1$
and $e_k(z_1,z_2,\dots)=0$ for $k<0$. We also note that the Jacobi--Trudi
formula \eqref{eq:Yeliussizov} for the case $t_i=1$ for all $i\ge1$ was first
proved by Shimozono and Zabrocki \cite{ShimozonoZabrocki}.

The main result of this paper is the following Jacobi--Trudi formula for the
refined dual stable Grothendieck polynomial $\gg_{\lm}(x;t)$, which was also
conjectured by Darij Grinberg \cite[slide 72]{Grinberg_conj} in 2015. See
Section~\ref{sec:determ-expans} for the precise definitions.

\begin{thm}\label{thm:main}
  Let $\lambda$ and $\mu$ be partitions with $\ell(\lambda')\le n$. Then
\[
  \widetilde{g}_{\lambda/\mu}(x;t) = \det
  \left( e_{\lambda'_i-\mu'_j-i+j}
    (x_1,x_2,\dots,t_{\mu'_j+1},t_{\mu'_j+2},\dots,t_{\lambda'_i-1})
  \right)_{1\le i,j\le n},
\]
where, if $\mu'_j+1>\lambda'_i-1$, the $(i,j)$ entry is defined to be
$e_{\lambda'_i-\mu'_j-i+j} (x_1,x_2,\dots)$.
\end{thm}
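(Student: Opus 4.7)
My approach would be to prove the identity by the Lindström--Gessel--Viennot (LGV) method applied to a lattice model in which both $x_i$'s and $t_r$'s appear as step weights. The setup suggested by the two-tier grid of the paper is this: construct a lattice whose rows are divided into a finite ``$t$-region'' labeled $1,2,3,\ldots$ (where a north step through row $r$ may be given weight $t_r$) and an infinite ``$x$-region'' labeled $\omega+1,\omega+2,\ldots$ (where a north step at level $\omega+k$ has weight $x_k$). Place the $j$-th source $A_j$ at height $\mu'_j$ and the $i$-th sink $B_i$ at height $\lambda'_i$ (in the $x$-region), with horizontal coordinates $-j$ and $-i$ respectively. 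A weighted lattice path from $A_j$ to $B_i$ that takes marked north steps at strictly increasing labels drawn from $\{t_{\mu'_j+1},\ldots,t_{\lambda'_i-1},x_1,x_2,\ldots\}$ will then have generating function exactly $e_{\lambda'_i-\mu'_j-i+j}(x_1,x_2,\ldots,t_{\mu'_j+1},\ldots,t_{\lambda'_i-1})$, matching the $(i,j)$-entry of the determinant.

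Given this interpretation, the standard LGV sign-reversing involution on pairs of crossing paths expresses the determinant as a signed sum over $n$-tuples $(P_1,\ldots,P_n)$ of paths from $A_j$ to $B_{\sigma(j)}$, which collapses to the unsigned sum over non-intersecting path families with $\sigma=\mathrm{id}$. The final and main step is then a weight-preserving bijection between non-intersecting families of such $t$-and-$x$-weighted paths and the reverse plane partitions of shape $\lambda/\mu$ whose generating function defines $\widetilde g_{\lambda/\mu}(x;t)$. Roughly, the $j$-th path encodes the $j$-th column of the RPP: its marked $x$-steps record the distinct column entries (contributing the $x$-weight), while its $t$-steps record the rows in which the column entry coincides with the one above (contributing the $t$-weight). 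The non-intersection constraint on paths should correspond precisely to the row-weakly-increasing condition on RPPs.

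\textbf{Main obstacle.} The subtle step is this last bijection: verifying that the $t$-record produced by the path encoding matches the $t$-statistic built into the Galashin--Grinberg--Liu definition of $\widetilde g_{\lambda/\mu}(x;t)$, and that the non-intersection of adjacent paths is exactly the condition for a valid RPP (not merely a tuple of valid columns). For the unrefined case ($t_i = 1$), such a correspondence is implicit in the Lam--Pylyavskyy bijections between RPPs and certain column-strict objects; my plan is to lift these bijections to the refined setting by carefully tracking, at each ``merge'' in the column, which row-index $r$ is responsible, and showing that this assignment is equivariant under the local moves used by Lam and Pylyavskyy to prove symmetry. Once this refined bijection is in place, combining it with the LGV reduction of the determinant completes the proof.
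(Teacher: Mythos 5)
Your path model and the identification of each matrix entry with the generating function $e_{\lambda'_i-\mu'_j-i+j}(x_1,x_2,\dots,t_{\mu'_j+1},\dots,t_{\lambda'_i-1})$ is exactly the paper's starting point (cf.\ \eqref{eq:e}), and for $\mu=\emptyset$ your sketch is essentially Yeliussizov's proof of \eqref{eq:Yeliussizov}. But for a genuine skew shape the second step fails: the standard LGV tail-swap is not well defined here. The set of admissible $t$-labels on a path depends on its \emph{source} (no $t_r$ with $r\le \mu'_j$) as well as its sink (no $t_r$ with $r\ge\lambda'_i$), so swapping tails of two crossing paths whose sources carry different values of $\mu'$ generally produces a path using a forbidden $t_r$, i.e.\ a configuration that is not counted by any entry of the matrix; see Figure~\ref{fig:forbidden}. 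Consequently the determinant does \emph{not} collapse to the unsigned sum over non-intersecting families. Tail-swapping is only valid between paths whose sources share the same $\mu'$-value, which reduces the determinant to the signed sum over \emph{semi-noncrossing} families (Proposition~\ref{prop:snc}), and a substantial signed residue remains to be cancelled.

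Your final step is also quantitatively wrong, independently of the involution: non-intersecting families correspond to tuples of column-strict data together with $\mu$-elegant markings (the set $\RSE_1(\lm)$ in the paper's language), and in the skew case the Lam--Pylyavskyy correspondence from $\RPP(\lm)$ into these objects is only an \emph{injection}, not a bijection (Proposition~\ref{prop:wpinj}; Figure~\ref{fig:not in image} gives an explicit element of $\RSE_1((2,2)/(1))$ not coming from any RPP). Since all weights are monomials with nonnegative coefficients, the non-intersecting sum strictly exceeds $\gg_\lm(x;t)$ whenever $\mu\neq\emptyset$, so no weight-preserving bijection of the kind you propose can exist, and ``lifting the Lam--Pylyavskyy bijections to the refined setting'' cannot repair this. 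What is actually needed is a new sign-reversing involution on the semi-noncrossing families themselves: the paper builds one (Definition~\ref{def:Phi}, Theorem~\ref{thm:involution}) out of the level-raising and level-lowering maps $\pd$ and $\pu$, whose fixed points are precisely the families whose tableau lies in $\pu^{\ell-1}(\RSE_\ell(\lm))$, and only that image set has generating function $\gg_\lm(x;t)$ (Proposition~\ref{prop:glm1}). This cancellation mechanism is the real content of the theorem and is missing from your plan.
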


Note that if $t_i=0$ for all $i\ge1$, then Theorem~\ref{thm:main} reduces to the
classical (dual) Jacobi--Trudi formula for the Schur function $s_\lm(x)$.
Theorem~\ref{thm:main} gives another proof of the fact that $\gg_{\lm}(x;t)$ is
symmetric in the variables $x$. If $t_i=1$ for all $i$, we obtain a new
Jacobi--Trudi formula for the dual stable Grothendieck polynomial $g_\lm(x)$.

\begin{cor}\label{cor:main}
  Let $\lambda$ and $\mu$ be partitions with $\ell(\lambda')\le n$. Then
\[
  g_{\lambda/\mu}(x) = \det
  \left(\sum_{k\ge0} \binom{\lambda_i'-\mu_j'-1}{k} e_{\lambda'_i-\mu'_j-i+j-k}
    (x_1,x_2,\dots) \right)_{1\le i,j\le n},
\]
where we define $\binom{m}{k}=\delta_{k,0}$ if $m<0$.
\end{cor}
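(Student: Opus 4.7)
The plan is to derive Corollary~\ref{cor:main} as a direct specialization of Theorem~\ref{thm:main}. Since $\gg_{\lm}(x;t) = g_{\lm}(x)$ when $t_i = 1$ for all $i$ (as noted in the introduction), it suffices to substitute $t_i = 1$ for all $i$ into each entry of the determinant in Theorem~\ref{thm:main} and simplify. No new combinatorics is required beyond what the theorem provides.

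The main computation is on a single entry. After the substitution, the $(i,j)$ entry in the generic case becomes
\[
e_{\lambda'_i - \mu'_j - i + j}\bigl(x_1, x_2, \ldots, \underbrace{1, 1, \ldots, 1}_{m}\bigr),
\]
where $m = \lambda'_i - \mu'_j - 1$ is the number of $t$-arguments that were present before the specialization. The key identity I would invoke is
\[
e_k(y_1, y_2, \ldots, \underbrace{1, 1, \ldots, 1}_{m}) = \sum_{r \geq 0} \binom{m}{r} e_{k-r}(y_1, y_2, \ldots),
\]
which follows immediately by splitting each monomial in $e_k$ according to how many of its $k$ distinguished factors are drawn from the trailing $1$s versus from the $y_i$s. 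Applying this with $k = \lambda'_i - \mu'_j - i + j$ and $m = \lambda'_i - \mu'_j - 1$ produces exactly the entry appearing in the determinant of Corollary~\ref{cor:main}.

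The only subtle point, and the only place where a routine verification is actually required, is the boundary case $\mu'_j + 1 > \lambda'_i - 1$, i.e.\ $m \le 0$. In that case Theorem~\ref{thm:main} defines the entry to be $e_{\lambda'_i - \mu'_j - i + j}(x_1, x_2, \ldots)$ with no extra arguments, so the specialization at $t_i = 1$ leaves it unchanged. On the corollary side, the convention $\binom{m}{k} = \delta_{k,0}$ for $m < 0$, together with the standard equality $\binom{0}{k} = \delta_{k,0}$, collapses the displayed sum to its $k = 0$ term, giving the same expression. This compatibility check is not really an obstacle, but it is the only place where one could introduce an off-by-one or sign error, so I would write it out explicitly.

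Combining the two computations entry by entry, the determinant in Theorem~\ref{thm:main} under $t_i = 1$ coincides termwise with the determinant in Corollary~\ref{cor:main}, which completes the argument. In effect, Corollary~\ref{cor:main} is not a separate theorem to be proved but a bookkeeping rewriting of Theorem~\ref{thm:main} that makes the dependence on the number of column entries of $\lambda/\mu$ visible through the binomial coefficients.
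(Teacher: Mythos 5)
Your proposal is correct and matches the paper's (implicit) proof exactly: the paper presents Corollary~\ref{cor:main} as the $t_i=1$ specialization of Theorem~\ref{thm:main}, and your expansion $e_k(x_1,x_2,\dots,1^m)=\sum_{r\ge0}\binom{m}{r}e_{k-r}(x)$ with $m=\lambda'_i-\mu'_j-1$, together with the boundary check for $m\le 0$ via the convention $\binom{m}{k}=\delta_{k,0}$, is precisely the intended bookkeeping.
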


There is a standard combinatorial method to prove a Jacobi--Trudi type formula
using the Lindstr\"om--Gessel--Viennot lemma \cite{LGV,Lindstrom}.
First interpret the determinant as a signed sum of $n$-paths, i.e., sequences
$(p_1,\dots,p_n)$ of $n$ paths in a certain lattice. If there are intersections
among the $n$ paths, choose an intersection in a controlled way and exchange the
``tails'' of the two paths through this intersection. This will give a
sign-reversing involution on the total $n$-paths leaving only the
non-intersecting $n$-paths as fixed points. Then one interprets the
non-intersecting $n$-paths as the desired tableaux by a simple bijection.
This is how Yeliussizov \cite{Yeliussizov2017} proved \eqref{eq:Yeliussizov}.

However, the Jacobi--Trudi formula in Theorem~\ref{thm:main} cannot be proved in
this way. Because of the restriction of a path depending on the initial point,
the usual method of exchanging tails is not applicable. In this paper we prove
Theorem~\ref{thm:main} by finding a sign-reversing involution on certain
$n$-paths using two maps introduced by Lam and Pylyavskyy \cite{LP2007} as
intermediate steps in their bijection between reverse plane partitions and pairs
of semistandard Young tableaux and so-called elegant tableaux.

The remainder of this paper is organized as follows. In
Section~\ref{sec:determ-expans} we give necessary definitions and notation, and
show that the determinant in Theorem~\ref{thm:main} is a generating function for
``semi-noncrossing'' $n$-paths. In Section~\ref{sec:vertical-tableaux} we define
vertical tableaux and give a bijection between them and $n$-paths. In
Section~\ref{sec:rse-tableaux} we define RSE-tableaux and review two bijections
$\pu$ and $\pd$ on RSE-tableaux due to Lam and Pylyavskyy. In
Section~\ref{sec:skew-rse-tableaux}, we extend the definition of RSE-tableaux to
skew shapes and study properties of the maps $\pu$ and $\pd$ on skew
RSE-tableaux. In Section~\ref{sec:sign-revers-invol} we give a sign-reversing
involution on semi-noncrossing $n$-paths using these maps and complete the proof
of Theorem~\ref{thm:main}. In Section~\ref{sec:example-phi} we give a concrete
example of the sign-reversing involution defined in
Section~\ref{sec:sign-revers-invol}.

We note that Amanov and Yeliussizov \cite{AmanovYeliussizov} proved
Theorem~\ref{thm:main} and Corollary~\ref{cor:main} independently about the same
time this paper was written. Their proof uses a sign-reversing involution on
3-dimensional lattice paths. It would be interesting to see whether there is a
connection between their sign-reversing involution and ours.

\section{Definitions and notation}
\label{sec:determ-expans}

In this section we give basic definitions and notation which will be used
throughout this paper.

A \emph{partition} $\lambda=(\lambda_1,\lambda_2,\dots,\lambda_\ell)$ is a
weakly decreasing sequence of positive integers. Each $\lambda_i$ is called a
\emph{part} of $\lambda$. The \emph{length} $\ell(\lambda)$ of $\lambda$ is the
number of parts. Sometimes we will append some zeros at the end of $\lambda$ so
that for example $(4,3,1)$ and $(4,3,1,0,0)$ are considered as the same
partition, and $\lambda_i=0$ whenever $i>\ell(\lambda)$.

The \emph{Young diagram} of $\lambda$ is defined to be the set $\{(i,j)\in\ZZ^2:
1\le i\le \ell(\lambda), 1\le j\le \lambda_i\}$. From now on we will identify
$\lambda$ with its Young diagram. The Young diagram of $\lambda$ will be
visualized by placing a unit square, called a \emph{cell}, in the $i$th row and
$j$th column for each $(i,j)\in\lambda$. The \emph{conjugate} $\lambda'$ of
$\lambda$ is defined to be the partition given by
$\lambda'=\{(i,j):(j,i)\in\lambda\}$, see Figure~\ref{fig:YD}.

\begin{figure}
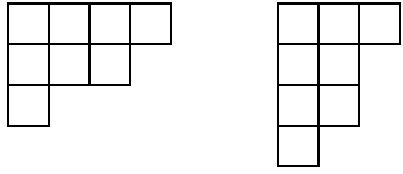

  \centering
  \begin{ytableau}
   {} & & & \\
   {} & &  \\
   {}  \\
   \none  \\
  \end{ytableau}\qquad\qquad
  \begin{ytableau}
   {}  & & \\
   {}  &  \\
   {}  &  \\
   {}  \\
  \end{ytableau}
  \caption{The Young diagram of $\lambda=(4,3,1)$ on the left and its conjugate
    $\lambda'=(3,2,2,1)$ on the right.}
  \label{fig:YD}
\end{figure}

For two partitions $\lambda$ and $\mu$, we write $\mu\subseteq\lambda$ if
$\mu_i\le \lambda_i$ for all $i\ge1$. In this case the \emph{skew shape} $\lm$
is defined to be the set-theoretic difference $\lambda-\mu$ of their Young
diagrams.

For a skew shape $\lm$ and an integer $k\ge1$, define $\row_{\le k}(\lm)$
(resp.~$\row_{\ge k}(\lm)$) to be the skew shape obtained from $\lm$ by taking
the rows of index $j\le k$ (resp.~$j\ge k$). Similarly, we define $\col_{\le
  k}(\lm)$ and $\col_{\ge k}(\lm)$ using columns. See Figure~\ref{fig:skew
  shape}.

\begin{figure}
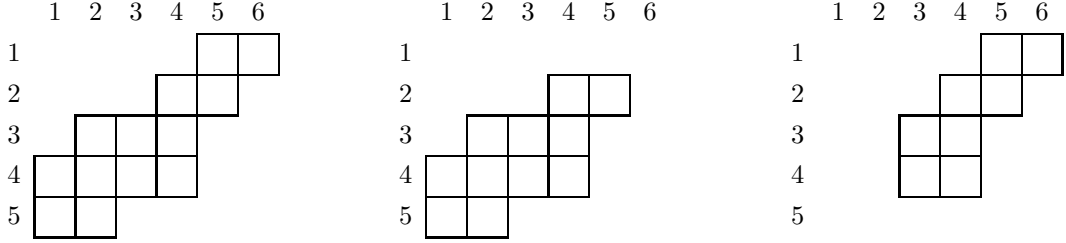

  \centering
  \begin{ytableau}
  \none& \none[1] & \none[2] & \none[3] &\none[4]  &\none[5]& \none[6]\\
  \none[1]& \none & \none & \none &\none  && \\
  \none[2]& \none&\none&\none & & \\
  \none[3]& \none & & & \\
  \none[4]& {} & & & \\
  \none[5]& {} & \\
  \end{ytableau}\qquad\qquad
  \begin{ytableau}
  \none& \none[1] & \none[2] & \none[3] &\none[4]  &\none[5]& \none[6]\\
  \none[1]&\none & \none & \none &\none  &\none& \none\\
  \none[2]&\none&\none&\none & & \\
  \none[3]&\none & & & \\
  \none[4]&{} & & & \\
  \none[5]&{} & \\
  \end{ytableau}\qquad\qquad
  \begin{ytableau}
  \none& \none[1] & \none[2] & \none[3] &\none[4]  &\none[5]& \none[6]\\
  \none[1]& \none & \none & \none &\none  && \\
  \none[2]&  \none&\none&\none & & \\
  \none[3]& \none &\none & & \\
  \none[4]& \none & \none& & \\
  \none[5]& \none & \none\\
  \end{ytableau}
  \caption{The skew diagram $\lm$ for $\lambda=(6,5,4,4,2)$ and $\mu=(4,3,1)$ on
    the left, $\row_{\ge2}(\lm)$ in the middle and $\col_{\ge3}(\lm)$ on the
    right. For visibility the row and column indices are written.}
  \label{fig:skew shape}
\end{figure}

Let $\rho$ be a finite subset of $\ZZ^+\times\ZZ^+$, where $\ZZ^+$ is the set of
positive integers. A \emph{tableau} of shape $\rho$ is just a map $T:\rho\to Z$,
where $Z$ is a linearly ordered set. If $T:\rho\to Z$ is a tableau we write
$\sh(T)=\rho$. 

A \emph{reverse plane partition (RPP)} of shape $\lm$ is a tableau
$R:\lm\to\ZZ^+$ such that the entries weakly increase in each row and
column, i.e., $R(i,j)\le R(i,j+1)$ and $R(i,j)\le R(i+1,j)$ whenever these
values are defined. The set of RPPs of shape $\lm$ is denoted by $\RPP(\lm)$.
For $R\in \RPP(\lm)$, the \emph{weight} of $R$ is defined by
\begin{equation}
  \label{eq:wtR}
\wt(R)=\prod_{i\ge1}x_i^{a_i(R)} t_i^{b_i(R)},
\end{equation}
where $a_i(R)$ is the number of columns containing an $i$ and $b_i(R)$ is the
number of cells $(i,j)$ such that $R(i,j)=R(i+1,j)$. For example if $R$ is the
RPP in Figure~\ref{fig:RPP}, then $\wt(R)=x_1^4x_2^3x_3^2t_1 t_3^2t_4$.

\begin{figure}
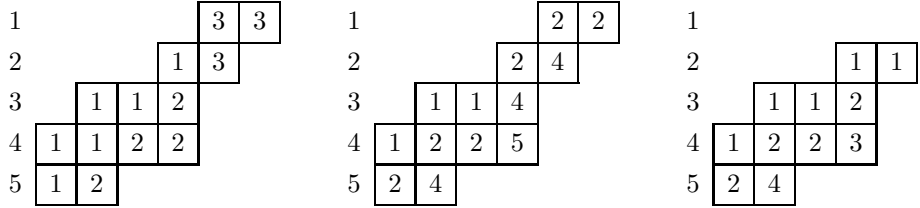

  \centering
  \begin{ytableau}
   \none[1]& \none & \none & \none &\none  &3&3 \\
   \none[2]& \none&\none&\none &1 &3 \\
   \none[3]& \none & 1& 1&2 \\
   \none[4]& 1 &1 &2 &2 \\
   \none[5]& 1 &2 \\
  \end{ytableau}\qquad
  \begin{ytableau}
   \none[1]&\none & \none & \none &\none  &2&2 \\
   \none[2]&\none&\none&\none &2 &4 \\
   \none[3]&\none & 1& 1& 4\\
   \none[4]&{1} &2 &2 &5 \\
   \none[5]&{2} &4 \\
  \end{ytableau}\qquad
  \begin{ytableau}
   \none[1]&\none & \none & \none &\none  &\none \\
   \none[2]&\none&\none&\none &1 &1 \\
   \none[3]&\none & 1& 1& 2\\
   \none[4]&{1} &2 &2 &3 \\
   \none[5]&{2} &4 \\
  \end{ytableau}
  \caption{An RPP of shape $(6,5,4,4,2)/(4,3,1)$ on the left, an SSYT of shape
    $(6,5,4,4,2)/(4,3,1)$ in the middle and an elegant tableau of shape $(6,5,4,4,2)/(6,4,3,1)$ on the right. The
    row indices are written on the left of each diagram.}
  \label{fig:RPP}
\end{figure}

A \emph{semistandard Young tableau (SSYT)} is an RPP with the extra condition
that the entries are strictly increasing in each column. The set of SSYTs of
shape $\lm$ is denoted by $\SSYT(\lm)$. An \emph{elegant tableau} is an SSYT $E$
of a certain skew shape $\lambda/\nu$ such that $1\le E(i,j)\le i-1$ for all
$(i,j)\in \lambda/\nu$. See Figure~\ref{fig:RPP}. Elegant tableaux were first
considered by Lenart \cite[Theorem~2.7]{Lenart2000} and further studied by Lam
and Pylyavskyy \cite{LP2007}.

Note that if $R\in\SSYT(\lm)\subseteq\RPP(\lm)$, then $R$ has no repeated
entries in each column and therefore the weight of $R$ defined in \eqref{eq:wtR}
is given by
\[
  \wt(R) = x_R=x_1^{c_1(T)}x_2^{c_2(T)}\cdots,
\]
where $c_i(T)$ is the number of $i$'s in $R$. For example, if $R$ is the SSYT in
Figure~\ref{fig:RPP}, then $\wt(R)=x_1^3x_2^6x_4^3x_5$.

Let $x=(x_1,x_2,\dots)$ and $t=(t_1,t_2,\dots)$ be sequences of variables. The
\emph{refined dual stable Grothendieck polynomial} $\gg_{\lm}(x;t)$ is defined
by
\[
  \gg_{\lambda/\mu}(x;t) = \sum_{R\in\RPP(\lm)} \wt(R).
\]

Now we recall the main result, Theorem~\ref{thm:main}: if $\lambda$ and $\mu$
are partitions with $\ell(\lambda')\le n$,
\[
  \widetilde{g}_{\lambda/\mu}(x;t) = \det
  \left( e_{\lambda'_i-\mu'_j-i+j}
    (x_1,x_2,\dots,t_{\mu'_j+1},t_{\mu'_j+2},\dots,t_{\lambda'_i-1})
  \right)_{1\le i,j\le n}.
\]
Note that if $\mu\not\subseteq\lambda$, by definition,
$\gg_{\lambda/\mu}(x;t)=0$. It is easy to see that in this case the above
determinant also vanishes because if $\mu\not\subseteq\lambda$, then
$\lambda'_r<\mu'_r$ for some $1\le r\le n$, which implies that the $(i,j)$ entry
is zero for all $r+1\le i\le n$ and $1\le j\le r$. Moreover, it is also easy to
see that if $\ell(\lambda')=m<n$, then the above determinant is equal to its
principal minor consisting of the first $m$ rows and columns. Therefore it is
sufficient to show Theorem~\ref{thm:main} for the case $\mu\subseteq\lambda$ and
$\ell(\lambda')=n$. From now on we always assume that $\mu\subseteq\lambda$ and
$\ell(\lambda')=n$.

Let $\omega$ be the smallest infinite ordinal number and let
\begin{align*}
  \NN &= \{0,1,2,\dots\},\\
  \NN_\omega &= \{0,1,2,\dots,\omega,\omega+1,\omega+2,\dots,2\omega\},\\
  G &= \NN\times\NN_\omega,
\end{align*}
where the numbers are ordered as usual by
\[
0<1<2<\cdots<\omega<\omega+1<\omega+2<\cdots<2\omega.
\]
We will also write $\omega+i$ as $i^*$.

A \emph{path} from $(a,0)$ to $(b,2\omega)$ is a pair $(s_1,s_2)$ of infinite
sequences $s_j=((u_{0}^{(j)},v_{0}^{(j)}), (u_{1}^{(j)},v_{1}^{(j)}),\dots)$,
$j=1,2$, of points in $G$ satisfying the following conditions:
\begin{itemize}
\item The steps $(u_{i+1}^{(j)},v_{i+1}^{(j)})- (u_{i}^{(j)},v_{i}^{(j)})$, for
  $i\ge0$ and $j=1,2$, consist of \emph{up steps} $(0,1)$ and \emph{diagonal
    steps} $(1,1)$.
\item $(u_0^{(1)},v_0^{(1)})=(a,0)$, $v_0^{(2)}=\omega$ and
  \[
    \lim_{m\to\infty} u_m^{(1)} = u_0^{(2)}, \qquad
    \lim_{m\to\infty} u_m^{(2)} = b.
  \]
\end{itemize}
The \emph{weight} of a path $p$ is defined to be
\[
\wt(p) = \prod_{i\ge 1}x_i^{a_i(p)} t_i^{b_i(t)},
\]
where $a_i(p)$ (resp.~$b_i(p)$) is the set of diagonal steps of $p$ ending at
height $i$ (resp.~$\omega+i$). See Figure~\ref{fig:L(i,j)}.

Suppose that $\lambda$ and $\mu$ are partitions with $\mu\subseteq\lambda$ and
$\ell(\lambda')\le n$. Denote by $\LL(i,j)$ the set of all paths from
$(\mu'_i+n-i,0)$ to $(\lambda'_j+n-j,2\omega)$ in which there is no diagonal
step between the lines $y=\omega$ and $y=\omega+\mu'_i$ and no diagonal step
above the line $y=\omega+\lambda'_i-1$. See Figure~\ref{fig:L(i,j)} for a
typical example of a path in $\LL(i,j)$. It is clear from the construction that
\begin{equation}
  \label{eq:e}
  \sum_{p\in \LL(i,j)} \wt(p)
  =e_{\lambda'_j-\mu'_i-j+i}(x_1,x_2,\dots,t_{\mu'_i+1},t_{\mu'_i+2},\dots, t_{\lambda'_j-1}).
\end{equation}

\begin{figure}
  \centering
\begin{tikzpicture}[x=0.4cm, y=0.4cm]
\RSEgrid{9}66
\node[below] at (2,0){$(2,0)$};
\node[above] at (7,16){$(7,2\omega)$};
\filldraw (2,0) circle [radius=2pt];
\filldraw (7,16) circle [radius=2pt];

  \draw[line width=2pt] (2,0)--
  ++(0,1)--
  node[above]{$2$} ++(1,1)--
  ++(0,2)--
  node[above]{$5$} ++(1,1)--
  ++(0,3)--
  node[above]{$1^*$} ++(1,1)--
  ++(0,2)--
  node[above]{$4^*$} ++(1,1)--
  node[above]{$5^*$} ++(1,1)--
  ++(0,3); 
\end{tikzpicture}\qquad\qquad
\begin{tikzpicture}[x=0.4cm, y=0.4cm]
\filldraw[lightgray] (0,8) rectangle (9,11);
\filldraw[lightgray] (0,13) rectangle (9,16);
\RSEgridonly{9}66
\node[left] at (0,0) {$0$};
\node[left] at (0,8) {$\omega$};
\node[left] at (0,7.3) {$\vdots$};
\node[left] at (0,16) {$2\omega$};
\node[left] at (0,15.3) {$\vdots$};
\node[left] at (0,11) {$\omega+\mu'_i$};
\node[left] at (0,13) {$\omega+\lambda'_i-1$};
\node[below] at (4,0){$(\mu_i'+n-i,0)$};
\node[above] at (7,16){$(\lambda_j'+n-j,2\omega)$};
\filldraw (4,0) circle [radius=2pt];
\filldraw (7,16) circle [radius=2pt];

  \draw[line width=2pt] (4,0)--
  ++(0,1)--
  ++(1,1)--
  ++(0,2)--
  ++(1,1)--
  ++(0,6)--
  ++(1,1)--
  ++(0,4);
\end{tikzpicture}
\caption{The left diagram is a path from $(2,0)$ to $(7,2\omega)$ with weight
  $x_2x_5t_1t_4t_5$. The height of the ending point of each diagonal step is
  shown. The right diagram illustrates a typical path in $\LL(i,j)$, which
  cannot have diagonal steps in the gray areas.}
  \label{fig:L(i,j)}
\end{figure}
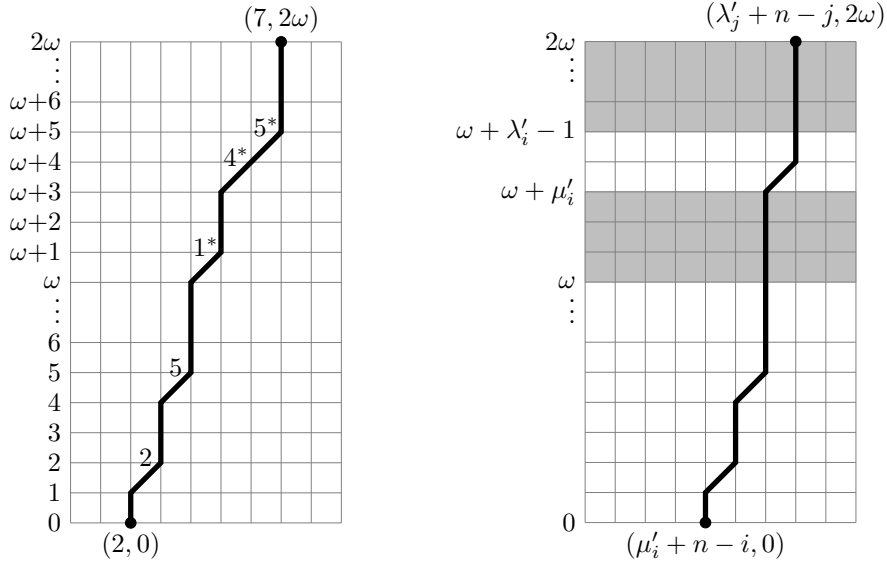

An \emph{$n$-path} is an $n$-tuple of paths. Denote by $\Sym_n$ the set of
permutations on $\{1,2,\dots,n\}$. For a permutation $\pi\in\mathfrak{S}_n$, let
$\LL(\pi)$ denote the set of $n$-paths $\pp= (p_1,\dots,p_n)$ such that $p_i\in
\LL(i,\pi(i))$ for all $1\le i\le n$. Note that $\LL(\pi)$ may be
empty for some $\pi$. Define
\[
\LL=\bigcup_{\pi\in \mathfrak{S}_n}\LL(\pi).
\]

The \emph{type} of an $n$-path $\pp$ in $\LL$, denoted $\type(\pp)$, is the
permutation $\pi$ for which $\pp\in\LL(\pi)$. Note that $\type(\pp)$ is uniquely
determined because the starting points $(\mu'_i+n-i,0)$ and the ending points
$(\lambda'_j+n-j,2\omega)$ are all distinct. The \emph{weight} of $\pp=
(p_1,\dots,p_n)\in \LL$ is defined by
\[
  \wt(\pp)=\sign(\type(\pp))\wt(p_1)\cdots\wt(p_n).
\]

The following notation will be used throughout this paper. See
Figure~\ref{fig:notation} for an illustration.

\begin{notation}\label{notation}
  Fix partitions $\lambda$ and $\mu$ with $\mu\subseteq\lambda$,
  $\ell(\lambda)=\ell$, and $\ell(\lambda')=\lambda_1=n$. Define
  $d_1>d_2>\dots>d_r$ to be the distinct integers in
  $\{\mu_1,\mu_2,\dots,\mu_{\ell(\mu)}\}$ and let $d_0=n$ and $d_{r+1}=0$. For
  $1\le i\le r+1$, 
  \begin{itemize}
  \item $m_i$ denotes the multiplicity of the part $d_i$ in $\mu$, where the
    multiplicity of $d_{r+1}=0$ in $\mu$ is defined to be
    $\ell(\lambda)-\ell(\mu)$,
  \item $M_i=m_1+\dots+m_i$, and 
  \item $D_i=\{d_{i}+1,d_{i}+2,\dots,d_{i-1}\}$.
  \end{itemize}
\end{notation}

\begin{figure}
  \centering
  \begin{tikzpicture}[scale=0.5](0,0)(10,11)
\draw[line width=1pt](0,0) --++(6,0) --++(0,1) --++(2,0)--++(0,1)--++(1,0)--++(0,2)
--++(2,0)--++(0,2)--++(1,0)--++(0,1)--++(2,0)--++(0,2)--++(1,0)--++(0,1)--++(-15,0)--++(0,-10);

\draw[line width=1pt](0,2)--++(3,0)--++(0,2)--++(2,0)--++(0,.5);
\draw[line width=1pt](6,5.5)--++(0,0.5)--++(2,0)--++(0,2)--++(2,0)--++(0,2);
\filldraw (5.3,4.8) circle [radius=1pt];
\filldraw (5.5,5) circle [radius=1pt];
\filldraw (5.7,5.2) circle [radius=1pt];

\draw[dotted] (0,4)--(3,4);
\draw[dotted] (0,6)--(7,6);
\draw[dotted] (0,8)--(8,8);
\draw[dotted] (3,4)--(3,10);
\draw[dotted] (6,6)--(6,10);
\draw[dotted] (8,8)--(8,10);

\draw [decorate,decoration={brace,amplitude=5pt},xshift=3pt,yshift=0pt]
(10,10) -- (10,8) node [midway,xshift=.5cm] {$m_1$}; 
\draw [decorate,decoration={brace,amplitude=5pt},xshift=3pt,yshift=0pt]
(8,8) -- (8,6) node [midway,xshift=.5cm] {$m_2$}; 
\draw [decorate,decoration={brace,amplitude=5pt},xshift=3pt,yshift=0pt]
(3,4) -- (3,2) node [midway,xshift=.5cm] {$m_r$}; 
\draw [decorate,decoration={brace,amplitude=5pt},xshift=3pt,yshift=0pt]
(0,2) -- (0,0) node [midway,xshift=.7cm] {$m_{r+1}$}; 

\draw [decorate,decoration={brace,amplitude=5pt},xshift=-4pt,yshift=0pt]
(0,8) -- (0,10) node [midway,xshift=-0.5cm] {$M_1$}; 
\draw [decorate,decoration={brace,amplitude=5pt},xshift=-4pt,yshift=0pt]
(-1.5,6) -- (-1.5,10) node [midway,xshift=-0.5cm] {$M_2$}; 
\draw [decorate,decoration={brace,amplitude=5pt},xshift=-4pt,yshift=0pt]
(-3,2) -- (-3,10) node [midway,xshift=-0.5cm] {$M_r$}; 
\draw [decorate,decoration={brace,amplitude=5pt},xshift=-4pt,yshift=0pt]
(-4.5,0) -- (-4.5,10) node [midway,xshift=-1cm] {$M_{r+1}=\ell$}; 

\draw [decorate,decoration={brace,amplitude=5pt},yshift=1pt,yshift=0pt]
(0,8) -- (10,8) node [midway,yshift=0.4cm] {$d_{1}$}; 
\draw [decorate,decoration={brace,amplitude=5pt},yshift=1pt,yshift=0pt]
(0,6) -- (8,6) node [midway,yshift=0.4cm] {$d_{2}$}; 
\draw [decorate,decoration={brace,amplitude=5pt},yshift=1pt,yshift=0pt]
(0,2) -- (3,2) node [midway,yshift=0.4cm] {$d_r$}; 

\draw [decorate,decoration={brace,amplitude=5pt},yshift=3pt]
(0,10) -- (3,10) node [midway,yshift=0.4cm] {$D_{r}$}; 
\draw [decorate,decoration={brace,amplitude=5pt},yshift=3pt]
(8,10) -- (10,10) node [midway,yshift=0.4cm] {$D_2$}; 
\draw [decorate,decoration={brace,amplitude=5pt},yshift=3pt]
(10,10) -- (15,10) node [midway,yshift=0.4cm] {$D_1$}; 
\draw [decorate,decoration={brace,amplitude=5pt},yshift=3pt]
(0,11.5) -- (15,11.5) node [midway,yshift=0.4cm] {$d_0=n$}; 
  \end{tikzpicture}
  \caption{An illustration of Notation~\ref{notation} for a given $\lm$. Every
    letter is an integer except $D_i$'s, which are sets of column indices.}
  \label{fig:notation}
\end{figure}
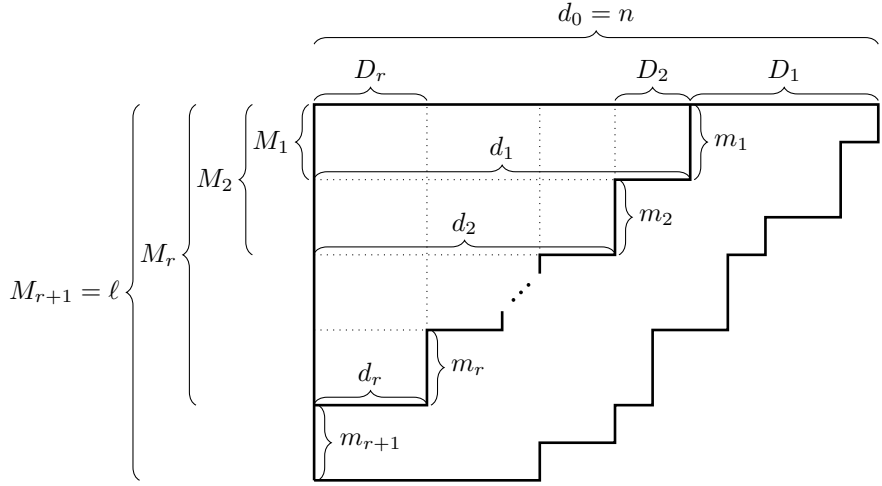

Note that for $1\le i,j\le n$, we have $\mu'_i=\mu'_{j}$ if and only if $i,j\in
D_k$ for some $1\le k\le r$.

Suppose $\pp= (p_1,\dots,p_n)\in \LL$. We say that $\pp$ is \emph{noncrossing}
if $p_i$ and $p_j$ have no common points for all $i\ne j$, and that $\pp$ is
\emph{semi-noncrossing} if $p_i$ and $p_j$ have no common points whenever $i$
and $j$ are distinct elements in $D_k$ for some $1\le k\le r$. Denote by $\lnc$
(resp.~$\lsnc$) the set of noncrossing (resp.~semi-noncrossing) paths in $\LL$.
Note that if $\mu=\emptyset$, then $\lnc=\lsnc$.

By expanding the determinant in Theorem~\ref{thm:main} (more precisely, the
determinant of the transpose of the matrix) using \eqref{eq:e}, we have
\begin{equation}
  \label{eq:1}
\det \left(
    e_{\lambda'_i-\mu'_j-i+j}(x_1,x_2,\dots,t_{\mu'_j+1},t_{\mu'_j+2},\dots,
    t_{\lambda'_i-1}) \right)_{1\le i,j\le n}  =\sum_{\pp\in \LL} \wt(\pp). 
\end{equation}
The standard method of the Lindstr\"om--Gessel--Viennot lemma
\cite{LGV,Lindstrom} interprets a determinant as a weighted sum of noncrossing
$n$-paths via a sign-reversing involution which exchanges ``tails'' of
intersecting paths. Roughly speaking, in order for this to work ``local''
changes of the steps in an $n$-path must be allowed. Such ``local'' changes are
not allowed for an $n$-path $\pp=(p_1,\dots,p_n)$ in $\LL$ because each path
$p_i$ has the ``global'' restriction that there are no diagonal steps between
the lines $y=\omega$ and $y=\omega+\mu'_i$. For example, see
Figure~\ref{fig:forbidden}. 

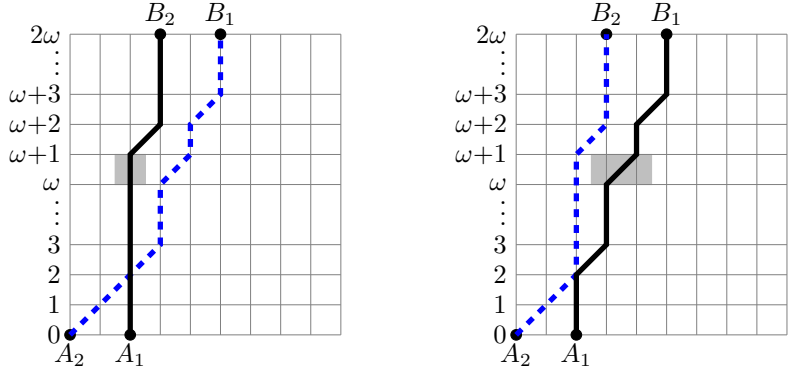
\begin{figure}
  \centering
\begin{tikzpicture}[x=0.4cm, y=0.4cm]
\filldraw[lightgray] (1.5,5) rectangle (2.5,6);
\RSEgrid{9}33
\node[below] at (0,0){$A_2$};
\node[below] at (2,0){$A_1$};
\node[above] at (5,10){$B_1$};
\node[above] at (3,10){$B_2$};
\filldraw (0,0) circle [radius=2pt];
\filldraw (2,0) circle [radius=2pt];
\filldraw (5,10) circle [radius=2pt];
\filldraw (3,10) circle [radius=2pt];

\draw[line width=2pt,blue,dashed] (0,0)-- ++(1,1)-- ++(1,1)-- ++(1,1)--
++(0,2)-- ++(1,1)-- ++(0,1)-- ++(1,1)-- ++(0,2);
\draw[line width=2pt] (2,0)-- ++(0,2)-- ++(0,4)-- ++(1,1)-- ++(0,3);
\end{tikzpicture}\qquad\qquad
\begin{tikzpicture}[x=0.4cm, y=0.4cm]
\filldraw[lightgray] (2.5,5) rectangle (4.5,6);
\RSEgrid{9}33
\node[below] at (0,0){$A_2$};
\node[below] at (2,0){$A_1$};
\node[above] at (5,10){$B_1$};
\node[above] at (3,10){$B_2$};
\filldraw (0,0) circle [radius=2pt];
\filldraw (2,0) circle [radius=2pt];
\filldraw (5,10) circle [radius=2pt];
\filldraw (3,10) circle [radius=2pt];

\draw[line width=2pt,blue,dashed] (0,0)-- ++(1,1)-- ++(1,1)-- 
++(0,4)-- ++(1,1)-- ++(0,3);
\draw[line width=2pt] (2,0)-- ++(0,2)--
++(1,1)--++(0,2)-- ++(1,1)-- ++(0,1)-- ++(1,1)-- ++(0,2);
\end{tikzpicture}
\caption{Let $\lambda=(2,2,2,1)$ and $\mu=(1)$ so that $\lambda'_1=4$,
  $\lambda'_2=3$ and $\mu'_1=1$. The left diagram shows a $2$-path
  $(p_1,p_2)\in\LL$, where $p_i$ is the path from $A_i$ to $B_i$ for $i=1,2$. If
  we switch the tails of $p_1$ and $p_2$ after their unique intersection, we
  obtain the resulting $2$-path $(p'_1,p'_2)$ as shown on the right. The gray
  area shows (part of) the restriction on the path whose starting point is
  $A_1=(\mu_1+2-1,0)=(2,0)$. Since $p'_1$ has a diagonal step in the gray area,
  $(p'_1,p'_2)\not\in \LL$.}
  \label{fig:forbidden}
\end{figure}

However, it is possible to cancel
all $n$-paths except for the semi-noncrossing $n$-paths.

\begin{prop}\label{prop:snc}
  Let $\lambda$ and $\mu$ be partitions with $\mu\subseteq\lambda$,
  $\ell(\lambda)=\ell$, and $\ell(\lambda')=n$. Then
\[
  \det \left(
    e_{\lambda'_i-\mu'_j-i+j}(x_1,x_2,\dots,t_{\mu'_j+1},t_{\mu'_j+2},\dots,
    t_{\lambda'_i-1}) \right)_{1\le i,j\le n} = \sum_{\pp\in \lsnc} \wt(\pp).
\]
\end{prop}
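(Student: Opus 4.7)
The plan is to use equation~\eqref{eq:1}, which rewrites the determinant as $\sum_{\pp \in \LL} \wt(\pp)$, and then to construct a sign-reversing, weight-preserving involution $\Phi$ on $\LL \setminus \lsnc$. Such an involution forces $\sum_{\pp \in \LL \setminus \lsnc} \wt(\pp) = 0$, which combined with \eqref{eq:1} gives the proposition.

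Given $\pp \in \LL \setminus \lsnc$, I would select canonically (i) the smallest $k$ for which some distinct pair $i<j$ in $D_k$ satisfies $p_i \cap p_j \neq \emptyset$, (ii) the lex-smallest such pair $(i,j)$, and (iii) a canonical common point $P$ of $p_i$ and $p_j$ (say, the lowest one, possibly subject to a further condition discussed below). The map $\Phi$ would swap the portions of $p_i$ and $p_j$ above $P$, leaving the other paths unchanged. This changes $\type(\pp)$ by the transposition $(i\,j)$, thereby flipping $\sign(\type(\pp))$; the multiset of all diagonal steps at each height is preserved, so the unsigned weight $\prod_l \wt(p_l)$ is unchanged; and involutivity follows because the triple $(k,(i,j),P)$ can be recovered from $\Phi(\pp)$.

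The reason the pair must lie in a common $D_k$ is the identity $\mu'_i = \mu'_j$ for $i,j \in D_k$ recorded immediately after Notation~\ref{notation}. Because of this, $p_i$ and $p_j$ share the \emph{same} middle-region forbidden band $[\omega+1,\, \omega+\mu'_i]$; after a tail-swap, both the head of $p_i$ and the tail of $p_j$ continue to avoid this band, so $p'_i$ does too, and symmetrically for $p'_j$. This is precisely the constraint that blocks any naive global LGV argument, as illustrated by Figure~\ref{fig:forbidden}.

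The main obstacle is verifying that $\Phi(\pp)$ actually lies in $\LL$, in particular that the swapped paths respect the \emph{upper} restriction ``no diagonal above $\omega+\lambda'_{\pi(l)}-1$.'' This is automatic when $\lambda'_{\pi(i)}=\lambda'_{\pi(j)}$, but when the two upper barriers differ, a careless swap can push a diagonal sitting between $\omega+\lambda'_{\pi(i)}$ and $\omega+\lambda'_{\pi(j)}-1$ into a path whose new endpoint forbids it. Overcoming this requires a judicious canonical choice of $P$---intuitively, low enough that the head of the path with the stricter upper barrier cannot inherit a diagonal beyond its own barrier---together with a combinatorial argument that such a $P$ is always available whenever $p_i$ and $p_j$ share a point. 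This verification is the technical heart of the proof.
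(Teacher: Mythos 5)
Your overall plan is the paper's: rewrite the determinant via \eqref{eq:1} and cancel $\LL\setminus\lsnc$ by swapping tails of an intersecting pair $p_i,p_j$ with $i,j\in D_k$, the point being that $\mu'_i=\mu'_j$ makes the middle forbidden band common to both paths (the paper takes the smallest such $k$, the lexicographically smallest intersecting pair in $D_k$, and swaps at the \emph{last} intersection). The genuine gap is that you stop exactly at the step you yourself call the technical heart: you never verify that the swapped paths satisfy the upper restriction (no diagonal step above $y=\omega+\lambda'_{\pi(j)}-1$ for the path that now ends at the $\pi(j)$-endpoint), nor that a ``judicious'' swap point $P$ with the property you want always exists; you only assert that some further, unspecified condition on $P$ should make this work. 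In addition, your involutivity claim (``the triple $(k,(i,j),P)$ can be recovered from $\Phi(\pp)$'') is asserted rather than proved: a tail swap leaves the common points of the chosen pair unchanged, but it can create a new intersecting pair inside $D_k$ that is lexicographically smaller than $(i,j)$, so recovering the pair under a ``lowest point'' rule needs an argument.

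In fact the obstacle you single out is vacuous, so no delicate choice of $P$ is needed: a tail swap at \emph{any} common point $(a,b)$ of $p_i$ and $p_j$ automatically respects the upper restrictions. Indeed, write $X_i=\lambda'_{\pi(i)}+n-\pi(i)$ and $X_j=\lambda'_{\pi(j)}+n-\pi(j)$ for the terminal abscissas, and suppose the head of $p_i$ up to $(a,b)$ had a diagonal step ending at a height $h\ge\omega+\lambda'_{\pi(j)}$ (so $h\le b$). Since every diagonal step of $p_j$ ends at height at most $\omega+\lambda'_{\pi(j)}-1<b$, at height $b$ the path $p_j$ has already reached $X_j$, so $a=X_j$; hence $p_i$ is at abscissa $X_j$ at height $b$ and, since the endpoints are distinct and abscissas weakly increase, it must still take $X_i-X_j\ge 1$ diagonal steps, all ending at distinct heights in the interval from $b+1$ to $\omega+\lambda'_{\pi(i)}-1$. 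This forces $X_i-X_j\le\lambda'_{\pi(i)}-\lambda'_{\pi(j)}-1$, i.e.\ $\pi(i)>\pi(j)$, whence $\lambda'_{\pi(i)}\le\lambda'_{\pi(j)}$ and $X_i-X_j\le-1$, a contradiction; the symmetric computation rules out the failure for the other new path. This is why the paper can swap at the last intersection and only comment on the band between $\omega$ and $\omega+\mu'_i$. Supplying this check (or an equivalent one) and a careful recoverability argument for your canonical datum is what your proposal still owes before it constitutes a proof.
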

\begin{proof}
  By \eqref{eq:1} it is sufficient to show that
\begin{equation}
  \label{eq:13}
   \sum_{\pp\in \LL} \wt(\pp) = \sum_{\pp\in \lsnc} \wt(\pp).  
\end{equation}
We will cancel all paths in $\LL\setminus \lsnc$ using the standard method of
switching tails of two paths. More precisely, suppose
$\pp=(p_1,\dots,p_n)\in\LL\setminus \lsnc$. Then we can find the smallest
integer $k$ such that $p_i$ and $p_j$ have common points for some $i\ne j$ in
$D_k$. Choose such $i$ and $j$ so that $(i,j)$ is the smallest in the
lexicographic order. Let $(a,b)$ be the last intersection of $p_i$ and $p_j$.
Let $p'_i$ and $p'_j$ be the paths obtained from $p_i$ and $p_j$ respectively by
exchanging the subpaths after $(a,b)$. If $\type(\pp)=\pi$, then $p_i\in
\LL(i,\pi(i))$ and $p_j\in \LL(j,\pi(j))$. Since $i,j\in D_k$, we have
$\mu'_i=\mu'_j$. Therefore neither $p_i$ nor $p_j$ has diagonal steps between
heights $\omega$ and $\omega+\mu'_i=\omega+\mu'_j$, which ensures that $p'_i\in
\LL(i,\pi(j))$ and $p'_j\in \LL(j,\pi(i))$. Let $\pp'$ be the $n$-path obtained
from $\pp$ by replacing $p_i$ and $p_j$ by $p'_i$ and $p'_j$ respectively. Then
$\pp\in \LL\setminus\lsnc$ and $\type(\pp')=\pi(i,j)$, where $(i,j)$ is the
transposition. Therefore $\wt(\pp)=-\wt(\pp')$. It is easily seen that this
argument shows that $\sum_{\pp\in \LL\setminus\lsnc} \wt(\pp)=0$, hence
\eqref{eq:13}.
\end{proof}

\section{Vertical tableaux and $n$-paths}
\label{sec:vertical-tableaux}

In this section we introduce a notion of vertical tableaux and give a simple
bijection between them and certain $n$-paths.

A \emph{composition} is a sequence $\alpha=(\alpha_1,\alpha_2,\dots,\alpha_n)$
of nonnegative integers. The \emph{vertical diagram} of a composition $\alpha$
is defined by
\[
V(\alpha) = \{(i,j)\in\ZZ^2: 1\le j\le n, 1\le i\le \alpha_j\}.
\]
Similarly to Young diagrams each element $(i,j)$ in the vertical diagram is
represented by a cell in row $i$ and column $j$. Since $\lambda=V(\lambda')$ as
subsets of $\ZZ^2$, we will also consider the Young diagram of $\lambda$ as a
vertical diagram. The notation used for Young diagrams is naturally extended to
vertical diagrams. For example, for a vertical diagram $V$, define $\col_{\ge
  k}(V)=\{(i,j)\in V: j\ge k\}$, and for two vertical diagrams $V_1$ and $V_2$
with $V_1\subseteq V_2$, define $V_2/V_1$ to be the set-theoretic difference
$V_2-V_1$. We say that $V_1$ and $V_2$ are the \emph{inner shape} and the
\emph{outer shape} of $V_2/V_1$, respectively. See Figures~\ref{fig:VD} and
\ref{fig:VD2}.

\begin{figure}
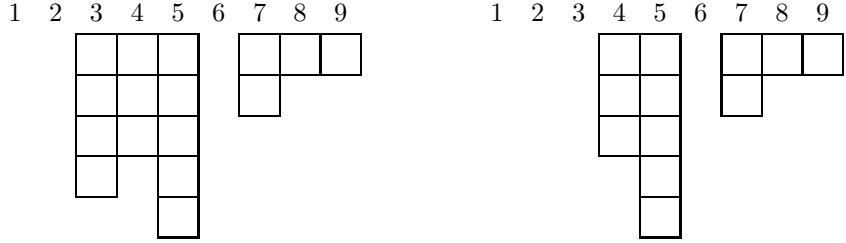

  \centering
   \begin{ytableau}
     \none[1] & \none[2] & \none[3] & \none[4] & \none[5] & \none[6]& \none[7]& \none[8]& \none[9]\\
     \none&\none&&&&\none&&&\\
     \none&\none&&&&\none&&\none&\none\\
     \none&\none&&&&\none&\none&\none&\none\\
     \none&\none&&\none&&\none&\none&\none&\none\\
     \none&\none&\none&\none&&\none&\none&\none&\none\\
   \end{ytableau} \qquad\qquad
   \begin{ytableau}
     \none[1] & \none[2] & \none[3] & \none[4] & \none[5] & \none[6]& \none[7]& \none[8]& \none[9]\\
     \none&\none&\none&&&\none&&&\\
     \none&\none&\none&&&\none&&\none&\none\\
     \none&\none&\none&&&\none&\none&\none&\none\\
     \none&\none&\none&\none&&\none&\none&\none&\none\\
     \none&\none&\none&\none&&\none&\none&\none&\none\\
   \end{ytableau} 
   \caption{The vertical diagram $V(\alpha)$ on the left and the vertical
     diagram $\col_{\ge4}(V(\alpha))$ on the right for the composition
     $\alpha=(0,0,4,3,5,0,2,1,1)$. For visibility the column indices are written
     above the diagrams.}
  \label{fig:VD}
\end{figure}

\begin{figure}
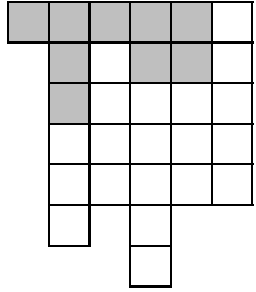

  \centering
   \begin{ytableau}
      \muentry{} &  \muentry{} & \muentry{} & \muentry{} & \muentry{} & \\
      \none &  \muentry{} & & \muentry{} & \muentry{} & \\
      \none &  \muentry{} & & & & \\
      \none & & & & & \\
      \none & & & & & \\
      \none & & \none & & \none & \none\\
      \none & \none & \none & & \none & \none\\
   \end{ytableau} 
   \caption{The diagram $V(\beta)/V(\alpha)$ for $\alpha=(1,3,1,2,2,0)$ and
     $\beta=(1,6,5,7,5,5)$ is shown with the white cells.}
  \label{fig:VD2}
\end{figure}

 \begin{figure}
   \centering
\begin{tikzpicture}[x=0.4cm, y=0.4cm]
\RSEgrid{12}54
\node[below] at (0,0){$A_6$};
\node[below] at (3,0){$A_5$};
\node[below] at (4,0){$A_4$};
\node[below] at (4,-1){$A_3$};
\node[below] at (7,0){$A_2$};
\node[below] at (6,0){$A_1$};
\node[above] at (5,13){$B_6$};
\node[above] at (6,13){$B_5$};
\node[above] at (6,14){$B_1$};
\node[above] at (8,13){$B_3$};
\node[above] at (9,13){$B_4$};
\node[above] at (10,13){$B_2$};
\filldraw (0,0) circle [radius=2pt];
\filldraw (3,0) circle [radius=2pt];
\filldraw (4,0) circle [radius=2pt];
\filldraw (6,0) circle [radius=2pt];
\filldraw (7,0) circle [radius=2pt];
\filldraw (5,13) circle [radius=2pt];
\filldraw (6,13) circle [radius=2pt];
\filldraw (8,13) circle [radius=2pt];
\filldraw (9,13) circle [radius=2pt];
\filldraw (10,13) circle [radius=2pt];

  \draw[line width=2pt] (0,0)--
  ++(0,2)--
  node[above]{$3$} ++(1,1)--
  node[above]{$4$} ++(1,1)--
  ++(0,3)--
  node[above]{$1^*$} ++(1,1)--
  node[above]{$2^*$} ++(1,1)--
  ++(0,1)--
  node[above]{$4^*$} ++(1,1)--
  ++(0,2);
  
  \draw[line width=2pt] (3,0)--
  ++(0,1)--
  node[above]{$2$} ++(1,1)--
  ++(0,2)--
  node[above]{$4$} ++(1,1)--
  ++(0,2)--
  node[above]{$1^*$} ++(1,1)--
  ++(0,5);
  
  \draw[line width=2pt] (4,0)--
  node[above]{$1$} ++(1,1)--
  ++(0,1)--
  node[above]{$3$} ++(1,1)--
  node[above]{$4$} ++(1,1)--
  ++(0,4)--
  node[above]{$2^*$} ++(1,1)--
  node[above]{$3^*$} ++(1,1)--
  ++(0,3);

  \draw[line width=2pt,blue,dashed] (4,0)--
  node[below]{$1$} ++(1,1)--
  node[below]{$2$} ++(1,1)--
  ++(0,1)--
  node[below]{$4$} ++(1,1)--
  node[below]{$5$} ++(1,1)--
  ++(0,8);

  \draw[line width=2pt,red,dashed] (6,0)--
  ++(0,13);

  \draw[line width=2pt,gray,dashed] (7,0)--
  node[below]{$1$} ++(1,1)--
  ++(0,8)--
  node[below]{$3^*$} ++(1,1)--
  node[below]{$4^*$} ++(1,1)--
  ++(0,2);

\end{tikzpicture}\qquad\qquad
   \begin{ytableau}
      \muentry{} &  \muentry{} & \muentry{} & \muentry{} & \muentry{} &3 \\
      \none &  \muentry{} & 1& \muentry{} & \muentry{} & 4\\
      \none &  \muentry{} &2 &1 &2 &1^* \\
      \none &1 &4 &3 &4 &2^* \\
      \none &3^* &5 &4 &1^* &4^* \\
      \none &4^* & \none & 2^*& \none & \none\\
      \none & \none & \none &3^* & \none & \none\\
      \none\\
      \none\\
   \end{ytableau} 
   \caption{On the left is a $6$-path $\pp=(p_1,\dots,p_6)\in\L(\alpha,\beta)$
     for $\alpha=(1,3,1,2,2,0)$ and $\beta=(1,6,5,7,5,5)$. Each $p_i$ is a path
     from $A_i=(\alpha_i+6-i,0)$ to $B_i=(\beta_i+6-i,2\omega)$. Its corresponding
     vertical tableau $\Tab(\pp)$ is shown on the right.}
   \label{fig:VT}
 \end{figure}
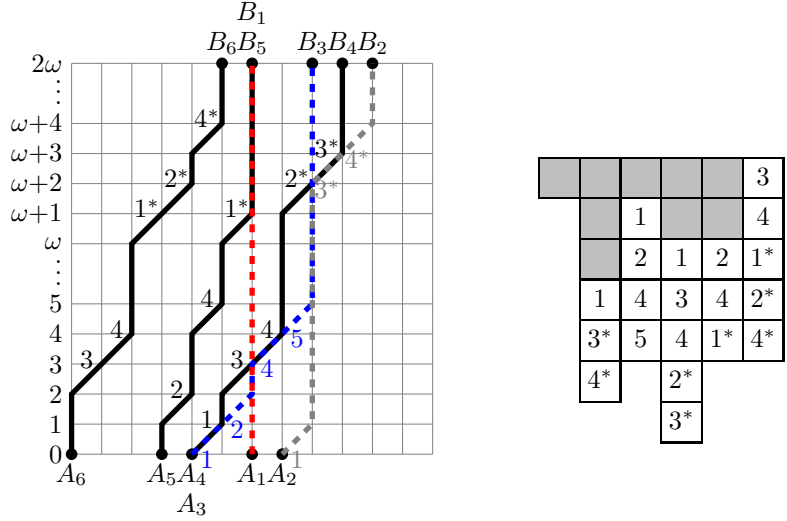

 For vertical diagrams $V_1$ and $V_2$ with $V_1\subseteq V_2$, a \emph{vertical
   tableau} of shape $V_2/V_1$ is a filling of $V_2/V_1$ with numbers in
 $\{1<2<\cdots<1^*<2^*<\cdots\}$ such that the entries are strictly increasing
 in each column. See the right diagram in Figure~\ref{fig:VT} for an example of
 a vertical tableau. Let $\VT(V_2/V_1)$ denote the set of vertical tableaux of
 shape $V_2/V_1$.

\begin{defn}\label{def:Tab}[The map $\Tab$ sending $n$-paths to vertical
  tableaux]
  Let $\alpha=(\alpha_1,\dots,\alpha_n)$ and $\beta=(\beta_1,\dots,\beta_n)$ be
  compositions with $V(\alpha)\subseteq V(\beta)$. Define $\L(\alpha,\beta)$ to
  be the set of $n$-paths $\pp=(p_1,\dots,p_n)$, where $p_i$ is a path from
  $(\alpha_i+n-i,0)$ to $(\beta_i+n-i,2\omega)$.

  For $\pp=(p_1,\dots,p_n)\in\L(\alpha,\beta)$, define $\Tab(\pp)$ to be the
  vertical tableau $T\in \VT(V(\beta)/V(\alpha))$ constructed as follows. For
  each diagonal step of $p_i$, if its ending point is $(a,b)$, fill the
  $(a-n+i-1,i)$-entry of $T$ with $b$.
\end{defn}

See Figure~\ref{fig:VT} for an example of the map $\Tab$ in
Definition~\ref{def:Tab}. The following proposition is straightforward to
verify.

\begin{prop}\label{prop:Tab}
  Following the notation in Definition \ref{def:Tab}, the map $\Tab$ is a
  bijection from $\L(\alpha,\beta)$ to $\VT(V(\beta)/V(\alpha))$. Moreover,
  if $\Tab(\pp)=T$, then for every positive integer $h$ the total number of
  diagonal steps in $\pp$ ending at height $h$ (resp.~$\omega+h$) is equal to
  the number of times $h$ (resp.~$h^*$) appears in $T$.
\end{prop}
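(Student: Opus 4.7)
The plan is to verify directly that $\Tab$ is well-defined, invertible, and weight-preserving in the stated sense. There is no deep idea involved; everything reduces to a careful matching between the diagonal steps of each $p_i$ and the cells of the $i$-th column of the skew vertical diagram.

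First I would check that $\Tab(\pp) \in \VT(V(\beta)/V(\alpha))$. Fix $i$. Because up steps preserve the $x$-coordinate while diagonal steps increase it by one, the path $p_i$ from $(\alpha_i+n-i,0)$ to $(\beta_i+n-i,2\omega)$ has exactly $\beta_i-\alpha_i$ diagonal steps, and their endpoints have $x$-coordinates $\alpha_i+n-i+1, \dots, \beta_i+n-i$, each occurring exactly once. The rule sending the endpoint $(a,b)$ to the cell of column $i$ whose row index is determined by $a$ (via the formula in Definition~\ref{def:Tab}) therefore fills each cell of column $i$ of $V(\beta)/V(\alpha)$ with exactly one entry. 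For strict increase down the column, note that if two consecutive diagonal steps of $p_i$ end at heights $b$ and $b'$, then $b' \ge b+1$ in the ordering of $\NN_\omega$: the diagonal step itself contributes one unit of height, and any intervening up steps only enlarge the gap.

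Next I would construct $\Tab^{-1}$ explicitly and deduce the multiplicity statement. Given $T \in \VT(V(\beta)/V(\alpha))$, for each column $i$ I read the entries of $T$ in that column from its top cell to its bottom cell, obtaining a strictly increasing sequence $b_1 < b_2 < \cdots < b_{\beta_i-\alpha_i}$ in $\NN_\omega$. There is a unique path $p_i$ from $(\alpha_i+n-i,0)$ to $(\beta_i+n-i,2\omega)$ whose $j$-th diagonal step ends at height $b_j$: the $x$-coordinate of that endpoint is forced to be $\alpha_i+n-i+j$, and the surrounding up steps are then determined by the $b_j$'s (they simply fill the vertical gaps $b_j-b_{j-1}-1$, with the endpoints $0$ and $2\omega$ playing the roles of $b_0$ and $b_{\beta_i-\alpha_i+1}$). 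Setting $\pp=(p_1,\dots,p_n)$ one sees that the two maps are mutual inverses, and the claim about multiplicities of $h$ and $h^*$ is immediate from the construction, since $\Tab$ literally writes the $y$-coordinate of each diagonal-step endpoint into a distinct cell of $T$. The only step requiring any thought is the index calculation in the previous paragraph showing that the diagonal-step endpoints of $p_i$ are in bijection with the cells of column $i$ of $V(\beta)/V(\alpha)$; everything else is a formal unwinding of definitions.
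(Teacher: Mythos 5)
Your proof is correct and coincides with what the paper intends: Proposition~\ref{prop:Tab} is stated there without an argument (``straightforward to verify''), and your column-by-column matching of the $\beta_i-\alpha_i$ diagonal steps of $p_i$ with the cells of column $i$, the monotonicity observation, and the explicit inverse reconstructing each path from the strictly increasing column entries are exactly that verification. Nothing is missing, so there is nothing further to compare.
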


For a partition $\lambda$ with $\ell(\lambda')=n$ and a permutation
$\pi\in\Sym_n$, we define $\pi(\lambda)$ to be the vertical diagram given by 
\[
\pi(\lambda) = \{(i,j)\in\ZZ^2: 1\le j\le n, 1\le i\le \lambda'_{\pi_j}-\pi_j+j\}.
\]
Note that if $\pi$ is the identity permutation then $\pi(\lambda)$ is the Young
diagram of $\lambda$. One may worry about the situation that
$\lambda'_{\pi_j}-\pi_j+j<0$ in the definition of $\pi(\lambda)$. Since we will
only consider $\pi(\lambda)$ when $\VT(\pi(\lambda)/\mu)$ is nonempty (or
equivalently, when $\mu\subseteq\pi(\lambda)$) this will never occur, see the
paragraph after the proof of Lemma~\ref{lem:VT}.

The following lemma shows that the type of $\pp\in\LL$ is encoded in the outer
shape of the vertical tableau $\Tab(\pp)$ while the inner shape of $\Tab(\pp)$
is always $\mu$. See Figure~\ref{fig:ppT} for an example.

\begin{lem}\label{lem:VT}
  For $\pp\in\LL$, we have $\type(\pp)=\pi$ if and only if
  $\Tab(\pp)\in\VT(\pi(\lambda)/\mu)$.
\end{lem}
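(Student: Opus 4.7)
The plan is to apply Proposition~\ref{prop:Tab} after identifying the compositions $\alpha,\beta$ of Definition~\ref{def:Tab} that correspond to a type-$\pi$ $n$-path in $\LL$. The core observation is that the lemma is really a statement about the endpoints of the paths: the diagonal-step restrictions built into $\LL(i,j)$ play no role here, since Proposition~\ref{prop:Tab} already delivers a bijection on the unrestricted path set $\L(\alpha,\beta)$.

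For the forward direction I would take $\pp=(p_1,\dots,p_n)\in\LL(\pi)$. By definition each $p_i\in\LL(i,\pi(i))$ starts at $(\mu'_i+n-i,0)$ and ends at $(\lambda'_{\pi(i)}+n-\pi(i),2\omega)$, so setting $\alpha_i=\mu'_i$ and $\beta_i=\lambda'_{\pi(i)}-\pi(i)+i$ places $\pp$ in $\L(\alpha,\beta)$. Unwinding the definitions gives $V(\alpha)=\mu$ and, directly from the definition of $\pi(\lambda)$ in the excerpt, $V(\beta)=\pi(\lambda)$. Proposition~\ref{prop:Tab} then yields $\Tab(\pp)\in\VT(\pi(\lambda)/\mu)$.

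For the converse, suppose $\pp\in\LL$ satisfies $\Tab(\pp)\in\VT(\pi(\lambda)/\mu)$, and let $\sigma=\type(\pp)$, which is uniquely determined because the endpoints $(\lambda'_j+n-j,2\omega)$ are distinct. Applying the forward direction to $\sigma$ gives $\Tab(\pp)\in\VT(\sigma(\lambda)/\mu)$, whence $\sigma(\lambda)=\pi(\lambda)$ as vertical diagrams. Since the $i$-th column height of $\tau(\lambda)$ is $\lambda'_{\tau(i)}-\tau(i)+i$, this equality of diagrams forces $\lambda'_{\sigma(i)}-\sigma(i)=\lambda'_{\pi(i)}-\pi(i)$ for every $i$, and distinctness of the values $\lambda'_j-j$ (the same fact that makes $\type(\pp)$ well-defined) then forces $\sigma=\pi$.

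I do not anticipate a serious obstacle: the only point requiring care is the identification $V(\beta)=\pi(\lambda)$, which is a routine unwinding of notation. The rest is a direct appeal to Proposition~\ref{prop:Tab} together with the injectivity of the assignment $\tau\mapsto\tau(\lambda)$ on $\Sym_n$ that follows from distinctness of the endpoints.
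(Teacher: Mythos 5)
Your proposal is correct and follows essentially the same route as the paper: identify $\alpha_i=\mu'_i$, $\beta_i=\lambda'_{\pi(i)}-\pi(i)+i$, apply Proposition~\ref{prop:Tab} to get the forward direction, and deduce the converse from $\sigma(\lambda)=\pi(\lambda)$ together with the distinctness of the values $\lambda'_j-j$. No gaps to report.
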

\begin{proof}
  Suppose that $\pp=(p_1,\dots,p_n)\in\LL$ has $\type(\pp)=\pi$. Let $\alpha$
  and $\beta$ be the compositions given by $\alpha_i=\mu'_i$ and
  $\beta_i=\lambda'_{\pi_i}-\pi_i+i$. Then
  $V(\beta)/V(\alpha)=\pi(\lambda)/\mu$. Since $p_i$ is a path from
  $(\mu'_i+n-i,0)=(\alpha_i+n-i,0)$ to
  $(\lambda'_{\pi_i}+n-\pi_i,2\omega)=(\beta_i+n-i,2\omega)$, we have
  $\pp\in\Tab(\beta/\alpha)$. By Proposition~\ref{prop:Tab},
  $\Tab(\pp)\in\VT(V(\beta)/V(\alpha))=\VT(\pi(\lambda)/\mu)$.

  Conversely, suppose that $\pp\in\LL$ satisfies
  $\Tab(\pp)\in\VT(\pi(\lambda)/\mu)$. Let $\type(\pp)=\sigma$. Then by what we
  just proved, we obtain $\Tab(\pp)\in\VT(\sigma(\lambda)/\mu)$, which implies
  $\sigma(\lambda)=\pi(\lambda)$, or equivalently,
  \[
    (\lambda'_{\pi_1}-\pi_1+1,\dots,\lambda'_{\pi_n}-\pi_n+n)
    =(\lambda'_{\sigma_1}-\sigma_1+1,\dots, \lambda'_{\sigma_n}-\sigma_n+n). 
\]
By subtracting $i$ from the $i$th component we also have
  \[
    (\lambda'_{\pi_1}-\pi_1,\dots,\lambda'_{\pi_n}-\pi_n)
    =(\lambda'_{\sigma_1}-\sigma_1,\dots, \lambda'_{\sigma_n}-\sigma_n).
\]
Both sequences in the above equation are rearrangements of
$(\lambda'_1-1,\dots,\lambda'_n-n)$, which is a strictly decreasing sequence.
Since there are no repeated entries in this sequence, the rearrangements must be
identical and we obtain $\pi=\sigma$. Hence $\type(\pp)=\pi$ and the proof is
completed.
\end{proof}

Note that in the proof of the above lemma if $\type(\pp)=\pi$, we must have
$\lambda'_{\pi_i}+n-\pi_i\ge \mu'_i+n-i$, or, equivalently, $\lambda'_{\pi_i}-\pi_i+i\ge
\mu'_i\ge0$. Hence in this case we always have $\lambda'_{\pi_j}-\pi_j+j\ge0$ in
the definition of $\pi(\lambda)$.

\section{RSE-tableaux and bijections of Lam and Pylyavskyy}
\label{sec:rse-tableaux}

In this section we define RSE-tableaux and two maps $\pu$ and $\pd$ on these
objects. The notion of RSE-tableaux was introduced implicitly by Lam and
Pylyavskyy \cite[Proof of Theorem~9.8]{LP2007} in their bijection between RPPs
and pairs of SSYTs and elegant tableaux. The maps $\pu$ and $\pd$ described in
this section are intermediate steps in their bijection. We assume reader's
familiarity with the RSK algorithm and its basic properties. See
\cite[Section~1.1]{Fulton1997} or \cite[Section~7.11]{EC2} for a standard
reference. In particular, we will use Row Bumping Lemma and Proposition in
\cite[Section~1.1]{Fulton1997}.

\begin{defn}
An \emph{RSE-tableau of shape $\lambda$ of level $k$} is a pair $T=(R,E)$ satisfying
the following conditions:
\begin{itemize}
\item $R\in\RPP(\nu)$ with row $k$ of $R$ marked, for some partition
  $\nu\subseteq\lambda$ satisfying $\row_{\le k}(\nu)=\row_{\le k}(\lambda)$,
\item $\row_{\ge k}(R)$ is an SSYT, and
\item $E$ is an elegant tableau of shape $\lambda/\nu$
  such that every entry in $E$ is at least $k$.
\end{itemize}
The set of RSE-tableaux of shape $\lambda$ and level $k$ is denoted by
$\RSE_k(\lambda)$.
\end{defn}

We will represent an RSE-tableau $T=(R,E)$ as the tableau obtained by drawing
both $R$ and $E$ in which every entry $i$ in $E$ is written as $i^*$. See
Figure~\ref{fig:RSE} for an example of an RSE-tableau. 

By definition, if $T=(R,E)$ is an RSE-tableau of level $k$, then $\row_{\le
  k}(R)$, $\row_{\ge k}(R)$, and $E$ are, respectively, an RPP, an SSYT, and an
elegant tableau. The name ``RSE'' stands for the initials of these three
tableaux.

\begin{figure}
  \centering
  \begin{ytableau}
    1&1&1&2&3\\
    1&1&2&3&3\\
    1&2&2&3&4&\none[\star]\\
    2&3\\
    3&4\\
    \none\\
  \end{ytableau}\qquad\qquad
  \begin{ytableau}
    \muentry{}&\muentry{}&\muentry{}&\muentry{}&\muentry{}\\
    \muentry{}&\muentry{}&\muentry{}&\muentry{}&\muentry{}\\
    \muentry{}&\muentry{}&\muentry{}&\muentry{}&\muentry{}\\
    \muentry{}&\muentry{}&3^*&3^*\\
    \muentry{}&\muentry{}&4^*&4^*\\
    3^*&5^*\\
  \end{ytableau}\qquad\qquad
  \begin{ytableau}
    1&1&1&2&3\\
    1&1&2&3&3\\
    1&2&2&3&4&\none[\star]\\
    2&3&3^*&3^*\\
    3&4&4^*&4^*\\
    3^*&5^*\\
  \end{ytableau}
  \caption{An RPP $R$ of shape $\nu$ on the left with row $3$ marked, an elegant
    tableau $E$ of shape $\lambda/\nu$ in the middle, and the RSE-tableau
    $T=(R,E)\in\RSE_3(\lambda)$ of level $3$ and shape $\lambda$ on the
    right, where $\lambda=(5,5,5,4,4,2)$ and $\nu=(5,5,5,2,2)$. Note that
    $\row_{\ge3}(R)$ is an SSYT.}
  \label{fig:RSE}
\end{figure}

Note that if $T=(R,E)\in \RSE_1(\lambda)$, then both $R$ and $E$ are SSYTs. Thus
$T$ can be considered as an SSYT whose entries are from
$\{1,2,\dots,1^*,2^*,\dots\}$. Using this observation the following proposition
is easy to verify.

\begin{prop}\label{prop:nc}
  The map $\Tab$ is a weight-preserving bijection between
  $\L_{\lambda/\emptyset}^{\mathrm{NC}}$ and $\RSE_1(\lambda)$.
\end{prop}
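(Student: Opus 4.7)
The plan is to restrict the weight-preserving bijection $\Tab$ of Proposition~\ref{prop:Tab} to noncrossing $n$-paths and then identify its image with $\RSE_1(\lambda)$. First I would observe that any $\pp\in\L_{\lambda/\emptyset}^{\mathrm{NC}}$ has $\type(\pp)=\mathrm{id}$: the starting points $(n-i,0)$ of $p_i$ are strictly decreasing in $i$, so noncrossing forces the ending abscissae $\lambda'_{\pi_i}+n-\pi_i$ also to be strictly decreasing; since $k\mapsto \lambda'_k+n-k$ is itself strictly decreasing, the only admissible $\pi$ is the identity. By Lemma~\ref{lem:VT} this places $\Tab(\pp)$ in $\VT(\lambda)$, and Proposition~\ref{prop:Tab} already yields that the restricted $\Tab$ is a weight-preserving injection onto its image; the real content lies in identifying this image with $\RSE_1(\lambda)$.

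The second step translates the noncrossing condition into a local row condition on $\Tab(\pp)$. Letting $d_i^{(h)}$ denote the number of diagonal steps of $p_i$ ending at height at most $h$, one checks that $p_i$ and $p_{i+1}$ are noncrossing iff $d_i^{(h)}\ge d_{i+1}^{(h)}$ for all $h$, and a short counting argument (the $k$-th smallest value in column $i$ of $\Tab(\pp)$ is at most the $k$-th smallest in column $i+1$) shows this is equivalent to the rows of $\Tab(\pp)$ being weakly increasing. In particular the non-starred cells of $\Tab(\pp)$ form a Young subdiagram $\nu\subseteq\lambda$ and the restriction to $\nu$ is an SSYT.

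The third step matches the global path constraint ``$p_i$ has no diagonal step above $y=\omega+\lambda'_i-1$'' with the remaining defining conditions of $\RSE_1(\lambda)$. Since column $i$ of $\Tab(\pp)$ has $\lambda'_i$ strictly increasing entries whose top entry is therefore at most $(\lambda'_i-1)^*$, a downward induction forces any starred entry in row $k$ to be at most $(k-1)^*$, which is exactly the elegant tableau condition. The same constraint also prevents the first diagonal of $p_i$ from lying above $\omega$ (otherwise $\lambda'_i$ strictly increasing heights would have to be squeezed into the $\lambda'_i-1$ levels $\omega+1,\dots,\omega+\lambda'_i-1$), so the first row of $\Tab(\pp)$ is entirely unstarred and $\nu_1=\lambda_1$; this yields $(R,E)=\Tab(\pp)\in\RSE_1(\lambda)$.

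Conversely, given $T=(R,E)\in\RSE_1(\lambda)$ viewed as a vertical tableau, Proposition~\ref{prop:Tab} produces an $n$-path $\pp$ with $\Tab(\pp)=T$; its weakly increasing rows force $\pp$ to be noncrossing and its elegant bound guarantees the global height restriction, placing $\pp$ in $\L_{\lambda/\emptyset}^{\mathrm{NC}}$. Weight preservation is then immediate from Proposition~\ref{prop:Tab} together with $\sign(\mathrm{id})=1$. The subtle step is Step~3, where a single boundary constraint on each path must be seen to encode the \emph{pointwise} elegant bound $E(k,i)\le k-1$; the key observation is that strictness of columns lets the top-cell bound propagate downward through every row of the starred region.
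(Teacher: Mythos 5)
Your proposal is correct and fleshes out exactly the verification the paper leaves implicit: the paper's proof consists of the single observation that a level-$1$ RSE-tableau is an SSYT over the alphabet $\{1<2<\cdots<1^*<2^*<\cdots\}$, declaring the rest ``easy to verify,'' and your steps (type must be the identity, noncrossing is equivalent to weakly increasing rows, and the height restriction on each path corresponds to the elegant bound and to the first row being unstarred) are precisely that verification. So this is essentially the same approach, carried out in more detail than the paper records.
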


If $T=(R,E)\in \RSE_\ell(\lambda)$, then
$E=\emptyset$ and $R$ is an RPP of shape $\lambda$ with no extra conditions.
Hence, we will identify $\RSE_\ell(\lambda)$ with $\RPP(\lambda)$.

The \emph{weight} of $T=(R,E)\in\RSE_{k}(\lambda)$ is defined by
\[
\wt(T) = \wt(R)t_E,
\]
where $t_E=t_1^{c_1(E)}t_2^{c_2(E)}\cdots$ and $c_i(E)$ is the number of $i$'s
in $E$. For example, if $T=(R,E)$ is the RSE-tableau in Figure~\ref{fig:RSE},
then $\wt(R)=x_1^{3}x_2^4x_3^4x_4^2t_1^3t_2^3$, $t_E=t_3^3t_4^2t_5$, and
$\wt(T)=\wt(R)t_E=x_1^{3}x_2^4x_3^4x_4^2t_1^3t_2^3t_3^3t_4^2t_5$.

We now describe two maps $\pu$ and $\pd$ on RSE-tableaux, where the level of an
RSE-tableau is decreased by $\pu$ and increased by $\pd$. These maps are due to
Lam and Pylyavskyy \cite{LP2007} who used them as intermediate steps in their
bijection between $\RPP(\lambda)$ and $\RSE_1(\lambda)$. See
Figures~\ref{fig:pu} and \ref{fig:pd} for illustrations of these maps.

\begin{defn}\label{def:pu}
[The level-decreasing map $\pu:\RSE_{k+1}(\lambda)\to
  \RSE_{k}(\lambda)$] 
  Let $\lambda$ be a partition with $\ell(\lambda)=\ell$ and let $T=(R,E)\in
  \RSE_{k+1}(\lambda)$ with $1\le k\le \ell-1$. Then $\pu(T)$ is defined as
  follows.
\begin{description}
\item[Step 1] For $1\le j\le \lambda_{k}$, the entry $R(k,j)$ is \emph{novel} if
  $j>\lambda_{k+1}$ or $R(k,j)\ne R(k+1,j)$. Let $a_1\le a_2\le \dots\le a_r$ be
  the novel entries. Let $R'$ be the tableau obtained from $R$ by removing row
  $k$ and shifting $\row_{\ge k+1}(R)$ up by one (so that $\row_{\ge
    k}(R')=\row_{\ge k+1}(R)$). Then $H:=\sh(R)/\sh(R')$ is a horizontal strip.
\item[Step 2] Update $R'$ by inserting $a_1,a_2,\dots,a_r$ in this order into
  $\row_{\ge k}(R')$ using the RSK algorithm. By the property of the RSK
  algorithm, if $a_i$ was the novel entry in column $j$, then $a_i$ bumps the
  $(k,j)$-entry of $\row_{\ge k}(R')$ (in case it exists) or $a_i$ is simply
  placed at position $(k,j)$ (in case $\row_{\ge k}(R')$ has no $(k,j)$-entry).
  Therefore row $k$ of $R'$ becomes the original row $k$ of $R$
  and the newly created cells of $R'$ lie in the horizontal strip $H$. Let $E'$
  be the union of $E$ and the remaining empty cells in $H$, which we fill with
  $k$'s. Finally, mark row $k$ of $R'$ as the level and define $\pu(T)=(R',E')$.
\end{description}
\end{defn}

\begin{figure}
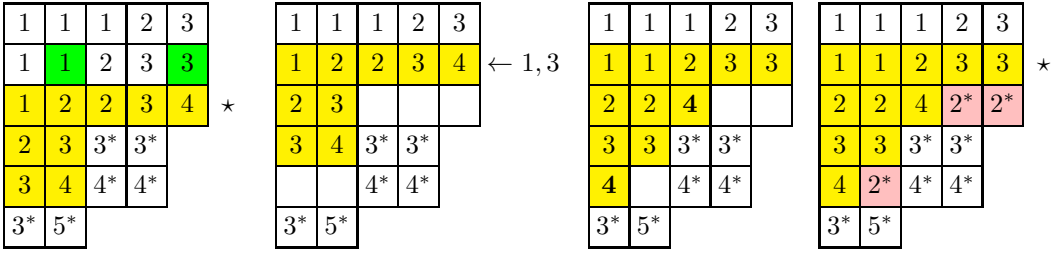

  \centering
  \begin{ytableau}
    1&1&1&2&3\\
    1&*(green)1&2&3&*(green)3\\
    *(yellow)1&*(yellow)2&*(yellow)2&*(yellow)3&*(yellow)4&\none[\star]\\
    *(yellow)2&*(yellow)3&3^*&3^*\\
    *(yellow)3&*(yellow)4&4^*&4^*\\
    3^*&5^*\\
  \end{ytableau}\quad
  \begin{ytableau}
    1&1&1&2&3\\
    *(yellow)1&*(yellow)2&*(yellow)2&*(yellow)3&*(yellow)4&\none[\leftarrow]&\none[ 1,3]\\
    *(yellow)2&*(yellow)3&&&\\
    *(yellow)3&*(yellow)4&3^*&3^*\\
    {}&&4^*&4^*\\
    3^*&5^*\\
  \end{ytableau}\quad
  \begin{ytableau}
    1&1&1&2&3\\
    *(yellow)1&*(yellow)1&*(yellow)2&*(yellow)3&*(yellow)3\\
    *(yellow)2&*(yellow)2&*(yellow)\textbf{4}&&\\
    *(yellow)3&*(yellow)3&3^*&3^*\\
    *(yellow)\textbf{4}&&4^*&4^*\\
    3^*&5^*\\
  \end{ytableau}\quad
  \begin{ytableau}
    1&1&1&2&3\\
    *(yellow)1&*(yellow)1&*(yellow)2&*(yellow)3&*(yellow)3&\none[\star]\\
    *(yellow)2&*(yellow)2&*(yellow)4&*(pink)2^*&*(pink)2^*\\
    *(yellow)3&*(yellow)3&3^*&3^*\\
    *(yellow)4&*(pink)2^*&4^*&4^*\\
    3^*&5^*\\
  \end{ytableau}
  \caption{An illustration of the level-decreasing map $\pu$ applied to
    $T=(R,E)\in\RSE_3(\lambda)$, where $\lambda=(5,5,5,4,4,2)$. The first
    diagram shows $T$ where the novel entries are colored green and $\row_{\ge
      3}(R)$ is colored yellow. If we remove row 2 and shift $\row_{\ge3}(R)$ up
    by one, we get $R'$ in the second diagram. If we insert the novel entries
    $1,3$ into $\row_{\ge2}(R')$, we get the third diagram, where the entries in the newly
    added cells are written in boldface. By filling the empty cells with $2^*$
    we obtain the fourth diagram, which is $\pu(T)\in \RSE_2(\lambda)$.}
  \label{fig:pu}
\end{figure}

\begin{defn} \label{def:pd}
[The level-increasing map $\pd:\RSE_{k}(\lambda)\to \RSE_{k+1}(\lambda)$]
  Let $\lambda$ be a partition with $\ell(\lambda)=\ell$ and let $T=(R,E)\in
  \RSE_{k}(\lambda)$ with $1\le k\le \ell-1$. Then $\pd(T)$ is defined as
  follows.
\begin{description}
\item[Step 1] Let $c_1<c_2<\dots<c_r$ be the column indices $j$ such that column
  $j$ of $E$ does not contain $k$. Let $E'$ be the tableau obtained from $E$ by
  removing the cells containing $k$. For $i=r,r-1,\dots,1$ in this order, apply
  the reverse RSK algorithm to $\row_{\ge k}(R)$ starting from the last cell of
  column $c_i$ and denote the resulting tableau by $R_1$. Let $a_i$ and $b_i$ be
  the integers such that the reverse RSK algorithm bumps $a_i$ at position
  $(k,b_i)$ at the end.
\item[Step 2] Let $R'$ be the tableau obtained from $R$ by replacing $\row_{\ge
    k}(R)$ by $R_1$. Shift $\row_{\ge k}(R')$ down by one so that row $k$ of
  $R'$ is now empty. For each $1\le j\le \lambda_{k}$, if $j=b_i$ for some $i$,
  then let $R'(k,j)=a_i$, and otherwise let $R'(k,j)$ equal $R'(k+1,j)$.
  Finally, mark row $k+1$ of $R'$ as the level and define $\pd(T)=(R',E')$.
\end{description}
\end{defn}

\begin{figure}
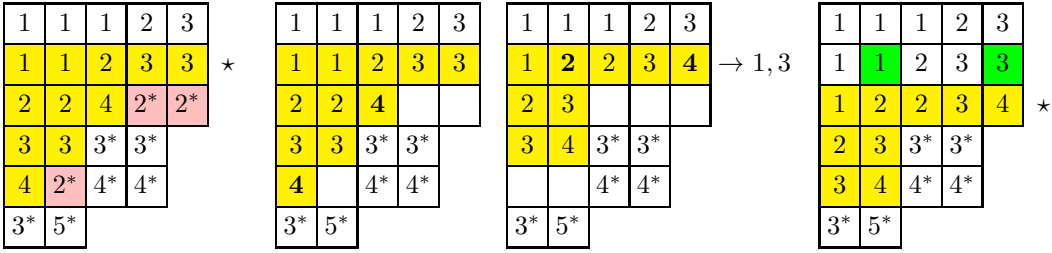

  \begin{ytableau}
    1&1&1&2&3\\
    *(yellow)1&*(yellow)1&*(yellow)2&*(yellow)3&*(yellow)3&\none[\star]\\
    *(yellow)2&*(yellow)2&*(yellow)4&*(pink)2^*&*(pink)2^*\\
    *(yellow)3&*(yellow)3&3^*&3^*\\
    *(yellow)4&*(pink)2^*&4^*&4^*\\
    3^*&5^*\\
  \end{ytableau}\quad
  \begin{ytableau}
    1&1&1&2&3\\
    *(yellow)1&*(yellow)1&*(yellow)2&*(yellow)3&*(yellow)3\\
    *(yellow)2&*(yellow)2&*(yellow)\textbf{4}&&\\
    *(yellow)3&*(yellow)3&3^*&3^*\\
    *(yellow)\textbf{4}&&4^*&4^*\\
    3^*&5^*\\
  \end{ytableau}\quad
  \begin{ytableau}
    1&1&1&2&3\\
    *(yellow)1&*(yellow)\textbf{2}&*(yellow)2&*(yellow)3&*(yellow)\textbf{4}
    &\none[\rightarrow]&\none[1,3]\\
    *(yellow)2&*(yellow)3&&&\\
    *(yellow)3&*(yellow)4&3^*&3^*\\
    {}&&4^*&4^*\\
    3^*&5^*\\
  \end{ytableau}\quad
  \begin{ytableau}
    1&1&1&2&3\\
    1&*(green)1&2&3&*(green)3\\
    *(yellow)1&*(yellow)2&*(yellow)2&*(yellow)3&*(yellow)4&\none[\star]\\
    *(yellow)2&*(yellow)3&3^*&3^*\\
    *(yellow)3&*(yellow)4&4^*&4^*\\
    3^*&5^*\\
  \end{ytableau}
  \caption{An illustration of the level-increasing map $\pd$ applied to
    $T=(R,E)\in\RSE_2(\lambda)$, where $\lambda=(5,5,5,4,4,2)$. The first
    diagram shows $T$, where $\row_{\ge2}(R)$ is colored yellow and the cells
    with a $2^*$ are colored pink. If we remove the cells with a $2^*$ we get
    the second diagram, where the entry of the last cell in each column without
    a $2^*$ is written in boldface. If we perform the reverse RSK algorithm to
    $\row_{\ge2}(R)$ starting from each boldface entry from right to left, we
    obtain $R'$ in the third diagram, where $1$ and $3$ are bumped from columns
    $2$ and $5$ respectively. By shifting $\row_{\ge2}(R')$ down by one, putting
    the bumped entries in the corresponding columns in row $2$, and filling each
    empty cell with the same entry directly below it, we obtain the fourth
    diagram, which is $\pd(T)\in \RSE_3(\lambda)$.}
  \label{fig:pd}
\end{figure}

The following proposition is shown in \cite[Proof of Theorem~9.8]{LP2007}.

\begin{prop}\label{prop:inverse}
  Let $\lambda$ be a partition with $\ell(\lambda)=\ell$. For $1\le k\le \ell-1$,
  the maps $\pd:\RSE_{k}(\lambda)\to \RSE_{k+1}(\lambda)$ and
  $\pu:\RSE_{k+1}(\lambda)\to \RSE_{k}(\lambda)$ are weight-preserving bijections
  and they are mutual inverses.
\end{prop}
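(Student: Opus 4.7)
My plan is to prove Proposition~\ref{prop:inverse} in three stages: well-definedness of $\pu$ and $\pd$, mutual invertibility, and weight preservation. The central technical tool throughout is the reversibility of RSK row insertion, captured by the Row Bumping Lemma: if row-inserting an entry $a$ into an SSYT $P$ creates a new cell at position $(i_0, j_0)$, then reverse row-insertion starting at $(i_0, j_0)$ recovers $a$ and $P$.

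First, I would verify that $\pu(T)\in\RSE_k(\lambda)$ and $\pd(T')\in\RSE_{k+1}(\lambda)$ for appropriate inputs. The key fact here is that after Step 2 of $\pu$, the new row $k$ of $R'$ coincides positionally with the old row $k$ of $R$. This follows from the RSK bumping pattern: the novel entries $a_1\le\cdots\le a_r$, inserted in order into the SSYT $\row_{\ge k+1}(R)$, bump the entries $R(k+1,j_i)$ (at the columns $j_i$ of the corresponding novel entries) and occupy those positions themselves; after the shift identifying $\row_{\ge k+1}(R)$ with $\row_{\ge k}(R')$, these slots become row $k$ of $R'$, together with non-novel positions where $R(k+1,j)$ already equals $R(k,j)$. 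From this identification it follows quickly that $R'$ is an RPP and that the newly added $k$'s in $E'$ (all in rows $\ge k+1$) respect the elegant-tableau bound.

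Next, I would establish $\pd\circ\pu=\mathrm{id}$ by tracking which cells in $\pu(T)=(R',E')$ contain $k$ in $E'$. These are exactly the cells of the horizontal strip $H$ that were \emph{not} created during the RSK insertion in Step 2 of $\pu$; dually, the columns of $E'$ missing a $k$ are precisely those where RSK produced a new cell. The reverse RSK in Step 1 of $\pd$, applied from these columns in the reverse order of their creation, is an iterated application of the Row Bumping Lemma and recovers the novel entries $a_r,\ldots,a_1$ at their original positions in row $k$ of $R$. Combined with the reconstruction rule in Step 2 of $\pd$ (which fills $R'(k+1,j)$ at non-bumped positions, corresponding to non-novel cells), this reconstructs row $k$ of $R$ exactly, and stripping the $k$'s from $E'$ recovers $E$. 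The opposite composition $\pu\circ\pd=\mathrm{id}$ is symmetric.

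Finally, for weight preservation, the $t$-part is a direct counting argument: each non-novel cell in row $k$ of $R$ contributes one $t_k$ to $\wt(R)$, and these contributions are exactly replaced by the newly added $k$'s in $E'$, whose count equals $|H|$ minus the number of novel entries, which equals the number of non-novel cells. For the $x$-part, I would first note that $\row_{\ge k+1}(R)$ and $\row_{\ge k}(R')$ are SSYTs whose entry multisets differ precisely by the novel entries; since column-distinct-counts equal multiset counts for SSYTs, this yields the correct change in $a_i$ coming from rows $\ge k$. The identity ``new row $k$ of $R'$ equals old row $k$ of $R$'' then ensures that this change combines correctly with the unchanged contribution of rows $\le k-1$, so $a_i(R)=a_i(R')$ overall. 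I expect the main obstacle to be precisely this last step: a careful columnwise argument is required to show that the interaction between rows $\le k-1$ and rows $\ge k$ in contributing distinct column values is preserved under $\pu$, avoiding double-counting across the boundary at row $k$.
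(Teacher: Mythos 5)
Your outline is correct and follows the same route the paper relies on: the paper itself gives no proof of Proposition~\ref{prop:inverse}, deferring entirely to the citation \cite[Proof of Theorem~9.8]{LP2007}, and your argument is a faithful reconstruction of that RSK-reversibility proof (well-definedness via the fact that the novel entries re-enter row $k$ at their original columns, invertibility via the Row Bumping Lemma applied to the new cells in reverse order of creation, and the $t_k$/$x_i$ bookkeeping). Two small points worth making explicit when you write it up: the identification ``columns of $E'$ without a $k$ $=$ columns where insertion created a new cell'' rests on the observation that the horizontal strip $H$ meets every column $j\le\lambda_k$ in exactly one cell; and for the $x$-part, the boundary double-counting you flag is resolved precisely because rows $\le k$ of $\pu(T)$ agree with rows $\le k$ of $T$, so the correction term $\#\{j: R(k-1,j)=R(k,j)=i\}$ is unchanged, while weight preservation for $\pd$ then follows for free once the two maps are shown to be mutually inverse.
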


As a corollary to Proposition~\ref{prop:inverse}, we obtain that the map
$\pu^{\ell-1}:\RSE_\ell(\lambda)\to\RSE_1(\lambda)$ is a weight-preserving bijection.
Since we can identify $\RSE_\ell(\lambda)$ with $\RPP(\lambda)$, it follows that
\[
  \gg_\lambda(x;t) = \sum_{R\in\RPP(\lambda)} \wt(R)= \sum_{T\in\RSE_\ell(\lambda)}
  \wt(T)= \sum_{T\in\RSE_1(\lambda)} \wt(T).
\]
Note that since $\lnc=\lsnc$ for $\mu=\emptyset$, Proposition~\ref{prop:snc} shows
\[
  \det \left( e_{\lambda'_i-i+j}(x_1,x_2,\dots,t_{1},t_{2},\dots,
    t_{\lambda'_i-1}) \right)_{1\le i,j\le n} = \sum_{\pp\in
    \L^{\mathrm{NC}}_\lambda} \wt(\pp).
\]
By Proposition~\ref{prop:nc},
\[
\sum_{\pp\in \L^{\mathrm{NC}}_\lambda} \wt(\pp) = \sum_{T\in\RSE_1(\lm)}\wt(T).
\]
Combining the above three equations we obtain \eqref{eq:Yeliussizov}. This proof
is essentially the same as Yeliussizov's \cite[\S 10.1]{Yeliussizov2017}.

\section{Skew RSE-tableaux}
\label{sec:skew-rse-tableaux}

In this section we extend the definition of RSE-tableaux to skew shapes and
study properties of the maps $\pu$ and $\pd$ on them. We first need to extend
the definition of elegant tableaux.

Let $\mu$ be a partition. A \emph{$\mu$-elegant tableau} is an SSYT $E$ of a
certain skew shape $\lambda/\nu$ with $\mu\subseteq\nu$ such that $\mu'_j+1\le
E(i,j)\le i-1$ for all $(i,j)\in \lambda/\nu$. Note that the $\emptyset$-elegant
tableaux are the usual elegant tableaux.

\begin{defn}
  A \emph{(skew) RSE-tableau of shape $\lm$ of level $k$} is a pair $T=(R,E)$
  satisfying the following conditions:
\begin{itemize}
\item $R\in\RPP(\nu/\mu)$ with row $k$ of $R$ marked, for some partition $\nu$
  satisfying $\mu\subseteq\nu\subseteq\lambda$ and $\row_{\le k}(\nu)=\row_{\le
    k}(\lambda)$,
\item $\row_{\ge k}(R)$ is an SSYT, and 
\item $E$ is a $\mu$-elegant tableau of shape $\lambda/\nu$ such that every
  entry in $E$ is at least $k$.
\end{itemize}
The set of RSE-tableaux of shape $\lm$ and level $k$ is denoted by
$\RSE_k(\lm)$.
\end{defn}

\begin{figure}
  \centering
  \begin{ytableau}
   \none& \none & \none&\none  &3&3 \\
   \none&\none&\none &1 &3 &\none[\star]\\
   \none & 1& 1&2 \\
   1 &2 &3^* &3^* \\
   2 &4^* \\
  \end{ytableau}\qquad\qquad
  \begin{ytableau}
   \muentry1 & \muentry1 & \muentry1 &\muentry1  &3&3 \\
   \muentry2&\muentry2&\muentry2&1 &3 &\none[\star]\\
   \muentry3& 1& 1&2 \\
   1 &2 &3^* &3^* \\
   2 &4^* \\
  \end{ytableau}
  \caption{A skew RSE-tableau $T\in\RSE_2(\lm)$ for $\lambda=(6,5,4,4,2)$ and
    $\mu=(4,3,1)$ on the left, and its corresponding tableau $\overline{T}\in
    \oRSE_2(\lm)$ on the right, where the cells in $\mu$ are colored gray.}
  \label{fig:skew RSE}
\end{figure}

The \emph{weight} of $T=(R,E)\in\RSE_{k}(\lm)$ is defined in the same way by
\[
\wt(T) = \wt(R)t_E.
\]
For example, if $T$ is the skew RSE-tableau in Figure~\ref{fig:skew RSE}, then
$\wt(R)=x_1^4x_2^3x_3^2t_1$, $t_E=t_3^2t_4$, and
$\wt(T)=\wt(R)t_E=x_1^4x_2^3x_3^2t_1t_3^2t_4$.

We also define $\oRSE_k(\lambda)$
to be the set of RSE-tableaux $(R,E)$ of shape $\lambda$ and level $k$,
where the entries of $R$ are taken from
\begin{equation}
  \label{eq:overline1}
\{\overline{1}<\overline{2}<\cdots<1<2<\cdots\},
\end{equation}
while the entries of $E$ are still positive integers.

For $T=(R,E)\in \RSE_k(\lm)$, let $\overline{T}$ be the RSE-tableau
$(\overline{R},E)\in\oRSE_k(\lambda)$, where $\overline{R}$ is obtained from $R$
by filling the cells in row $i$ of $\mu$ with $\overline{i}$'s for each $1\le
i\le \ell(\mu)$. We will identify $T$ with $\overline{T}$ so that
$\RSE_k(\lm)\subseteq\oRSE_k(\lambda)$. By replacing each entry $i$ in $E$ by
$i^*$ and putting $\overline{R}$ and $E$ together we will also consider
$T=\overline{T}=(\overline{R},E)\in \RSE_k(\lm)$ as a tableau of shape $\lambda$
whose entries are taken from
\begin{equation}
  \label{eq:ov1,1,1^*}
\{\overline{1}<\overline{2}<\cdots<1<2<\cdots<1^*<2^*<\cdots\}.  
\end{equation}
We call the elements in \eqref{eq:ov1,1,1^*} the \emph{extended integers}. See
Figure~\ref{fig:skew RSE} for an example of this correspondence. Sometimes we
will also consider $T\in\RSE_k(\lm)$ as an RPP of shape $\lambda$ whose entries
are extended integers. We call $\overline{i}$ a \emph{negative entry} and $i^*$
an \emph{$\omega$-entry}.

The following proposition is immediate from the definition of $\RSE_k(\lm)$.
\begin{prop}\label{prop:oRSE}
Let $T\in \oRSE_k(\lambda)$. Then $T\in\RSE_k(\lm)$ if and only if the
following conditions hold:
\begin{enumerate}
\item The cells containing a negative entry are exactly those in $\mu$.
\item If $(i,j)\in\mu$, then $T(i,j)=\overline{i}$.
\item If $T(i,j)=a^*$, then $\mu'_{j}+1\le a\le \lambda'_j-1$.
\end{enumerate}
\end{prop}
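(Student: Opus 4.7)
The plan is to unpack both definitions and verify that the identification $T\leftrightarrow\overline{T}$ between $\RSE_k(\lm)$ and a subset of $\oRSE_k(\lambda)$ is captured precisely by conditions (1), (2), (3). I will handle the two directions separately; the argument is essentially a definitional check once each condition is matched against the corresponding clause in the defining conditions of $\RSE_k(\lm)$ and $\oRSE_k(\lambda)$.

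For the forward direction, suppose $T=(R,E)\in \RSE_k(\lm)$, so that $R\in \RPP(\nu/\mu)$ for some partition $\nu$ with $\mu\subseteq\nu\subseteq\lambda$ and $\row_{\le k}(\nu)=\row_{\le k}(\lambda)$. By construction of the identification, $\overline{T}=(\overline{R},E)$ is obtained by placing $\overline{i}$ in every cell of row $i$ of $\mu$, so conditions (1) and (2) are immediate. For condition (3), if $\overline{T}(i,j)=a^*$ then $(i,j)\in \lambda/\nu$ and $E(i,j)=a$, and the $\mu$-elegant property of $E$ yields $\mu'_j+1\le a\le i-1$; since $(i,j)\in\lambda$ forces $i\le \lambda'_j$, we conclude $a\le \lambda'_j-1$.

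Conversely, suppose $T=(R_0,E)\in \oRSE_k(\lambda)$ satisfies (1), (2), (3), and write $\nu=\sh(R_0)$, so that $\nu\subseteq\lambda$ and $\row_{\le k}(\nu)=\row_{\le k}(\lambda)$ already hold from membership in $\oRSE_k(\lambda)$. Condition (1) says the negative entries of $R_0$ occupy exactly $\mu$, hence $\mu\subseteq\nu$; let $R$ denote the restriction of $R_0$ to $\nu/\mu$, which has positive integer entries. Then $R\in \RPP(\nu/\mu)$ because row- and column-weak-monotonicity pass to any subdiagram, and $\row_{\ge k}(R)$ remains an SSYT since it is obtained from the SSYT $\row_{\ge k}(R_0)$ by removing the negative cells (removing entries from a strictly increasing column leaves it strictly increasing). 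The elegant inequality $E(i,j)\le i-1$ is inherited from $\oRSE_k(\lambda)$, while condition (3) provides the complementary bound $\mu'_j+1\le E(i,j)$, so $E$ is a $\mu$-elegant tableau of shape $\lambda/\nu$ with entries $\ge k$. Thus $(R,E)\in \RSE_k(\lm)$ and its image under the identification is $T$.

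No substantive obstacle is expected; the only noteworthy calculation is the chain $a\le i-1\le \lambda'_j-1$ used in the forward direction, which explains why the sharper elegant bound $a\le i-1$ is correctly reflected by the coarser column-length bound $a\le \lambda'_j-1$ in (3).
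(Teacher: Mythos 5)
Your proof is correct, and it is essentially the argument the paper has in mind: the paper declares Proposition~\ref{prop:oRSE} immediate from the definitions, and your write-up is exactly that definitional unwinding (conditions (1)--(2) encode the identification $T\leftrightarrow\overline{T}$, the lower bound in (3) encodes $\mu$-elegance, and the upper bound follows from $E(i,j)\le i-1$ together with $i\le\lambda'_j$). Nothing is missing; the converse correctly recovers $\nu$, the restriction $R$, and the $\mu$-elegance of $E$ from (1)--(3) plus membership in $\oRSE_k(\lambda)$.
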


Note that if $T=(R,E)\in\RSE_1(\lm)$ then we can regard $T$ as an SSYT of shape
$\lm$ whose entries are from $\{1,2,\dots,1^*,2^*,\dots\}$. Using this
observation, similarly to Proposition~\ref{prop:nc}, the following proposition
is easy to verify.
\begin{prop}\label{prop:nc2}
  The map $\Tab$ is a weight-preserving bijection between $\lnc$ and
  $\RSE_1(\lm)$. 
\end{prop}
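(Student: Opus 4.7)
The plan is to adapt the proof of Proposition~\ref{prop:nc} to the skew case using the characterization of $\RSE_1(\lm)$ in $\oRSE_1(\lambda)$ given by Proposition~\ref{prop:oRSE}. Weight-preservation will come from Proposition~\ref{prop:Tab}; the work is in identifying the image of $\lnc$ under $\Tab$ as a subset of $\VT(\lambda/\mu)$ and matching it with $\RSE_1(\lm)$.

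First I would observe that for any $\pp\in\lnc$ one has $\type(\pp)=\operatorname{id}$, because the starting $x$-coordinates $\mu'_i+n-i$ and ending $x$-coordinates $\lambda'_i+n-i$ are strictly decreasing in $i$ (since $\mu'_i,\lambda'_i$ are weakly decreasing and $-i$ is strictly decreasing), so noncrossing forces the type to be the identity. By Lemma~\ref{lem:VT}, $\Tab(\pp)\in \VT(\lambda/\mu)$, and by Proposition~\ref{prop:Tab} the restriction of $\Tab$ to $\lnc$ is injective and weight-preserving. I would then characterize its image as those $T\in\VT(\lambda/\mu)$ which, read in the extended order $\{1<2<\cdots<1^*<2^*<\cdots\}$, satisfy (a) the SSYT condition (rows weakly increase, columns strictly increase), and (b) every starred entry $k^*$ at a cell $(r,i)$ lies in $\mu'_i+1\le k\le \lambda'_i-1$. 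The column-strict part of (a) is automatic for vertical tableaux; the row-weak part is the standard LGV-style equivalent of noncrossing, using that the entry at $(r,i)$ is the height of the $(r-\mu'_i)$-th diagonal step of $p_i$. Condition (b) is the direct translation of the allowed $\omega$-heights for diagonal steps in $\LL(i,i)$.

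Finally I would match (a)--(b) with $\RSE_1(\lm)$. Under the identification $T\leftrightarrow\overline T\in\oRSE_1(\lambda)$, Proposition~\ref{prop:oRSE} applied with $k=1$ states that $T\in\RSE_1(\lm)$ iff the $\overline{i}$-filled cells are exactly $\mu$ and every $\omega$-entry $a^*$ at $(i,j)$ satisfies $\mu'_j+1\le a\le \lambda'_j-1$; the overall SSYT condition on $\overline T$ in the extended order splits, by the placement of $\overline i$'s in $\mu$, into the $R$-part on $\nu/\mu$, the $E$-part on $\lambda/\nu$, and automatic compatibility across the boundary, so that after forgetting $\mu$ it becomes exactly (a). The remaining $\mu$-elegant upper bound $E(r,i)\le r-1$ is automatic in this context: in column $i$ the $\omega$-entries must occupy the bottom $s$ rows $\lambda'_i-s+1,\dots,\lambda'_i$, and strict column increase together with $k\le\lambda'_i-1$ forces the value at row $\lambda'_i-s+j$ to be at most $\lambda'_i-s+j-1$. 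Weight-preservation is then immediate from Proposition~\ref{prop:Tab}, since the multiplicity of $h$ (resp.\ $h^*$) in $T$ equals the number of diagonals of $\pp$ ending at height $h$ (resp.\ $\omega+h$), and $R$ is an SSYT at level $1$ so $\wt(R)=x_R$. The main technical step will be the skew LGV equivalence in (a), where adjacent starts differ by $\mu'_i-\mu'_{i+1}+1$ rather than $1$, but this is handled uniformly by the $(r-\mu'_i)$-th-diagonal identification above.
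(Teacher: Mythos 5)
Your proposal is correct and takes essentially the same route as the paper: the paper's proof of this proposition consists precisely of the observation that a level-$1$ RSE-tableau of shape $\lambda/\mu$ can be regarded as an SSYT in the extended alphabet $\{1<2<\cdots<1^*<2^*<\cdots\}$, with the remaining verification (type forced to be the identity, row-weakness equivalent to noncrossing, the starred-entry window $\mu'_j+1\le a\le \lambda'_j-1$ matching the height restrictions, and the elegance bound $E(i,j)\le i-1$ automatic) left to the reader. You have simply written out those details, including the correct alignment handling the offsets $\mu'_i-\mu'_{i+1}$ between adjacent starting points.
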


If $T=(R,E)\in\RSE_\ell(\lm)$, then by definition of an RSE-tableau, we must
have $E=\emptyset$ and $R$ can be any RPP of shape $\lm$ whose entries are
positive integers. Hence, we will identify $\RSE_\ell(\lm)$ with $\RPP(\lm)$.

Using the ordering given by \eqref{eq:overline1}, the same maps $\pu$ and $\pd$
are applied to $\oRSE_k(\lambda)$. By the identification
$\RSE_k(\lm)\subseteq\oRSE_k(\lambda)$, these maps $\pu$ and $\pd$ are also
applied to $\RSE_k(\lm)$. See Figure~\ref{fig:skew pd} for an example.

\begin{figure}
  \centering
  \begin{ytableau}
   \muentry1 & \muentry1 & \muentry1 &\muentry1  &3&3 \\
   \muentry2&\muentry2&\muentry2&1 &3 &\none[\star]\\
   \muentry3& 1& 1&2 \\
   1 &2 &3^* &3^* \\
   2 &4^* \\
  \end{ytableau}\qquad\qquad
  \begin{ytableau}
   \muentry1 & \muentry1 & \muentry1 &\muentry1  &3&3&\none[\star] \\
   \muentry2&\muentry2&\muentry2&1 &1^* \\
   \muentry3& 1& 1&2 \\
   1 &2 &3^* &3^* \\
   2 &4^* \\
  \end{ytableau}
  \caption{If $T_1\in\RSE_2(\lm)$ and $T_2\in\RSE_1(\lm)$ are the left and right
    diagrams respectively, then $\pu(T_1)=T_2$ and $\pd(T_2)=T_1$.}
  \label{fig:skew pd}
\end{figure}

The following definitions will be used frequently for the rest of this paper.
See Figure~\ref{fig:le} for an example. 

\begin{defn}
  Let $T_1$ and $T_2$ be RPPs whose entries are extended integers. Define
  $T_1\sqcup T_2$ to be the tableau obtained by concatenating $T_1$ and $T_2$,
  i.e., $\col_{\le k}(T_1\sqcup T_2)=T_1$ and $\col_{\ge k+1}(T_1\sqcup
  T_2)=T_2$, where $k$ is the number of columns in $T_1$. Define $T_1\le T_2$ if
  $T_1\sqcup T_2$ is also an RPP (with extended integers).
\end{defn}

\begin{figure}
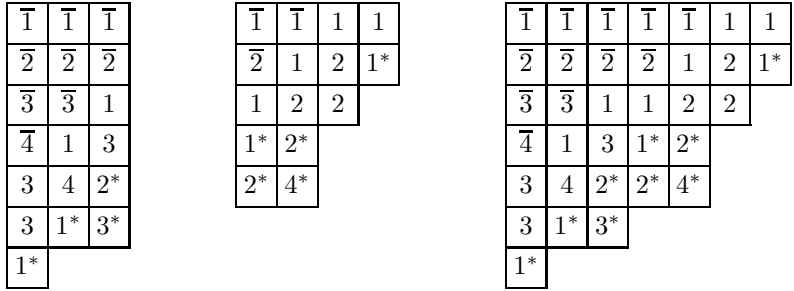

  \centering
  \begin{ytableau}
    \overline{1}&\overline{1}&\overline{1}\\
    \overline{2}&\overline{2}&\overline{2}\\
    \overline{3}&\overline{3}&1\\
    \overline{4}&1&3\\
    3&4&2^*\\
    3&1^*&3^*\\
    1^*\\
  \end{ytableau}\qquad\qquad
  \begin{ytableau}
    \overline{1}&\overline{1}&1&1\\
    \overline{2}&1&2&1^*\\
    1&2&2\\
    1^*&2^*\\
    2^*&4^*\\
    \none\\
    \none
  \end{ytableau}\qquad\qquad
  \begin{ytableau}
    \overline{1}&\overline{1}&\overline{1}&\overline{1}&\overline{1}&1&1\\
    \overline{2}&\overline{2}&\overline{2}&\overline{2}&1&2&1^*\\
    \overline{3}&\overline{3}&1&1&2&2\\
    \overline{4}&1&3&1^*&2^*\\
    3&4&2^*&2^*&4^*\\
    3&1^*&3^*\\
    1^*\\
  \end{ytableau}
  \caption{An RPP $T_1$ on the left, an RPP $T_2$ in the middle, and $T_1\sqcup
    T_2$ on the right. Since $T_1\sqcup T_2$ is also an RPP, we have $T_1\le
    T_2$.}
  \label{fig:le}
\end{figure}

Note that if $T_1$ and $T_2$ are RSE-tableaux of levels $k_1$ and $k_2$,
respectively, with $k_1\le k_2$ such that $T_1\le T_2$, then $T_1\sqcup T_2$
with row $k_2$ marked is an RSE-tableau of level $k_2$.

In Lemma~\ref{lem:pu} below we will show that $\pu$ is a map from $\RSE_k(\lm)$
to $\RSE_{k-1}(\lm)$, i.e., for every $T\in \RSE_k(\lm)$ we have
$\pu(T)\in\RSE_{k-1}(\lm)$. On the contrary, $\pd$ does not always send an
element in $\RSE_{k-1}(\lm)$ to an element in $\RSE_{k}(\lm)$, see
Figure~\ref{fig:not in image}. We will find equivalent conditions for
$T\in\RSE_{k-1}(\lm)$ to satisfy $\pd(T)\in \RSE_{k}(\lm)$ in
Lemma~\ref{lem:pd}.

\begin{figure}
  \centering
  \begin{ytableau}
   \overline{1} & 1&\none[\star] \\
   2 &1^* \\
  \end{ytableau}\qquad\qquad
  \begin{ytableau}
   \overline{1} & 1 \\
   \overline{1} &2 &\none[\star]\\
  \end{ytableau}
  \caption{An RSE-tableau $T\in\RSE_1((2,2)/(1))$ on the left and its image
    $\pd(T)\in\oRSE_2((2,2))$ on the right. Note that
    $\pd(T)\not\in\RSE_2((2,2)/(1))$.}
  \label{fig:not in image}
\end{figure}

\begin{lem}\label{lem:pu}
  Let $T\in\RSE_{k+1}(\lm)$. Then $\pu(T)\in \RSE_{k}(\lm)$ and, for all $1\le
  s\le \mu_k$, 
  \begin{equation}
    \label{eq:pu col_i}
    \pu(T) = \col_{\le s}(T) \sqcup \pu(\col_{\ge s+1}(T)).    
  \end{equation}
\end{lem}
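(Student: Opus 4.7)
The plan is to analyze the $\pu$-algorithm on $T\in\RSE_{k+1}(\lm)$ step by step, focusing on the interaction between the negative entries $\overline{k}$ occupying the first $\mu_k$ cells of row $k$ and the RSK insertion procedure.

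First, I would observe that the entries $\overline{k}$ in row $k$ of $T$, in columns $1,\dots,\mu_k$, are all novel in the sense of Step 1 of Definition~\ref{def:pu}, since the entry directly below each in row $k+1$ is either $\overline{k+1}$ (when the cell lies in $\mu$) or a positive integer, never $\overline{k}$. Consequently, the sorted list of novel entries begins with $\mu_k$ copies of $\overline{k}$, followed by the positive novel entries drawn from columns $\mu_k+1,\dots,\lambda_k$.

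The core technical work is to show that these $\mu_k$ RSK-insertions of $\overline{k}$'s only modify columns $1,\dots,\mu_k$ of $\row_{\ge k}(R')$, and that after all of them those columns coincide exactly with columns $1,\dots,\mu_k$ of $T$ in rows $\ge k$ (with the level marker moved from row $k+1$ to row $k$). Since $\overline{k}$ is strictly smaller than every entry currently present, the $m$-th $\overline{k}$-insertion places $\overline{k}$ at position $(k,m)$, and by the Row Bumping Lemma its entire cascade stays in columns $\le m$. I would then prove by induction on the row $r\ge k$ that the $m$-th cascade bumps $T(r+1,m)$ from $(r,m)$ to $(r+1,m)$ in $R'$: the column-strict SSYT property of $\row_{\ge k+1}(R)$ (guaranteeing $T(r+2,m)>T(r+1,m)$) identifies the bump site, while the row weakly-increasing property of RPPs (ensuring $T(r+1,j)\le T(r+1,m)$ for all $j<m$), together with the inductive hypothesis that earlier cascades have already restored columns $<m$, ensures that every position strictly to the left of column $m$ is skipped. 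Thus each column in the $\mu$-region is restored to its original $T$-contents. Next, I would verify that the subsequent positive-integer insertions (from columns $>\mu_k$ of row $k$ of $T$) begin their cascades at columns $>\mu_k$ and remain there, using a similar inductive argument driven by the weakly-increasing row property of the now-restored rows. Combined, these facts establish the splitting formula for $s=\mu_k$. For a general $s$ with $1\le s\le \mu_k$, applying the $s=\mu_k$ case recursively to $\col_{\ge s+1}(T)\in\RSE_{k+1}(\col_{\ge s+1}(\lm))$ --- which has row-$k$ inner length $\mu_k-s$ --- yields the desired splitting by concatenating the two decompositions.

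Finally, the containment $\pu(T)\in\RSE_k(\lm)$ follows from Proposition~\ref{prop:oRSE}: the splitting formula for $s=\mu_k$ gives conditions (1) and (2) directly (negative entries occupy exactly the cells of $\mu$, each cell $(i,j)\in\mu$ still containing $\overline{i}$); and the new $\omega$-entries $k^*$, which fill the empty cells of the horizontal strip $H=\sh(R)/\sh(R')$, lie in columns $j>\mu_k$ (again by the splitting formula), yielding $\mu'_j+1\le k$ for condition (3). The main obstacle is the careful inductive bookkeeping in the cascade analysis --- in particular, verifying simultaneously that each bump occurs at the intended column and that the column-strict and weakly-increasing-row properties propagate correctly through the tableau as it is progressively modified by the cascades.
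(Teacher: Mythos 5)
Your overall route is the same as the paper's: identify the $\overline{k}$'s in columns $1,\dots,\mu_k$ of row $k$ as novel entries, show their bumping cascades are straight vertical paths that restore columns $1,\dots,s$, show the remaining insertions never touch those columns so that \eqref{eq:pu col_i} holds, and then verify the three conditions of Proposition~\ref{prop:oRSE} (the paper proves the claim for all $s\le\mu_k$ at once, whereas you do $s=\mu_k$ and then recurse on $\col_{\ge s+1}(T)$; that difference is cosmetic). Your detailed induction for the vertical cascades is fine and in fact more explicit than the paper's one-line justification.

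The one step whose stated justification does not close is the assertion that the cascades of the positive novel entries (those starting in columns $>\mu_k$) ``remain there'' by an induction \emph{driven by the weakly-increasing row property of the now-restored rows}. When such a cascade carries a value $v$ from position $(r-1,c)$ with $c>\mu_k$ into row $r$, you need $v\ge T(r,\mu_k)$ to conclude that the bump site stays in columns $>\mu_k$. If $v$ is still an untouched original entry this follows from row-monotonicity of $T$, but if $v$ was itself displaced into $(r-1,c)$ by an earlier insertion, row-monotonicity of the current tableau only gives $v\ge T(r-1,\mu_k)$, which (by column-strictness) is weaker than what is needed; your invariant does not supply the missing comparison. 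The clean repair is exactly what the paper uses: by Fulton's Row Bumping Lemma, since the novel entries are inserted in weakly increasing order, each insertion route lies strictly to the right of the previous one, and its new box lies weakly above the previous new box; chaining this, every route after the $s$-th lies strictly to the right of the vertical route in column $s$, which gives both identities of the splitting at once. (Two smaller glosses: for conditions (1)--(2) of Proposition~\ref{prop:oRSE} you also need that rows $<k$ are untouched, row $k$ is restored, and the number of negative entries is preserved, since the negative entries in columns $>\mu_k$ sit in rows $<k$; and for condition (3) you should also record the upper bound $k\le\lambda'_j-1$ for the new $k^*$'s and that the old $\omega$-entries inherit their bounds from $T$.)
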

\begin{proof}
  Let $T=(R,E)$. Since $T\in\RSE_{k+1}(\lm)\subseteq\oRSE_{k+1}(\lambda)$, we
  have $\pu(T)\in\oRSE_{k}(\lambda)$. In order to show $\pu(T)\in
  \RSE_{k}(\lm)$, we must show that $\pu(T)$ satisfies the three conditions in
  Proposition~\ref{prop:oRSE}. To this end we first prove the following claim,
  which is equivalent to \eqref{eq:pu col_i}.

  \textbf{Claim:} If $1\le s\le\mu_k$, then $\col_{\le s}(\pu(T))= \col_{\le
    s}(T)$ and $\col_{\ge s+1}(\pu(T)) = \pu(\col_{\ge s+1}(T))$.

  The first $s$ entries of row $k$ in $T$ are all $\overline{k}$ and every entry
  in row $k+1$ is either $\overline{k+1}$ or a positive integer. Thus the first
  $s$ entries are novel entries. In the definition of $\pu$ we delete row $k$ of
  $R$ and insert the novel entries into $\row_{\ge k+1}(R)$ (after shifting it
  up by one). Since $\overline{k}$ is smaller than every entry in $\row_{\ge
    k+1}(R)$, each of the first $s$ insertion paths is a straight vertical path.
  Since insertion paths never intersect, the first $s$ columns are not changed
  after the insertion of the first $s$ $\overline{k}$'s. This shows the first
  identity of the claim. The fact that the insertion paths starting from columns
  of index greater than $\mu_k$ never enter columns of index at least $\mu_k$
  also implies the second identity of the claim.

  We now show that $\pu(T)$ satisfies the three conditions in
  Proposition~\ref{prop:oRSE}. For the first two conditions it is enough to show
  that the restrictions of $T$ and $\pu(T)$ to $\mu$ are equal because $\pu$
  preserves the total number of negative entries. Since $\mu$ is contained in
  $\row_{\le k}(\lambda)\cup \col_{\le \mu_k}(\lambda)$, this follows from the
  special case $\col_{\le \mu_k}(\pu(T))= \col_{\le \mu_k}(T)$ of the claim and
  the fact $\row_{\le k}(\pu(T))=\row_{\le k}(T)$. For the third condition note
  that in the construction of $\pu(T)=(R',E')$ from $T=(R,E)$, $E'$ is obtained
  from $E$ by adding some $k^*$'s. Suppose $\pu(T)(i,j)=a^*$. If $a\ne k$, then
  we must have $T(i,j)=a^*$. Since $T\in\RSE_{k+1}(\lm)$, in this case
  $\mu'_j+1\le a\le \lambda'_j-1$. If $a=k$, we must have $j\ge \mu_k+1$ since
  $\col_{\le \mu_k}(\pu(T))=\col_{\le \mu_k}(T)$ and $T$ has no $k^*$. But $j\ge
  \mu_k+1$ implies that $\mu'_j<k$, and $\pu(T)\in\oRSE_{k}(\lm)$ implies $k\le
  \lambda'_j-1$. Hence the third condition also holds and the proof is
  completed.
\end{proof}

\begin{lem}\label{lem:pd}
  Let $T\in\RSE_{k}(\lm)$. Then the following are equivalent:
  \begin{enumerate}
  \item $\pd(T)\in \RSE_{k+1}(\lm)$,
  \item $T\in \pu(\RSE_{k+1}(\lm))$,
  \item $\pd(T) = \col_{\le s}(T) \sqcup \pd(\col_{\ge s+1}(T))$, for all $1\le
    s\le \mu_k$, and
  \item $\col_{\le \mu_k}(T) \le \pd(\col_{\ge \mu_k+1}(T))$.
  \end{enumerate}
\end{lem}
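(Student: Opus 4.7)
The plan is to establish the cycle $(1) \Leftrightarrow (2) \Rightarrow (3) \Rightarrow (4) \Rightarrow (1)$. The first three implications follow quickly from Proposition~\ref{prop:inverse} and Lemma~\ref{lem:pu}; only $(4) \Rightarrow (1)$ requires real work.

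For $(1) \Leftrightarrow (2)$, Proposition~\ref{prop:inverse} says that $\pu$ and $\pd$ are mutual inverses on $\oRSE_k(\lambda)$ and $\oRSE_{k+1}(\lambda)$, so $\pd(T) \in \RSE_{k+1}(\lm)$ iff $T = \pu(\pd(T)) \in \pu(\RSE_{k+1}(\lm))$. For $(2) \Rightarrow (3)$, write $T = \pu(S)$ with $S \in \RSE_{k+1}(\lm)$ and apply Lemma~\ref{lem:pu} to $S$ to obtain, for every $1 \le s \le \mu_k$,
\[
T = \col_{\le s}(S) \sqcup \pu(\col_{\ge s+1}(S)),
\]
so $\col_{\le s}(T) = \col_{\le s}(S)$ and $\col_{\ge s+1}(T) = \pu(\col_{\ge s+1}(S))$; hitting the second identity with $\pd$ gives $\pd(\col_{\ge s+1}(T)) = \col_{\ge s+1}(S)$, and therefore $\pd(T) = S = \col_{\le s}(T) \sqcup \pd(\col_{\ge s+1}(T))$. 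For $(3) \Rightarrow (4)$, specialize (3) to $s = \mu_k$: the resulting identity exhibits $\pd(T)$ as a concatenation, and since $\pd(T)$ is automatically an RPP (being an element of $\oRSE_{k+1}(\lambda)$), the concatenation is an RPP, which is exactly (4).

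For $(4) \Rightarrow (1)$, set $T_1 := \col_{\le \mu_k}(T)$ and $T_2 := \col_{\ge \mu_k+1}(T)$, so $T = T_1 \sqcup T_2$. Note that $T_2 \in \RSE_k(\col_{\ge \mu_k+1}(\lm))$, and the inner partition of this reduced skew shape has empty $k$-th row, because $j \ge \mu_k + 1$ forces $\mu'_j \le k - 1$. I will first prove as a base case that whenever the inner partition of a skew shape $\lm'$ has empty $k$-th row, $\pd$ maps $\RSE_k(\lm')$ into $\RSE_{k+1}(\lm')$. Indeed, in this situation all negative entries lie in rows $\le k-1$, which $\pd$ never modifies, so the negatives of $\pd(T')$ remain exactly in the inner partition with the correct values; and since the new $E'$ has entries $\ge k+1$ while $\mu'_j \le k-1$ in every column of the inner partition, the $\omega$-entries of $\pd(T')$ satisfy the bounds of Proposition~\ref{prop:oRSE}, giving $\pd(T') \in \RSE_{k+1}(\lm')$. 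Applying this base case to $T_2$ yields $\pd(T_2) \in \RSE_{k+1}(\col_{\ge \mu_k+1}(\lm))$. Now set $S := T_1 \sqcup \pd(T_2)$, which is an RPP by (4); a direct check of Proposition~\ref{prop:oRSE} (using that $\omega$-entries of $T_1$, being in columns $\le \mu_k$ with $\mu'_j \ge k$, already satisfy $a \ge \mu'_j + 1 \ge k+1$) shows $S \in \RSE_{k+1}(\lm)$. Finally, Lemma~\ref{lem:pu} applied to $S$ with $s = \mu_k$ yields
\[
\pu(S) = T_1 \sqcup \pu(\pd(T_2)) = T_1 \sqcup T_2 = T,
\]
so $\pd(T) = S \in \RSE_{k+1}(\lm)$ by Proposition~\ref{prop:inverse}, which is (1).

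The main obstacle is the base case: one must verify that when the inner partition's $k$-th row is empty, none of the internal steps of $\pd$ --- the reverse RSK on $\row_{\ge k}(R)$, the re-filling of row $k$ of $R'$, and the deletion of $k$'s from $E$ --- can displace a negative entry or destroy $\mu$-elegance. Once that is in hand, the column-separation identity from Lemma~\ref{lem:pu} and the bijectivity from Proposition~\ref{prop:inverse} do all the remaining work for $(4) \Rightarrow (1)$.
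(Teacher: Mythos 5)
Your proof is correct, and while the implications $(1)\Leftrightarrow(2)\Rightarrow(3)\Rightarrow(4)$ coincide with the paper's argument, your proof of $(4)\Rightarrow(1)$ takes a genuinely different route. The paper works directly inside the computation of $\pd(T)$: it shows that the reverse RSK bumping paths started in the first $\mu_k$ columns are straight vertical (using the hypothesis $\col_{\le\mu_k}(T)\le\pd(\col_{\ge\mu_k+1}(T))$ via the inequality $\col_{\le\mu_k}(R)\le R_1$ together with a case analysis at the cell $(k,\mu_k+1)$), which yields $\col_{\le\mu_k}(\pd(T))=\col_{\le\mu_k}(T)$ and hence the conditions of Proposition~\ref{prop:oRSE}. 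You instead build the candidate image $S=\col_{\le\mu_k}(T)\sqcup\pd(\col_{\ge\mu_k+1}(T))$ from scratch, check $S\in\RSE_{k+1}(\lm)$ (with hypothesis $(4)$ supplying the RPP condition), and then identify $\pd(T)=S$ by applying the column-splitting identity of Lemma~\ref{lem:pu} to $S$ with $s=\mu_k$ and invoking Proposition~\ref{prop:inverse}; this delegates all the bumping-path analysis to Lemma~\ref{lem:pu}, where it was already done for the insertion (rather than reverse) direction, and there is no circularity since Lemma~\ref{lem:pu} does not depend on the present lemma. The trade-off: the paper's argument is self-contained at the level of $\pd$ but needs the delicate straightness claim for reverse bumping paths, whereas yours is shorter and reuses existing machinery, at the cost of a membership verification for $S$ that you compress into ``a direct check of Proposition~\ref{prop:oRSE}''; to be complete, that check should also record that the unstarred region of the concatenation is again a partition shape (the extended ordering forbids an unstarred cell to the right of a starred one in the same row, so the hypothesis $(4)$ forces the unstarred column heights to be weakly decreasing across column $\mu_k$) and that rows up to $k+1$ of the first $\mu_k$ columns carry no starred entries (an $\omega$-entry there would have value at least $\mu'_j+1\ge k+1$ but at most $i-1\le k$, the same inequality you already invoke). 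These are routine completions rather than gaps, and the $\mu_k=0$ edge case is handled directly by your base case.
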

\begin{proof}
We will prove the implications $(1)\Rightarrow (2) \Rightarrow (3)\Rightarrow
(4)\Rightarrow (1)$. 

$(1)\Rightarrow (2)$: Let $T'=\pd(T)\in\RSE_{k+1}(\lm)$. Then $T=\pu(T')\in
\pu(\RSE_{k+1}(\lm))$.

$(2)\Rightarrow (3)$: Suppose $T=\pu(T')$ for some $T'\in\RSE_{k+1}(\lm)$. Then
$T' = \pd(T)$. We need to show that for $1\le s\le \mu_k$, 
\begin{align*}
  \col_{\le s}(T') &= \col_{\le s}(T),\\ 
  \col_{\ge s+1}(T') &= \pd(\col_{\ge s+1}(T)).
\end{align*}
By Lemma~\ref{lem:pu}, $T=\pu(T') = \col_{\le s}(T') \sqcup \pu(\col_{\ge
  s+1}(T'))$. This shows that $\col_{\le s}(T)=\col_{\le s}(T')$, which
is the first equality, and $\col_{\ge s+1}(T)=\pu(\col_{\ge s+1}(T'))$,
which is equivalent to the second equality after applying $\pd$. 

$(3)\Rightarrow (4)$: The fact that $\pd(T) = \col_{\le \mu_k}(T) \sqcup
\pd(\col_{\ge \mu_k+1}(T))$ is an RPP (with extended integers as entries) shows
that $\col_{\le \mu_k}(T) \le \pd(\col_{\ge \mu_k+1}(T))$.

$(4)\Rightarrow (1)$: Suppose that $T=(R,E)\in\RSE_{k}(\lm)$ satisfies
$\col_{\le \mu_k}(T) \le \pd(\col_{\ge \mu_k+1}(T))$. To show $\pd(T)\in
\RSE_{k+1}(\lm)$ we must show that $\pd(T)$ satisfies the three conditions in
Proposition~\ref{prop:oRSE}. Since the restriction of $\pd(T)$ to the
$\omega$-entries is exactly the same as that of $T$ with $k^*$ deleted and
$T\in\RSE_{k+1}(\lm)$ satisfies the third condition, so does $\pd(T)$. For the
first two conditions, it is enough to show that the restrictions of $T$ and
$\pd(T)$ to $\mu$ are the same. Since $\mu$ is contained in $\row_{\le
  k}(\lambda) \cup \col_{\ge\mu_k+1}(\lambda)$ and $\row_{\le
  k}(\pd(T))=\row_{\le k}(T)$, it suffices to prove the following equality:
\begin{equation}
  \label{eq:2}
\col_{\le \mu_k}(\pd(T))= \col_{\le \mu_k}(T).  
\end{equation}

To show \eqref{eq:2} we investigate the construction of $\pd(T)$ in
Definition~\ref{def:pd}. Let $c_1<c_2<\dots<c_r$ be the column indices $j$ such
that column $j$ of $E$ does not contain $k$. Since $(R,E)\in\RSE_k(\lm)$, $E$ is
a $\mu$-elegant tableau. Thus for every cell $(i,j)$ of $E$ with $1\le j\le
\mu_k$ we have $E(i,j)\ge\mu'_j+1\ge k+1$. This shows that $E$ has no entries
equal to $k$ in the first $\mu_k$ columns, i.e., $c_j=j$ for all $1\le j\le
\mu_k$.

Recall that in the definition of $\pd(T)$ we apply the reverse RSK algorithm to
$\row_{\ge k}(R)$ starting from the last cell of column $c_j$ for
$j=r,r-1,\dots,1$ in this order. We denote by $P_j$ each inverse bumping path.

\textbf{Claim}: $P_{\mu_k},P_{\mu_k-1},\dots,P_1$ are straight vertical paths.

By the construction of $\pd(T)$ the claim implies \eqref{eq:2}. Hence it suffices
to prove the claim. To this end let $Q$ be the tableau obtained from $\row_{\ge
  k}(R)$ by applying the reverse RSK algorithm to the last cell of column $c_j$
for $j=r,r-1,\dots,\mu_{k}+1$. Note that the same process is applied to
$\row_{\ge k}(\col_{\ge \mu_k+1}(T))$ when we compute $(R_1,E_1)=\pd(\col_{\ge
  \mu_k+1}(T))$. Since $\row_{\ge k+1}(R_1)$ has been shifted down in Step 2 of
the definition of $\pd$, we have
\begin{equation}
  \label{eq:3}
 \row_{\ge k+1}(R_1) = \row_{\ge k}(Q). 
\end{equation}

Suppose that $R_1(k,\mu_k+1)<R_1(k+1,\mu_k+1)$. This means that the entry
$R_1(k,\mu_k+1)$ was bumped during the applications of the reverse RSK
algorithm. Since column $\mu_k+1$ is the leftmost nonempty column of $\col_{\ge
  \mu_k+1}(T)$, the reverse bumping path that pushed this cell must be a
straight vertical path and we must have $c_{\mu_k+1}=\mu_k+1$. Since each path
$P_j$, for $1\le j\le \mu_k+1$, starts from column $j$ and $P_{\mu_{k}+1}$ is a
straight vertical path, by the non-intersecting property of the reverse bumping
paths, the claim follows.

Suppose now that $R_1(k,\mu_k+1)=R_1(k+1,\mu_k+1)$. By the same reasoning as in
the previous paragraph, it is sufficient to show that $P_{\mu_k}$ is a straight
vertical path. By the assumption $\col_{\le \mu_k}(T) \le \pd(\col_{\ge
  \mu_k+1}(T))$, we have $\col_{\le \mu_k}(R)\le R_1$. This together with
\eqref{eq:3} implies that $R(i,\mu_k)\le R_1(i,\mu_k+1) = Q(i-1,\mu_k+1)$ for
all $i\ge k+1$. These inequalities and the assumption
$R_1(k,\mu_k+1)=R_1(k+1,\mu_k+1)$ ensure that $P_{\mu_k}$ is a straight vertical
path. This completes the proof of the claim.
\end{proof}

The following two lemmas will be useful in the next section. 

\begin{lem}\label{lem:pd2}
  Suppose $T\in\RSE_{b}(\col_{\ge c+1}(V/\mu))$, where $b,c$ are nonnegative
  integers, $V$ is a vertical diagram, and $\mu$ is a partition with
  $\mu\subseteq V$ such that $\col_{\ge c+1}(V/\mu)$ is a skew shape and $1\le
  c\le\mu_b$. Then for all $1\le d\le b-1$ and $1\le j\le \mu_{b-1}$, we have
  \begin{align}
    \label{eq:pddT}
    \pu^d(T)&\in \RSE_{b-d}(\col_{\ge c+1}(V/\mu)),\\
    \label{eq:pd col_i}
    \pu^d(T) &= \col_{\le j}(T) \sqcup \pu^d(\col_{\ge j+1}(T)).        
  \end{align}
  In other words \eqref{eq:pd col_i} means that applying $\pu^d$ to $T$ is the same
  thing as applying $\pu^d$ only to the columns of index greater than $j$
  while keeping $\col_{\le j}(T)$ unmodified.
\end{lem}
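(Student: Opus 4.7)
The plan is to induct on $d$, proving \eqref{eq:pddT} and \eqref{eq:pd col_i} simultaneously.

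For the base case $d=1$, I would apply Lemma~\ref{lem:pu} directly to $T \in \RSE_b(\col_{\ge c+1}(V/\mu))$ at target level $b-1$. This immediately yields $\pu(T) \in \RSE_{b-1}(\col_{\ge c+1}(V/\mu))$ and the column-splitting identity for every $1 \le j \le \mu_{b-1}$: for $j \le c$ the identity is trivial, since $\col_{\le j}(T)$ is then empty; for $c+1 \le j \le \mu_{b-1}$ it is the direct statement of Lemma~\ref{lem:pu} in the shape $\col_{\ge c+1}(V/\mu)$ (whose effective inner shape still has row $b-1$ extending to column $\mu_{b-1}$).

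For the inductive step from $d$ to $d+1$, the induction hypothesis gives $\pu^d(T) \in \RSE_{b-d}(\col_{\ge c+1}(V/\mu))$; and since $c \le \mu_b \le \mu_{b-d}$, this $\pu^d(T)$ itself satisfies the hypotheses of the lemma at level $b-d$. Applying the base case to $\pu^d(T)$ yields \eqref{eq:pddT} for $d+1$. For \eqref{eq:pd col_i}, applying the base case to $\pu^d(T)$ (whose target level is $b-d-1$, so the range is $j \le \mu_{b-d-1}$, which covers $j \le \mu_{b-1}$ because $\mu$ is weakly decreasing and $b-d-1 \le b-1$) gives
\[
\pu^{d+1}(T) = \col_{\le j}(\pu^d(T)) \sqcup \pu(\col_{\ge j+1}(\pu^d(T))).
\]
The induction hypothesis for $T$ at parameters $(d,j)$ then identifies $\col_{\le j}(\pu^d(T)) = \col_{\le j}(T)$ and $\col_{\ge j+1}(\pu^d(T)) = \pu^d(\col_{\ge j+1}(T))$, which combine to produce \eqref{eq:pd col_i} for $d+1$.

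The main obstacle I anticipate is careful index bookkeeping. At target level $k$, Lemma~\ref{lem:pu} only guarantees the splitting identity for $s \le \mu_k$, so the monotonicity of $\mu$ is essential to propagate the uniform bound $j \le \mu_{b-1}$ across all $d$ iterations, even though the effective level decreases. A secondary subtlety is confirming that $\col_{\ge j+1}(T)$ forms a valid RSE-tableau of shape $\col_{\ge j+1}(V/\mu)$ so that $\pu^d$ is defined on it; this follows from the preservation of the RPP, SSYT, and $\mu$-elegant conditions under column restriction, together with the hypothesis that $\col_{\ge c+1}(V/\mu)$ is a skew shape (which makes its further column restrictions skew shapes as well).
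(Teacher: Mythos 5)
Your proof is correct and follows essentially the same route as the paper: induction on $d$ with Lemma~\ref{lem:pu} as the base case, applying Lemma~\ref{lem:pu} to $\pu^d(T)$ via \eqref{eq:pddT} together with the inequality $j\le\mu_{b-1}\le\mu_{b-d-1}$, and then substituting the induction hypothesis. Your extra remarks on the trivial range $j\le c$ and on $\col_{\ge j+1}(T)$ being a valid RSE-tableau are just details the paper leaves implicit.
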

\begin{proof}
  Since $\col_{\ge c+1}(V/\mu)$ is a skew shape, applying Lemma~\ref{lem:pu}
  repeatedly gives \eqref{eq:pddT}.

  We prove \eqref{eq:pd col_i} by induction on $d$. If $d=1$, it is just
  Lemma~\ref{lem:pu}. Suppose that \eqref{eq:pd col_i} is true for $1\le d\le
  b-2$ and consider the $d+1$ case. Using Lemma~\ref{lem:pu} with
  \eqref{eq:pddT} and the inequality $j\le\mu_{b-1}\le \mu_{b-d-1}$, we have
  \begin{equation}
    \label{eq:10}
\pu(\pu^d(T)) = \col_{\le j}(\pu^d(T))\sqcup \pu(\col_{\ge j+1}(\pu^d(T))).     
  \end{equation}
Note that the induction hypothesis \eqref{eq:pd col_i} for the $d$ case is
equivalent to
\[
\col_{\le j}(\pu^d(T)) = \col_{\le j}(T), \qquad
\col_{\ge j+1}(\pu^d(T)) = \pu^d(\col_{\ge j+1}(T)). 
\]
Hence \eqref{eq:10} can be written as
\[
  \pu^{d+1}(T) = \col_{\le j}(T)\sqcup \pu(\pu^d(\col_{\ge j+1}(T)))
  = \col_{\le j}(T)\sqcup \pu^{d+1}(\col_{\ge j+1}(T)),
\]
which is \eqref{eq:pd col_i} for the $d+1$ case. 
This completes the proof. 
\end{proof}

\begin{lem}\label{lem:pu2}
  Suppose $T\in\pu^a(\RSE_{b}(\col_{\ge c+1}(V/\mu))$, where $a,b,c$ are
  nonnegative integers, $V$ is a vertical diagram, and $\mu$ is a partition with
  $\mu\subseteq V$ such that $\col_{\ge c+1}(V/\mu)$ is a skew shape and $1\le
  c\le\mu_b$. Then for all $1\le d\le a$ and $1\le j\le \mu_{b-a+d-1}$, we have
  \begin{align}
    \label{eq:11}
  \pd^d(T)&\in\RSE_{b-a+d}(\col_{\ge c+1}(V/\mu)),\\
    \label{eq:pu col_i2}
    \pd^d(T) &= \col_{\le j}(T) \sqcup \pd^d(\col_{\ge j+1}(T)).    
  \end{align}
  In other words \eqref{eq:pu col_i2} means that applying $\pd^d$ to $T$ is the same
  thing as applying $\pd^d$ only to the columns of index greater than $j$
  while keeping $\col_{\le j}(T)$ unmodified.
\end{lem}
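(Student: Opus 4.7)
The plan is to reduce Lemma~\ref{lem:pu2} to Lemma~\ref{lem:pd2} and Lemma~\ref{lem:pd} by exploiting Proposition~\ref{prop:inverse}, which says that $\pd$ and $\pu$ are mutual inverses. Since $T\in\pu^a(\RSE_b(\col_{\ge c+1}(V/\mu)))$, I will first choose the unique $T'\in\RSE_b(\col_{\ge c+1}(V/\mu))$ with $T=\pu^a(T')$; Proposition~\ref{prop:inverse} then gives $\pd^d(T)=\pu^{a-d}(T')$ for every $0\le d\le a$. Applying Lemma~\ref{lem:pd2} to $T'$ with $a-d$ iterations of $\pu$ (the case $d=a$ being tautological) yields $\pu^{a-d}(T')\in\RSE_{b-a+d}(\col_{\ge c+1}(V/\mu))$, which establishes \eqref{eq:11}.

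For the column-splitting identity \eqref{eq:pu col_i2} I would proceed by induction on $d$. For the base case $d=1$, I would write $T=\pu(\pu^{a-1}(T'))$ with $\pu^{a-1}(T')\in\RSE_{b-a+1}(\col_{\ge c+1}(V/\mu))$ by Lemma~\ref{lem:pd2}; this shows $T$ satisfies condition (2) of Lemma~\ref{lem:pd}, so condition (3) of that lemma produces the desired decomposition of $\pd(T)$ for $1\le j\le\mu_{b-a}$. For the inductive step from $d$ to $d+1$, I set $S=\pd^d(T)=\pu^{a-d}(T')$; since $S=\pu(\pu^{a-d-1}(T'))$ and Lemma~\ref{lem:pd2} gives $\pu^{a-d-1}(T')\in\RSE_{b-a+d+1}(\col_{\ge c+1}(V/\mu))$, the pair $S$ meets condition (2) of Lemma~\ref{lem:pd}, and condition (3) supplies
\[
\pd^{d+1}(T)=\pd(S)=\col_{\le j}(S)\sqcup\pd(\col_{\ge j+1}(S))
\]
for $1\le j\le\mu_{b-a+d}$. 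The inductive hypothesis, applicable because $\mu_{b-a+d}\le\mu_{b-a+d-1}$, then gives $\col_{\le j}(S)=\col_{\le j}(T)$ and $\col_{\ge j+1}(S)=\pd^d(\col_{\ge j+1}(T))$ for every such $j$, and substituting yields \eqref{eq:pu col_i2} for $d+1$.

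The main bookkeeping obstacle will be verifying at every stage that the column-splitting bound $\mu_{b-a+d-1}$ is exactly tight enough for the induction: the bound $\mu_{b-a+d}$ produced by applying Lemma~\ref{lem:pd} to $S$ must lie inside the range $j\le\mu_{b-a+d-1}$ covered by the inductive hypothesis, which is guaranteed precisely because $\mu$ is weakly decreasing. Once this chain of bounds is set up correctly, the proof is essentially mechanical, since all of the substantive combinatorial content has been isolated in Lemma~\ref{lem:pd} and Lemma~\ref{lem:pd2}.
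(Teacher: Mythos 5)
Your argument is correct, but it takes a different route from the paper for the column-splitting identity \eqref{eq:pu col_i2}. Your treatment of \eqref{eq:11} coincides with the paper's: both pass to a preimage $T'$ with $T=\pu^a(T')$ and use Lemma~\ref{lem:pd2} together with the fact that $\pu$ and $\pd$ are mutually inverse. For \eqref{eq:pu col_i2}, however, the paper does not induct and does not use Lemma~\ref{lem:pd} at all: it applies Lemma~\ref{lem:pd2} directly to $T'':=\pd^d(T)\in\RSE_{b-a+d}(\col_{\ge c+1}(V/\mu))$ (membership coming from \eqref{eq:11}), obtaining $T=\pu^d(T'')=\col_{\le j}(T'')\sqcup\pu^d(\col_{\ge j+1}(T''))$ for $j\le\mu_{b-a+d-1}$, and then reads off the two column equalities and applies $\pd^d$ to the second one. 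You instead run a fresh induction on $d$, using at each step the implication $(2)\Rightarrow(3)$ of Lemma~\ref{lem:pd} to split off a single application of $\pd$, and the monotonicity $\mu_{b-a+d}\le\mu_{b-a+d-1}$ to feed the result into the inductive hypothesis; this is sound, and in effect re-performs on the image side the induction that the paper has already packaged inside Lemma~\ref{lem:pd2}, so your proof is somewhat longer but needs only the one-step splitting. One small bookkeeping point (present equally in the paper's own use of Lemma~\ref{lem:pu} on restricted shapes): Lemma~\ref{lem:pd} is stated for shapes $\lm$, so when you invoke it for the shape $\col_{\ge c+1}(V/\mu)$ you are implicitly using that its inner shape in row $k$ still ends at column $\mu_k$ (valid since $\mu_k\ge\mu_b\ge c$ for the levels $k\le b$ that occur) and that the splitting at columns of index at most $c$ is vacuous; it is worth saying this explicitly, but it does not affect the correctness of your argument.
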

\begin{proof}
  Since $\pu$ and $\pd$ are inverses of each other, the assumption
  $T\in\pu^a(\RSE_{b}(\col_{\ge c+1}(V/\mu))$ together with Lemma~\ref{lem:pd2}
  implies that for all $1\le d\le a$,
  \[
    \pd^d(T)\in \pu^{a-d}(\RSE_{b}(\col_{\ge c+1}(V/\mu))
    \subseteq\RSE_{b-a+d}(\col_{\ge c+1}(V/\mu)),
  \]
  which shows \eqref{eq:11}. For the second statement, let $T'=\pd^d(T)$. Then
  \eqref{eq:pu col_i2} is equivalent to
  \begin{equation}
    \label{eq:12}
    \col_{\le j}(T') = \col_{\le j}(T),\qquad
    \col_{\ge j+1}(T') = \pd^d(\col_{\ge j+1}(T)).
  \end{equation}
Since $T'=\pd^d(T)\in\RSE_{b-a+d}(\col_{\ge c+1}(V/\mu))$, by Lemma~\ref{lem:pd2},
\[
T=\pu^d(T') = \col_{\le j}(T')\sqcup \pu^d(\col_{\ge j+1}(T')). 
\]
This shows that
\[
    \col_{\le j}(T) = \col_{\le j}(T'),\qquad
    \col_{\ge j+1}(T) = \pu^d(\col_{\ge j+1}(T')).
\]
By taking $\pd^d$ in each side of the second equation we obtain \eqref{eq:12},
completing the proof.
\end{proof}

Recall that at the end of the previous section we showed that
$\pu^{\ell-1}:\RSE_\ell(\lambda)\to\RSE_1(\lambda)$ is a weight-preserving
bijection and 
\[
  \gg_\lambda(x;t) = \sum_{R\in\RPP(\lambda)} \wt(R)=
  \sum_{T\in\RSE_\ell(\lambda)} \wt(T)= \sum_{T\in\RSE_1(\lambda)} \wt(T).
\]
For the skew shape case the map $\pu:\RSE_k(\lm)\to \RSE_{k-1}(\lm)$ is not a
bijection but just an injection.

\begin{prop}\label{prop:wpinj}
  Let $\lambda$ and $\mu$ be partitions with $\mu\subseteq\lambda$ and
  $\ell(\lambda)=\ell$. Then, for $2\le k\le \ell$, the map $\pu:\RSE_k(\lm)\to
  \RSE_{k-1}(\lm)$ is a weight-preserving injection. In other words,
  $\pu:\RSE_k(\lm)\to \pu(\RSE_k(\lm))$ is a weight-preserving bijection.
\end{prop}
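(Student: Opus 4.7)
The plan is to deduce this proposition by restricting a bijection that is already established in Proposition~\ref{prop:inverse} from the non-skew setting. First I would observe that Proposition~\ref{prop:inverse} extends verbatim from $\RSE_k(\lambda)$ to $\oRSE_k(\lambda)$: Definitions~\ref{def:pu} and \ref{def:pd} invoke only the RSK algorithm and the comparison of entries, both of which make sense over any totally ordered alphabet. The Lam--Pylyavskyy argument therefore goes through without change once the alphabet is enlarged to the extended integers in \eqref{eq:overline1}. Moreover, since the weight function only counts positive-integer entries of $R$ (the negative entries $\overline{i}$ sit in the fixed shape $\mu$ and contribute nothing to any $a_i$ or $b_i$) together with the entries of $E$, the same proof shows that
$\pu:\oRSE_k(\lambda)\to\oRSE_{k-1}(\lambda)$
is a weight-preserving bijection with inverse $\pd$.

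Next, under the identification $\RSE_k(\lm)\subseteq\oRSE_k(\lambda)$ via $T\leftrightarrow\overline{T}$, Lemma~\ref{lem:pu} guarantees that $\pu$ carries $\RSE_k(\lm)$ into $\RSE_{k-1}(\lm)$. The restriction of an injective (respectively, weight-preserving) map to any subset of its domain is again injective (respectively, weight-preserving), so the restricted map $\pu:\RSE_k(\lm)\to\RSE_{k-1}(\lm)$ is automatically a weight-preserving injection. The second assertion---that $\pu$ yields a weight-preserving bijection onto its image $\pu(\RSE_k(\lm))$---is then the tautological fact that any injection is a bijection onto its image.

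In this plan there is essentially no new combinatorial content; the only point that requires verification is the assertion that Proposition~\ref{prop:inverse} and its proof transfer from $\RSE_k(\lambda)$ to $\oRSE_k(\lambda)$. I expect this to be the most delicate step only in the sense of bookkeeping: one must check that the RSK bumping and reverse-bumping never mixes the negative block (the copy of $\mu$ filled with $\overline{1},\overline{2},\dots$) with the positive-integer block, and that the weight statistics on $R$ ignore the negative entries. Both facts are immediate from \eqref{eq:overline1} and the definition of $\wt$, but they are the only genuinely new verifications required.
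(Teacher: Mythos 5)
Your argument is essentially the paper's own proof: the paper disposes of this proposition in one line by citing Proposition~\ref{prop:inverse} together with Lemma~\ref{lem:pu}, exactly the two ingredients you combine. The extra verifications you flag (that Lam--Pylyavskyy's bijection and the weight statistics transfer verbatim to the extended alphabet $\oRSE_k(\lambda)$, with the negative entries inert) are precisely what the paper leaves implicit when it declares that $\pu$ and $\pd$ ``are applied to $\oRSE_k(\lambda)$'' using the ordering \eqref{eq:overline1}, so your write-up is a correct, slightly more explicit version of the same route.
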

\begin{proof}
  This follows from Proposition~\ref{prop:inverse} and Lemma~\ref{lem:pu}.
\end{proof}

The fact that $\pu:\RSE_k(\lm)\to \RSE_{k-1}(\lm)$ is an injection can still be
used to give a different expression for $\gg_\lm(x;t)$.

\begin{prop}\label{prop:glm1}
  Let $\lambda$ and $\mu$ be partitions with $\mu\subseteq\lambda$ and
  $\ell(\lambda)=\ell$. Then 
\[
\gg_\lm(x;t)  = \sum_{T\in \pu^{\ell-1}(\RSE_\ell(\lm))}\wt(T).
\]
\end{prop}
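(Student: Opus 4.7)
The plan is to chain together the two facts already established: first, that $\RSE_\ell(\lm)$ is (by identification) the same as $\RPP(\lm)$, so that the definition of $\gg_\lm(x;t)$ immediately gives $\gg_\lm(x;t) = \sum_{T\in\RSE_\ell(\lm)}\wt(T)$; and second, that Proposition~\ref{prop:wpinj} provides, for each $2\le k\le \ell$, a weight-preserving injection $\pu:\RSE_k(\lm)\to\RSE_{k-1}(\lm)$.

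First, I would iterate Proposition~\ref{prop:wpinj}: composing the weight-preserving injections $\pu$ successively yields a weight-preserving injection
\[
  \pu^{\ell-1}:\RSE_\ell(\lm)\longrightarrow \RSE_1(\lm),
\]
which, viewed as a map onto its image, is a weight-preserving bijection
\[
\pu^{\ell-1}:\RSE_\ell(\lm)\longrightarrow \pu^{\ell-1}(\RSE_\ell(\lm)).
\]
Summing weights across this bijection gives
\[
\sum_{T\in\RSE_\ell(\lm)}\wt(T)\;=\;\sum_{T\in \pu^{\ell-1}(\RSE_\ell(\lm))}\wt(T).
\]

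Then I would invoke the identification $\RSE_\ell(\lm)=\RPP(\lm)$ (made explicit right after the definition of skew RSE-tableaux) and the very definition of $\gg_\lm(x;t)$:
\[
\gg_\lm(x;t) \;=\;\sum_{R\in\RPP(\lm)}\wt(R)\;=\;\sum_{T\in\RSE_\ell(\lm)}\wt(T).
\]
Combining the two displayed equalities yields the claim.

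There is essentially no obstacle: once Proposition~\ref{prop:wpinj} (which rests on Proposition~\ref{prop:inverse} and Lemma~\ref{lem:pu}) is in hand, this proposition is a one-line corollary. The only thing to be careful about is that the weight is preserved at every step of the iteration, and this is guaranteed because $\pu$ is weight-preserving on the ambient set $\oRSE_k(\lambda)$ by Proposition~\ref{prop:inverse} and the weight of $T\in\RSE_k(\lm)$ is defined using the same formula $\wt(T)=\wt(R)t_E$ as in the straight-shape case.
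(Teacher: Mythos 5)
Your proposal is correct and matches the paper's own proof: both identify $\RSE_\ell(\lm)$ with $\RPP(\lm)$ to rewrite $\gg_\lm(x;t)$ as a sum over $\RSE_\ell(\lm)$, and then iterate Proposition~\ref{prop:wpinj} to get the weight-preserving bijection $\pu^{\ell-1}:\RSE_\ell(\lm)\to\pu^{\ell-1}(\RSE_\ell(\lm))$. No gaps.
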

\begin{proof}
  By the identification of $\RPP(\lm)$ and $\RSE_\ell(\lm)$,
  \[
\gg_\lm(x;t) = \sum_{R\in \RPP(\lm)}\wt(R)
= \sum_{T\in \RSE_\ell(\lm)}\wt(T).
\]
Applying Proposition~\ref{prop:wpinj} repeatedly we obtain that
$\pu^{\ell-1}:\RSE_\ell(\lm)\to \pu^{\ell-1}(\RSE_\ell(\lm))$ is a weight-preserving
bijection. Therefore
\[
  \sum_{T\in \RSE_\ell(\lm)}\wt(T) =\sum_{T\in \pu^{\ell-1}(\RSE_\ell(\lm))}\wt(T) ,
\]
and the proof follows. 
\end{proof}

By Propositions~\ref{prop:snc} and \ref{prop:glm1}, in order to prove
Theorem~\ref{thm:main} it is sufficient to show the following proposition whose
proof will be given in the next section.

\begin{prop}\label{prop:snc=pu}
  Let $\lambda$ and $\mu$ be partitions with $\mu\subseteq\lambda$, $\ell(\lambda)=
  \ell$, and $\ell(\lambda')=n$. Then
\[
    \sum_{\pp\in \lsnc} \wt(\pp) = \sum_{T\in \pu^{n-1}(\RSE_\ell(\lm))}\wt(T).
  \]
\end{prop}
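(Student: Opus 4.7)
The plan is to construct a weight-preserving, sign-reversing involution $\Phi$ on $\lsnc$ whose fixed-point set is in weight-preserving bijection with $\pu^{n-1}(\RSE_\ell(\lm))$. The overall strategy parallels the proof of Proposition~\ref{prop:snc}, but the classical tail-exchange must be replaced by a variant that respects the block structure of $\mu$ and the column-factorization behavior of $\pu$ and $\pd$.

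First, I would use $\Tab$ together with Lemma~\ref{lem:VT} to rewrite each $\pp \in \lsnc$ of type $\pi$ as a vertical tableau of shape $\pi(\lambda)/\mu$. Under this identification, the semi-noncrossing condition becomes the requirement that, for every block $D_k = \{d_k+1,\dots,d_{k-1}\}$, the columns indexed by $D_k$ form an RSE-tableau of the appropriate level, since the paths in $D_k$ all start at height $\omega+d_k$. Iterating Lemmas~\ref{lem:pd2} and~\ref{lem:pu2} blockwise, I would dually characterize $\pu^{n-1}(\RSE_\ell(\lm))$ as the configurations satisfying a cascade of compatibility conditions of the form $\col_{\le d_k}(T) \le \pd^{?}(\col_{\ge d_k+1}(T))$ at each block boundary, in the spirit of Lemma~\ref{lem:pd}(4), where the exponent is determined by the multiplicities $m_1,\dots,m_k$.

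The involution $\Phi$ is then defined by scanning the blocks $D_1,\dots,D_r$ and locating the first block $D_k$ where the compatibility condition fails. At that block, the failure pinpoints a canonical pair $p_i, p_j$ of intersecting paths whose indices lie in distinct blocks; tail-exchanging them at the distinguished intersection produces a new $n$-path $\pp' \in \lsnc$ whose type differs from $\type(\pp)$ by a transposition, so that $\wt(\pp') = -\wt(\pp)$. The fixed points of $\Phi$ are precisely those $\pp$ whose associated tableau satisfies every block compatibility, which by the characterization above are exactly the elements of $\pu^{n-1}(\RSE_\ell(\lm))$, decoded back through $\Tab^{-1}$.

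The main obstacle is verifying that this local swap is genuinely an involution. The prescription for the canonical intersection must be symmetric, and one must further confirm that modifying the paths inside a single block does not retroactively alter the compatibility status at another block. Both requirements rely essentially on the column-by-column factorizations provided by Lemmas~\ref{lem:pd2} and~\ref{lem:pu2}: they guarantee that the $\pu^d$ and $\pd^d$ behavior on the columns to the right of any block boundary is independent of the content to the left, which is precisely what makes the local swap at $D_k$ reversible by its own inverse and invisible to the other blocks.
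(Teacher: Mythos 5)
Your overall architecture is the paper's: read $\pp\in\lsnc$ through $\Tab$ and Lemma~\ref{lem:VT} as a vertical tableau whose blocks $\col_{D_k}$ are level-$1$ RSE-tableaux (Proposition~\ref{prop:snc RSE1}), characterize the image $\pu^{n-1}(\RSE_\ell(\lm))$ by a cascade of boundary compatibilities built from Lemmas~\ref{lem:pd2} and~\ref{lem:pu2} (this is exactly the chain $U_1,U_2,\dots$ of Definition~\ref{def:Phi} and Lemmas~\ref{lem:U_i}, \ref{lem:U_i2}, \ref{lem:fixed points}), and cancel everything else by a sign-reversing move at the first failing block. The genuine gap is in how you produce the partner: you claim the failure at block $D_k$ ``pinpoints a canonical pair $p_i,p_j$ of intersecting paths'' in distinct blocks and that exchanging their tails yields $\pp'\in\lsnc$. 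Neither half of this is true for the \emph{original} paths. First, when $T_{k+1}\not\le\pd^{m_k}(U_k)$ the original paths need not intersect at all: the violation only becomes visible after the columns to the right of the boundary have been reorganized by the iterated Lam--Pylyavskyy maps. Already for $\lambda=(2,2)$, $\mu=(1)$ the tableau of Figure~\ref{fig:not in image} lies in $\RSE_1(\lm)\setminus\pu(\RSE_2(\lm))$, so its $2$-path must be cancelled by $\Phi$, yet that $2$-path is noncrossing; an involution whose only move is a tail exchange of originally intersecting paths would fix it (and all of $\lnc$), which is too large a fixed-point set. Second, even when the original paths do cross, $i$ and $j$ in distinct blocks have $\mu'_i\ne\mu'_j$, so swapping tails moves diagonal steps into the forbidden band of the other path and the result generally leaves $\LL$ altogether --- precisely the obstruction of Figure~\ref{fig:forbidden} that rules out the standard Lindstr\"om--Gessel--Viennot exchange and is the reason this theorem needs a new argument.

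What is missing is the conjugation of the tail exchange by the maps $\pu,\pd$. In the paper one forms $T_{k+1}\sqcup\pd^{m_k}(U_k)$, restricts to rows $\ge M_k+1$, and reinterprets this via $\Tab^{-1}$ as an $n$-path $\mathbf{q}$ with respect to the modified inner shape $\gamma$ (Step 3-1 of Definition~\ref{def:Phi}); there the failure of $\le$ forces a crossing among the relevant $q$-paths, the exchange is legitimate because those paths share a common restriction, and only after exchanging does one transport the right-hand part back with $\pu^{M_k}$ to obtain $\pp'$, with Lemma~\ref{lem:sign-reversing} verifying $\pp'\in\lsnc$ and that the type changes by a transposition (hence $\wt(\pp')=-\wt(\pp)$, since all the maps involved are weight-preserving). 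Your appeal to the column factorizations of Lemmas~\ref{lem:pd2} and~\ref{lem:pu2} is indeed the right tool for the involutivity check (it is how Lemma~\ref{lem:involution} shows the chain for $\pp'$ fails at the same block and reconstructs $\pp$), but it cannot substitute for this transform--swap--transform-back construction: without it the map is not even defined on $\lsnc$ outside the intended fixed points, and the claims $\pp'\in\lsnc$ and $\wt(\pp')=-\wt(\pp)$ have no footing.
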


\section{Sign-reversing involution}
\label{sec:sign-revers-invol}

In this section we define a sign-reversing involution on $\lsnc$ to prove
Proposition~\ref{prop:snc=pu}. Recall Notation~\ref{notation}. For any tableau
$Q$ denote by $\col_{D_k}(Q)$ the part of $Q$ consisting of column $j$ for all
$j\in D_k$. The following proposition is an immediate consequence of
Proposition~\ref{prop:nc2}.

\begin{prop}\label{prop:snc RSE1}
  Let $\pp=(p_1,\dots,p_n)\in\LL$ and $T=\Tab(\pp)$. Then $\pp\in\lsnc$ if and
  only if each $\col_{D_k}(T)$ (with row $1$ marked) is an RSE-tableau of level
  $1$ (and of some skew shape).
\end{prop}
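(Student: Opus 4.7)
The plan is to reduce the claim to Proposition~\ref{prop:nc2} applied independently to each block $D_k$. Fix $k$; the key observation is that all indices $i \in D_k$ share the common value $\mu'_i = M_{k-1}$, so the paths $(p_i)_{i \in D_k}$ all carry the same forbidden zone between heights $\omega$ and $\omega + M_{k-1}$ for diagonal steps, and their starting $x$-coordinates $M_{k-1} + n - i$ form a strictly decreasing consecutive sequence of length $|D_k|$. After a uniform horizontal translation, this sub-family looks exactly like an element of $\LL$ for an auxiliary problem of size $|D_k|$, with inner shape the rectangle of width $|D_k|$ and height $M_{k-1}$ and outer shape $\tilde\lambda$ whose column heights $\tilde\lambda'_l = \lambda'_{\pi_{d_k+l}} - \pi_{d_k+l} + (d_k+l)$ are exactly those of $\col_{D_k}(\pi(\lambda))$, where $\pi = \type(\pp)$. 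A direct check of the $\Tab$ formula shows that, up to relabeling the $|D_k|$ sub-columns by $D_k$, the map $\Tab$ applied to the translated sub-family produces precisely $\col_{D_k}(T)$.

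For the forward direction, if $\pp \in \lsnc$, then $(p_i)_{i \in D_k}$ is non-crossing; since the starting $x$-coordinates are strictly decreasing in $i$, non-crossing forces the ending $x$-coordinates $\lambda'_{\pi_i} + n - \pi_i$ to also be strictly decreasing in $i \in D_k$, which implies that $\tilde\lambda'_l$ is weakly decreasing in $l$, so $\tilde\lambda$ is a genuine partition. Proposition~\ref{prop:nc2} applied to the sub-problem then yields that $\col_{D_k}(T)$ is an RSE-tableau of level $1$ of shape $\tilde\lambda/\tilde\mu$. For the reverse direction, suppose each $\col_{D_k}(T)$ is an RSE-tableau of level $1$ of some skew shape. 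The inner shape is forced by the negative entries, which fill exactly the $\mu$-cells in each column indexed by $D_k$, to be the $M_{k-1}$-by-$|D_k|$ rectangle, and the outer shape is determined by the cells of $\col_{D_k}(T)$. Proposition~\ref{prop:nc2} applied in reverse pulls the RSE-tableau back to a non-crossing element of the auxiliary problem, which under the inverse translation is exactly $(p_i)_{i \in D_k}$; since this holds for every $k$, we obtain $\pp \in \lsnc$.

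The main obstacle is the bookkeeping required to identify the original sub-family of paths with an element of the auxiliary $\LL$: one must verify that the horizontal translation sends starting points to the correct $\tilde\LL$-positions, that the diagonal-step restrictions (which depend on the starting column index $i$, not on the endpoint) transfer cleanly to the sub-problem, and that the $\Tab$ formula under translation recovers $\col_{D_k}(T)$ precisely. Once this identification is established, Proposition~\ref{prop:nc2} delivers both directions of the claim.
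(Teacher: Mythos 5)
Your overall strategy, recentring each block $D_k$ and invoking Proposition~\ref{prop:nc2} for an auxiliary shape with a rectangular inner part, is exactly the route the paper intends (it deduces Proposition~\ref{prop:snc RSE1} directly from Proposition~\ref{prop:nc2}), and the bookkeeping that rests on $\mu'_i=M_{k-1}$ for $i\in D_k$ (common forbidden band, consecutive starting points, weakly decreasing block column heights in the noncrossing case) is fine. The gap is the assertion that the diagonal-step restrictions ``transfer cleanly'', so that the translated sub-family is an element of the auxiliary $\LL$. The band restriction does transfer, precisely because $\mu'$ is constant on $D_k$; but the upper restriction of your auxiliary problem is tied to the block's own column heights $\tilde{\lambda}'_l=\lambda'_{\pi_{d_k+l}}-\pi_{d_k+l}+(d_k+l)$, which depend on $\pi=\type(\pp)$, whereas the original path $p_i$ is only constrained by $\lambda'$ at a fixed index (and note that \eqref{eq:e} forces that constraint to be governed by the \emph{endpoint} index $\pi_i$, contrary to your parenthetical; under either reading the mismatch persists). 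These bounds differ whenever $\pi_i>i$, and within-block noncrossing gives no control over this, since blocks can be singletons. Concretely, for $\lambda=(2,2,2,2,1)$, $\mu=(1)$, $n=2$, $\type(\pp)=(2,1)$, let $p_1$ run from $(2,0)$ to $(4,2\omega)$ with its two diagonal steps at heights $\omega+2$ and $\omega+3$ (legal in $\LL(1,2)$) and let $p_2$ be any legal path from $(0,0)$ to $(6,2\omega)$: both blocks are singletons, so $\pp\in\lsnc$, yet column $1$ of $T$ reads $\overline{1},2^*,3^*$, and the $2^*$ in row $2$ violates the elegant-tableau bound $E(i,j)\le i-1$ (the upper half of condition (3) in Proposition~\ref{prop:oRSE}). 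So the translated block is not in your auxiliary $\LL$, Proposition~\ref{prop:nc2} cannot be invoked, and your forward direction never verifies the starred-entry upper bound at all.

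What within-block noncrossing does give, and what your reduction correctly captures, is everything else: strict increase down columns, weak increase along rows across the columns of $D_k$, the rectangular inner shape, a partition outer shape for the block, and the lower bound $a\ge M_{k-1}+1$ on starred entries coming from the common band. The reverse direction of your argument is essentially sound, since it only needs ``weak row increase implies the two adjacent paths do not cross'' together with the bijectivity of $\Tab$ on $\L(\alpha,\beta)$, which imposes no band or height restrictions. But the forward implication, as you have set it up, fails at the auxiliary-membership step, and the example above shows this is not a matter of missing routine bookkeeping: the starred upper bound is simply not a consequence of $\pp\in\lsnc$ together with membership in $\LL$, so it must either be argued from a different reading of the definitions or the statement must be interpreted with only the weaker list of conditions that the subsequent constructions actually use. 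A complete write-up has to confront this point rather than fold it into ``transfers cleanly''.
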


We now define the sign-reversing involution $\Phi$ on $\lsnc$. See
Section~\ref{sec:example-phi} for a concrete example of the map $\Phi$.

\begin{defn}[The sign-reversing involution $\Phi$ on $\lsnc$]
  \label{def:Phi}
  Let $\pp\in\lsnc$. Then $\Phi(\pp)$ is defined as follows. Here we use the
  letters defined in Notation~\ref{notation}.
  \begin{description}
  \item[Step 1] Suppose $T=\Tab(\pp)\in \VT(\pi(\lambda)/\mu)$ and write
    $T=T_{r+1}\sqcup \dots\sqcup T_2\sqcup T_1$, where each $T_i=\col_{D_i}T$ is
    considered as an RSE-tableau of level $1$.
  \item[Step 2] Let $U_1=T_1$. For $i=1,2,\dots,r$, if $U_{i}$ has been
    defined and $T_{i+1}\le \pd^{m_i}(U_{i})$, define $U_{i+1}$ to be the
    RSE-tableau $T_{i+1}\sqcup \pd^{m_i}(U_{i})$ with level $M_i+1$.
  \item[Step 3] If $U_{r+1}$ is defined, set $\Phi(\pp)=\pp$. Otherwise, let $k$ be
    the smallest integer such that $T_{k+1}\not\le \pd^{m_k}(U_{k})$. In order
    to define $\Phi(\pp)$ we proceed as follows.
    \begin{description}
    \item[Step 3-1] Let $\gamma=(\gamma_1,\dots,\gamma_\ell)$ be the partition
      defined by
      \[
\gamma_i = 
        \begin{cases}
          \lambda_i, & \mbox{if $1\le i \le M_k$,}\\
          \min(\lambda_i,d_k), & \mbox{if $M_k+1\le i\le \ell$}.
        \end{cases}
      \]      
     Considering $\row_{\ge
        M_k+1}(T_{k+1}\sqcup \pd^{m_k}(U_{k}))$ as an element in
      $\VT(\pi(\lambda)/\gamma)$, let $\mathbf{q}=(q_1,\dots,q_n)$ be the $n$-path such
      that
\[
  \Tab(\mathbf{q})=\row_{\ge M_k+1}(T_{k+1}\sqcup \pd^{m_k}(U_{k})).
\]
\item[Step 3-2] Let $s$ be the largest integer such that column $s$ of
  $\row_{\ge M_k+1}(\pd^{m_k}(U_{k}))$ is nonempty. Choose the intersection
  point $(a,b)$ of $q_i$ and $q_j$ for $d_{k+1}+1\le i, j\le s$ in such a way
  that $(b,a)$ is the largest in the lexicographic order. Let $q'_i$ and $q'_j$
  be the paths obtained from $q_i$ and $q_j$, respectively, by exchanging their
  subpaths after the intersection $(a,b)$. Define $\mathbf{q'}$ to be the
  $n$-path $\mathbf{q}$ in which $q_i$ and $q_j$ are replaced by $q'_i$ and
  $q'_j$, respectively.
\item[Step 3-3] Note that $\row_{D_k}(\Tab(\mathbf{q}))=\row_{\ge
    M_k+1}(T_{k+1})$ and $\row_{\ge d_k+1}(\Tab(\mathbf{q}))=\row_{\ge
    M_k+1}(\pd^{m_k}(U_{k}))$. Let $\widetilde{T}_{k+1}$ be the RSE-tableau of
  level $1$ obtained from $T_{k+1}$ by replacing $\row_{D_k}(\Tab(\mathbf{q}))$
  by $\row_{D_k}(\Tab(\mathbf{q'}))$. Let $\widetilde{U}_{k}$ be the RSE-tableau
  of level $M_k+1$ obtained from $\pd^{m_k}(U_k)$ by replacing replacing
  $\row_{\ge d_k+1}(\Tab(\mathbf{q}))$ by $\row_{\ge d_k+1}(\Tab(\mathbf{q'}))$.
  See Figure~\ref{fig:qq'} for an illustration of the construction of
  $\widetilde{T}_{k+1}$ and $\widetilde{U}_{k}$. Let
  \[
    T'=T_r\sqcup \cdots \sqcup T_{k+2}\sqcup \widetilde{T}_{k+1}\sqcup
    \pu^{M_k}(\widetilde{U}_{k}).
  \]
  Finally, define $\Phi(\pp)$ to be the $n$-path $\pp'$ satisfying
  $\Tab(\pp') = T'$. 
    \end{description}
  \end{description}
\end{defn}

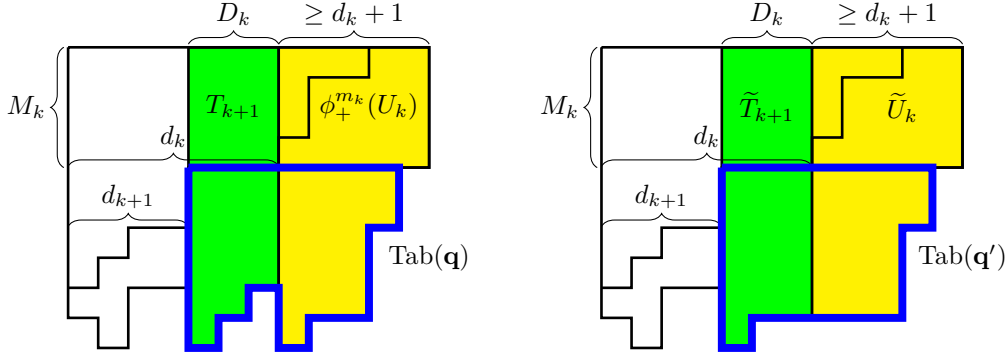
\begin{figure}
  \centering
   \begin{tikzpicture}[scale=0.4](0,0)(10,11)
\draw[line width=1pt,fill=green] (4,10)--++(0,-10)--++(1,0)--++(0,1)--++(1,0)--++(0,1)
--++(1,0)--++(0,8)--++(-3,0);
\draw[line width=1pt,fill=yellow] (7,10)--++(0,-10)--++(1,0)--++(0,1)
--++(2,0)--++(0,3)--++(1,0)--++(0,2)--++(1,0)--++(0,4)--++(-5,0);

\draw[line width=1pt](0,10)--++(0,-9)--++(1,0)
--++(0,-1)--++(1,0)--++(0,2)--++(2,0)--++(0,-2)--++(1,0)--++(0,1)--++(1,0)--++(0,1)
--++(1,0)--++(0,-2)--++(1,0)--++(0,1)--++(2,0)--++(0,3)--++(1,0)
--++(0,2)--++(1,0)--++(0,4)--++(-12,0);

\draw[line width=1pt](0,2)--++(1,0)--++(0,1)--++(1,0)--++(0,1)--++(2,0)
--++(0,2)--++(3,0)--++(0,1)--++(1,0)--++(0,2)--++(2,0)--++(0,1);

\draw [decorate,decoration={brace,amplitude=5pt},xshift=-4pt,yshift=0pt]
(0,6) -- (0,10) node [midway,xshift=-0.5cm] {$M_k$}; 
\draw [decorate,decoration={brace,amplitude=5pt},yshift=1pt,yshift=0pt]
(0,6) -- (7,6) node [midway,yshift=0.4cm] {$d_{k}$}; 
\draw [decorate,decoration={brace,amplitude=5pt},yshift=1pt,yshift=0pt]
(0,4) -- (4,4) node [midway,yshift=0.4cm] {$d_{k+1}$}; 

\draw [decorate,decoration={brace,amplitude=5pt},yshift=3pt]
(4,10) -- (7,10) node [midway,yshift=0.4cm] {$D_k$}; 
\draw [decorate,decoration={brace,amplitude=5pt},yshift=3pt]
(7,10) -- (12,10) node [midway,yshift=0.4cm] {$\ge d_k+1$}; 

\node at (5.5,8) {$T_{k+1}$};
\node at (10,8) {$\pd^{m_k}(U_{k})$};
\node at (12,3) {$\Tab(\mathbf{q})$};

\draw[blue,line width=3pt] (4,6)--++(0,-6)--++(1,0)--++(0,1)
--++(1,0)--++(0,1)--++(1,0)
--++(0,-2)--++(1,0)--++(0,1)--++(2,0)--++(0,3)--++(1,0)--++(0,2)--++(-7,0);
  \end{tikzpicture}\qquad
   \begin{tikzpicture}[scale=0.4](0,0)(10,11)
\draw[line width=1pt,fill=green] (4,10)--++(0,-10)--++(1,0)--++(0,1)--++(2,0)
--++(0,9)--++(-3,0);
\draw[line width=1pt,fill=yellow] (7,10)--++(0,-9)
--++(3,0)--++(0,3)--++(1,0)--++(0,2)--++(1,0)--++(0,4)--++(-5,0);

\draw[line width=1pt](0,10)--++(0,-9)--++(1,0)
--++(0,-1)--++(1,0)--++(0,2)--++(2,0)--++(0,-2)--++(1,0)--++(0,1)
--++(5,0)--++(0,3)--++(1,0)
--++(0,2)--++(1,0)--++(0,4)--++(-12,0);

\draw[line width=1pt](0,2)--++(1,0)--++(0,1)--++(1,0)--++(0,1)--++(2,0)
--++(0,2)--++(3,0)--++(0,1)--++(1,0)--++(0,2)--++(2,0)--++(0,1);

\draw [decorate,decoration={brace,amplitude=5pt},xshift=-4pt,yshift=0pt]
(0,6) -- (0,10) node [midway,xshift=-0.5cm] {$M_k$}; 
\draw [decorate,decoration={brace,amplitude=5pt},yshift=1pt,yshift=0pt]
(0,6) -- (7,6) node [midway,yshift=0.4cm] {$d_{k}$}; 
\draw [decorate,decoration={brace,amplitude=5pt},yshift=1pt,yshift=0pt]
(0,4) -- (4,4) node [midway,yshift=0.4cm] {$d_{k+1}$}; 

\draw [decorate,decoration={brace,amplitude=5pt},yshift=3pt]
(4,10) -- (7,10) node [midway,yshift=0.4cm] {$D_k$}; 
\draw [decorate,decoration={brace,amplitude=5pt},yshift=3pt]
(7,10) -- (12,10) node [midway,yshift=0.4cm] {$\ge d_k+1$}; 

\node at (5.5,8) {$\widetilde{T}_{k+1}$};
\node at (10,8) {$\widetilde{U}_{k}$};
\node at (12,3) {$\Tab(\mathbf{q'})$};

\draw[blue,line width=3pt] (4,6)--++(0,-6)--++(1,0)--++(0,1)
--++(5,0)--++(0,3)--++(1,0)--++(0,2)--++(-7,0);
  \end{tikzpicture}
  \caption{The construction of $\widetilde{T}_{k+1}$ and $\widetilde{U}_{k}$.
    The tableau $\widetilde{T}_{k+1}\sqcup\widetilde{U}_{k}$ is obtained from
    $T_{k+1}\sqcup\pd^{m_k}(U_k)$ by replacing $\Tab(\mathbf{q})$ by
    $\Tab(\mathbf{q'})$.}
  \label{fig:qq'}
\end{figure}

The main theorem in this section is as follows.

\begin{thm}\label{thm:involution}
  The map $\Phi$ is a sign-reversing involution on $\lsnc$ whose fixed point set is
  \[
\left\{\pp\in\lsnc: \Tab(\pp)\in \pu^{\ell-1}(\RSE_\ell(\lm))\right\}.
  \]
\end{thm}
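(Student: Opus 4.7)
The plan is to verify four properties in sequence: (i) $\Phi(\pp)\in\lsnc$, so $\Phi$ is well-defined; (ii) $\Phi$ is sign-reversing on non-fixed points; (iii) $\Phi\circ\Phi=\mathrm{id}$; (iv) the fixed points are exactly those $\pp$ with $\Tab(\pp)\in\pu^{\ell-1}(\RSE_\ell(\lm))$. Throughout, I would rely on Proposition~\ref{prop:snc RSE1} to translate between semi-noncrossing $n$-paths and RSE-tableaux of level $1$, and on Lemmas~\ref{lem:pd2} and \ref{lem:pu2} to commute the iterated maps $\pu^d$ and $\pd^d$ with the column-restriction operators $\col_{\le j}$ and $\col_{\ge j+1}$.

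For (i), the blocks $\col_{D_j}(T')$ for $j>k+1$ coincide with $T_j$ and are already RSE-tableaux of level $1$. The substantive check is that $\widetilde{U}_k$ is still an RSE-tableau of level $M_k+1$ on the appropriate shape: Step~3-2 alters only rows $\ge d_k+1$, a region strictly above the $\mu$-cells in the modified columns (since $\mu'_j\le M_{k-1}<M_k$ there), so the RPP and SSYT conditions survive the swap. Then Lemma~\ref{lem:pd2} breaks $\pu^{M_k}(\widetilde{U}_k)$ into level-$1$ RSE-tableaux on each $D_j$, and combined with $\widetilde{T}_{k+1}$, which is manifestly level-$1$ by construction, Proposition~\ref{prop:snc RSE1} yields $\Phi(\pp)\in\lsnc$. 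Property (ii) is immediate: exchanging tails of $q_i$ and $q_j$ translates into exchanging the endpoints of the corresponding paths in $\pp$, so $\type(\Phi(\pp))=\type(\pp)\cdot(i,j)$, while the multiset of diagonal steps is preserved, giving $\wt(\Phi(\pp))=-\wt(\pp)$.

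For (iii), observe that $T_j$ and hence $U_j$ for $j\le k$ are unchanged by $\Phi$; in particular $\pd^{m_k}(U_k)$ is the same when $\Phi$ is run on $\pp$ or on $\pp'=\Phi(\pp)$. The crucial claim is that the first failure of the construction when applied to $\pp'$ is again at index $k$: the new condition $\widetilde{T}_{k+1}\le\pd^{m_k}(U_k)$ still fails, with the obstruction coming from the same intersection $(a,b)$ (because the swap exchanges rather than removes the offending diagonal steps), and by the extremal choice of $(a,b)$ in Step~3-2 this intersection is again the selected one, so Step~3-2 undoes the previous tail-swap and $\Phi(\pp')=\pp$. For (iv), $\Phi(\pp)=\pp$ exactly when the construction succeeds all the way to $U_{r+1}$. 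Iterating Lemma~\ref{lem:pd2} gives $\pu^{M_r}(U_{r+1})=T$, hence $\pd^{M_r}(T)=U_{r+1}\in\RSE_{M_r+1}(\lm)$; since $\mu_j=0$ for $j>M_r$, Lemma~\ref{lem:pd} is automatically satisfied for the remaining $m_{r+1}-1$ applications of $\pd$, yielding $\pd^{\ell-1}(T)\in\RSE_\ell(\lm)$, i.e., $\Tab(\pp)\in\pu^{\ell-1}(\RSE_\ell(\lm))$. The converse follows by setting $U_{r+1}=\pd^{M_r}(T)$ and recovering each $U_i$ column-wise via Lemma~\ref{lem:pu2}.

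The main obstacle is step (iii): verifying that running $\Phi$ on $\Phi(\pp)$ selects the same failure index $k$ and the same intersection $(a,b)$. This requires showing that the extremal choice of intersection in Step~3-2 is intrinsic to the unordered pair of swapped paths, and that the failure condition $T_{k+1}\not\le\pd^{m_k}(U_k)$ is symmetric under the tail-exchange: the switch produces a mirror obstruction at the same height and column. Making these symmetries precise will require a careful analysis of the diagonal-step positions created by the swap within the unrestricted region above $\mu$, where the indices $d_{k+1}+1\le i,j\le s$ precisely single out the paths that can be freely interchanged without violating any of the RSE constraints.
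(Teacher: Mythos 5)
Your step (iii) rests on a false premise, and this is exactly where the substance of the theorem lies. You claim that ``$T_j$ and hence $U_j$ for $j\le k$ are unchanged by $\Phi$; in particular $\pd^{m_k}(U_k)$ is the same when $\Phi$ is run on $\pp$ or on $\pp'$.'' This is not true: $\Phi$ replaces the columns of index $>d_k$ of $T=\Tab(\pp)$ by $\pu^{M_k}(\widetilde{U}_{k})$, and the RSK insertions hidden in $\pu^{M_k}$ redistribute entries among \emph{all} of those columns, so the blocks $T_j(\pp')=\col_{D_j}(T(\pp'))$ for $j\le k$ generally differ from $T_j(\pp)$. The paper's own worked example shows this: comparing $T$ in Figure~\ref{fig:ppT} with $T'$ in Figure~\ref{fig:Phi4}, already column $5$ (part of $T_2$, with $k=2$) changes from $2,3,7,8,2^*,3^*,5^*$ to $2,3,7,8,9,2^*,3^*,4^*,6^*$. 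What must actually be proved is (a) that the construction run on $\pp'$ does not fail \emph{before} index $k$, i.e.\ $U_1(\pp'),\dots,U_k(\pp')$ are defined, and (b) that $\pd^{m_k}(U_k(\pp'))$ equals precisely the \emph{swapped} tableau $\widetilde{U}_k(\pp)$ (not $\pd^{m_k}(U_k(\pp))$), so that $\Tab(\mathbf{q}(\pp'))=\Tab(\mathbf{q'}(\pp))$ and the tail-exchange undoes itself. The paper obtains (a) and (b) from two auxiliary results you have no substitute for: the closed formula $U_i=\pd^{M_{i-1}}(\col_{\ge d_i+1}(T))$ (Lemma~\ref{lem:U_i}) and the membership criterion $\col_{\ge d_i+1}(T)\in\pu^{M_{i-1}}(\RSE_{M_{i-1}+1}(\col_{\ge d_i+1}(\rho)))$ for $U_i$ to be defined (Lemma~\ref{lem:U_i2}); applied to $\pp'$, where by construction $\col_{\ge d_k+1}(T(\pp'))=\pu^{M_k}(\widetilde{U}_k(\pp))$, they give $U_k(\pp')=\pu^{m_k}(\widetilde{U}_k(\pp))$ and then $T_{k+1}(\pp')=\widetilde{T}_{k+1}(\pp)\not\le\widetilde{U}_k(\pp)=\pd^{m_k}(U_k(\pp'))$, so the failure index is again $k$. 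Your proposed ``symmetry of the obstruction'' is the right intuition but cannot be made to work from the invariance claim as stated.

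There is also a gap in your step (i): the danger in Step~3-2 is not proximity to the $\mu$-cells but that exchanging tails could destroy semistandardness of the column blocks and the compatibility at the level row. One needs the extremal choice of $(a,b)$ (largest $(b,a)$ lexicographically) to guarantee that both families $\{q'_l: d_{k+1}+1\le l\le d_k\}$ and $\{q'_l: d_k+1\le l\le s\}$ remain non-intersecting, so that $\widetilde{T}_{k+1}$ and $\row_{\ge M_k+1}(\widetilde{U}_k)$ are still column-strict; and one needs the observation that the intersection occurs after the first diagonal step of the right-hand path, so row $M_k+1$ of the right block is unchanged and rows $M_k,M_k+1$ of $\widetilde{U}_k$ still form an RPP (this is the content of Lemma~\ref{lem:sign-reversing}). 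Your parts (ii) and (iv) are essentially correct and follow the paper's route (the fixed-point argument via $\mu_j=0$ for $j>M_r$ and Lemma~\ref{lem:pd} matches Lemma~\ref{lem:fixed points}), but without repairing (iii), and to a lesser extent (i), the proof does not go through.
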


Note that Theorem~\ref{thm:involution} immediately implies
Proposition~\ref{prop:snc=pu}, and hence completes the proof of
Theorem~\ref{thm:main}. The rest of this section is devoted to proving
Theorem~\ref{thm:involution}. We will constantly use the notation in
Definition~\ref{def:Phi}.

We first show that $\Phi$ is a well-defined map on $\lsnc$. The only thing that
needs to be checked is Step~3-3 in the construction of $\Phi(\pp)$. More
precisely we must check the three assertions in the following lemma. One can see
that these three assertions imply $\Phi(\pp)=\pp'\in\lsnc$ as follows. By the
second assertion, we obtain that $\pu^{M_k}(\widetilde{U}_{k})$ is an
RSE-tableau of level $1$. This together with the first assertion implies that
$\col_{D_k}(T')$ is an RSE-tableau of level $1$ for all $1\le k\le r+1$. Then
Propositions~\ref{prop:Tab}, \ref{prop:snc RSE1} and the third assertion
imply that $\pp'\in\lsnc$.

\begin{lem}\label{lem:sign-reversing}
  Let $\pp\in\lsnc$ with $\type(\pp)=\pi$. Suppose that $U_{r+1}$ is not defined
  in the construction of $\Phi(\pp)$. Then
\begin{enumerate}
\item $\widetilde{T}_{k+1}$ is an RSE-tableau of level $1$, 
\item $\widetilde{U}_{k}$ is an RSE-tableau of level $M_{k}+1$, and
\item $T'\in\VT(\pi'(\lambda)/\mu)$,
where $\pi'=\pi (i,j)$ and  $(i,j)$ is the transposition exchanging
  $i$ and $j$.
\end{enumerate}

\end{lem}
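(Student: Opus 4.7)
My plan is to translate the tail-swap of Step 3-2 into a local modification of the tableaux $T_{k+1}$ and $\pd^{m_k}(U_k)$, then verify each assertion separately. The key observation is that under the bijection $\Tab$, a diagonal step of path $q_i$ ending at $(a,b)$ becomes the tableau entry $b$ in cell $(a-n+i-1,\,i)$; hence swapping the tails of $q_i$ and $q_j$ at their last common point $(a,b)$ only redistributes entries in rows strictly below $a-n+i-1$ of column $i$ (and analogously in column $j$), leaving the top portions of both columns untouched.

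I would first check that Step 3-2 is well defined when the construction fails at step $k$. The failure condition $T_{k+1}\not\le \pd^{m_k}(U_k)$ means the concatenated tableau fails the RPP property at the interface between these two blocks. Translating this to the path picture shows that two paths of $\mathbf{q}$ with indices in the range $d_{k+1}+1 \le i,j \le s$ must cross, so an intersection exists; the lexicographically largest such $(a,b)$ (with respect to $(b,a)$) then serves as the pivot. The choice of $s$ as the largest nonempty column index in $\row_{\ge M_k+1}(\pd^{m_k}(U_k))$ ensures the swap remains inside the region where the shape supports both paths.

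For assertion (1), $\widetilde{T}_{k+1}$ differs from $T_{k+1}$ only in rows $\ge M_k+1$ of two columns in $D_{k+1}$. Rows $1,\ldots,M_k$ of $\widetilde{T}_{k+1}$ agree with those of $T_{k+1}$, preserving the negative entries encoding $\mu$ as well as any $\omega$-entries already in the top portion. Column-strictness in the two modified columns is preserved because both pre-swap sequences end at heights $\le b$ and both post-swap sequences begin at heights $\ge b$ at the intersection $(a,b)$. Hence $\widetilde{T}_{k+1}$ satisfies the three conditions of Proposition~\ref{prop:oRSE} and so $\widetilde{T}_{k+1}\in\RSE_1(\lm)$. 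For assertion (2), Lemma~\ref{lem:pu2} together with the iterative construction of $U_k$ gives that $\pd^{m_k}(U_k)$ is a level-$(M_k+1)$ RSE-tableau on the appropriate skew shape. The same local-swap analysis applies: the top portion through row $M_k$ is unchanged, column-strictness in rows $\ge M_k+1$ is preserved at the swap, and the elegance bound $\mu'_j+1 \le E(i,j)\le i-1$ survives since any relocated $\omega$-entry remains within the same column, and both columns $i$ and $j$ already satisfied the bound.

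For assertion (3), the tail-swap exchanges the ending $x$-coordinates of $q_i$ and $q_j$ while leaving all starting points unchanged. This is exactly the effect of replacing the type permutation $\pi$ by $\pi'=\pi(i,j)$, so by Lemma~\ref{lem:VT}, $T'\in\VT(\pi'(\lambda)/\mu)$. The main obstacle is the fine bookkeeping around the swap row: because of the $-n+\text{column}-1$ offset in $\Tab$, content moving from column $j$ to column $i$ shifts its row index by $i-j$, so the old and new column lengths differ by the same amount. Verifying that the heights at the swap interface maintain strict increasingness in each of the two modified columns, and that no relocated $\omega$-entry violates the elegance bound $E(i,j)\le i-1$, is the technical heart of the argument.
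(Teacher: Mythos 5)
Your overall strategy is the paper's: translate the tail swap of Step 3-2 into a local change of the tableaux and use Lemma~\ref{lem:VT} to get assertion (3), and that part of your argument is fine. However, there are genuine gaps in (1) and (2). First, the two modified columns are not both in $D_{k+1}$: since $\row_{\ge M_k+1}(T_{k+1})$ and $\row_{\ge M_k+1}(\pd^{m_k}(U_k))$ are SSYTs, the paths within each of the two groups $\{q_l: d_{k+1}+1\le l\le d_k\}$ and $\{q_l: d_k+1\le l\le s\}$ are pairwise non-intersecting, so the crossing pair must straddle the groups; what changes is one column of $\widetilde{T}_{k+1}$ and one column of $\widetilde{U}_k$. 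Second, and most seriously, column-strictness of the modified columns is automatic and is not the point. What has to be shown is that each modified block is still \emph{semistandard}, i.e.\ that its rows remain weakly increasing, which under $\Tab$ is equivalent to the paths within each group being pairwise non-intersecting \emph{after} the swap. This is exactly what the choice of $(a,b)$ with $(b,a)$ lexicographically largest guarantees, and your argument never uses that maximality; swapping at an arbitrary intersection can create a new crossing inside a group, i.e.\ a row decrease, and then $\widetilde{T}_{k+1}$ or $\widetilde{U}_k$ is not an RSE-tableau at all.

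Third, you do not check the interface between the unchanged rows $\le M_k$ and the modified rows $\ge M_k+1$. For $\widetilde{T}_{k+1}$ this is harmless because row $M_k$ there consists of negative entries, but for $\widetilde{U}_k$ one needs that row $M_k+1$ of $\col_{\ge d_k+1}(\Tab(\mathbf{q}'))$ coincides with that of $\Tab(\mathbf{q})$; the paper deduces this from the fact that $q_j$ starts strictly to the left of $q_i$ (as $i<j$), so the chosen intersection lies after the first diagonal step of $q_j$. Without this, the RPP condition across the marked level of $\widetilde{U}_k$ is unproved. Finally, your claim that ``any relocated $\omega$-entry remains within the same column'' is false and contradicts your own closing remark: the tail of $q_j$ becomes part of $q'_i$, so entries migrate between columns $i$ and $j$ (and between the two blocks), so the elegance bounds cannot be dismissed on that ground. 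These missing points are the technical heart of the lemma, so as it stands the proposal does not establish assertions (1) and (2).
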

\begin{proof}
  Recall that $\mathbf{q}'=(q'_1,\dots,q'_n)$ and $\mathbf{q}=(q_1,\dots,q_n)$
  differ only by the $i$th and $j$th paths, where $q'_i$ and $q'_j$ are obtained
  from $q_i$ and $q_j$ by exchanging the subpaths after the intersection
  $(a,b)$. Suppose $i<j$ so that $i\in D_k$ and $j\ge d_k+1$. The choice of the
  intersection point $(a,b)$ in Step 3-2 guarantees that both $\{q'_l:
  d_{k+1}+1\le l\le d_k\}$ and $\{q'_l: d_{k}+1\le l\le s\}$ are
  non-intersecting. This implies that $\col_{D_k}(\Tab(\mathbf{q}'))$ and
  $\col_{\ge d_k+1}(\Tab(\mathbf{q}'))$ are SSYTs (whose entries are extended
  integers). Moreover, since $i<j$, the initial point $(\mu'_j+n-j,0)$ of $q_j$
  is to the left of the initial point $(\mu'_i+n-i,0)$ of $q_i$. Therefore the
  intersection $(a,b)$ of $q_i$ and $q_j$ must occur after the first diagonal
  step of $q_j$. This shows that row $M_k+1$ of $\col_{\ge
    d_k+1}(\Tab(\mathbf{q'}))$ is the same as that of $\col_{\ge
    d_k+1}(\Tab(\mathbf{q}))$.

 Note that 
 \begin{align*}
   \row_{\le M_k}(\widetilde{T}_{k+1})&=\row_{\le M_k}(T_{k+1}),\\
   \row_{\le M_k}(\widetilde{U}_{k})&=\row_{\le M_k}(\pd^{m_k}(U_k)),\\
   \row_{\ge M_k+1}(\widetilde{T}_{k+1})&=\col_{D_k}(\Tab(\mathbf{q}')),\\
   \row_{\ge M_k+1}(\widetilde{U}_{k})&=\col_{\ge d_k+1}(\Tab(\mathbf{q}')),
 \end{align*}
 where everything in the right-hand side is an SSYT (whose entries are extended
 integers) except $\row_{\le M_k}(\pd^{m_k}(U_k))$, which is an RPP. Thus
 checking the first and second assertions reduces to checking the following:
  \begin{enumerate}
  \item Rows $M_k$ and $M_{k}+1$ of $\widetilde{T}_{k+1}$ form an SSYT.
  \item Rows $M_k$ and $M_{k}+1$ of $\widetilde{U}_{k}$ form an RPP.
  \end{enumerate}
  The first statement is true because row $M_k$ of $\widetilde{T}_{k+1}$ is
  empty (or equivalently filled with negative entries $\overline{M_k}$'s). The
  second statement is also true because rows $M_k$ and $M_k+1$ of
  $\widetilde{U}_{k}$ are identical with those of $\col_{\ge
    d_k+1}(\pu^{m_k}(U_k))$ by the last sentence of the previous paragraph.

  For the final assertion recall that $\Tab(\pp)\in\VT(\pi(\lambda)/\mu)$ and
  $\Tab(\mathbf{q})\in\VT(\pi(\lambda)/\gamma)$. By Lemma~\ref{lem:VT},
  $\type(\pp)=\type(\mathbf{q})=\pi$. Since $\mathbf{q'}$ is obtained from
  $\mathbf{q}$ by changing the tails of $q_i$ and $q_j$, we have
  $\type(\mathbf{q}')=\pi'$. Hence, by Lemma~\ref{lem:VT},
  $\Tab(\mathbf{q}')\in\VT(\pi'(\lambda)/\gamma)$. Since $T'$ has the same inner
  shape as $T$ and the same outer shape as $\Tab(\mathbf{q}')$, the third
  assertion follows.
\end{proof}

The following lemma gives a more direct way of computing $U_i$.

\begin{lem}\label{lem:U_i}
  Using the notation in Definition~\ref{def:Phi}, for $1\le i\le r+1$, if
  $U_{i}$ is defined, then 
\[
\pu^{M_{i-1}}(U_i)=\col_{\ge d_i+1}(T),
\]
or equivalently,
\[
U_i=\pd^{M_{i-1}}(\col_{\ge d_i+1}(T)),
\]
where $M_0=0$. 
\end{lem}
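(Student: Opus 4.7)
We proceed by induction on $i$. The base case $i=1$ is immediate: $M_0=0$ and $D_1=\{d_1+1,\dots,n\}$ give
$U_1 = T_1 = \col_{\ge d_1+1}(T) = \pd^{M_0}(\col_{\ge d_1+1}(T))$.

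For the inductive step, suppose $\pu^{M_{i-1}}(U_i) = \col_{\ge d_i+1}(T)$ and $U_{i+1}$ is defined. Since $D_{i+1}=\{d_{i+1}+1,\dots,d_i\}$, we have $\col_{\ge d_{i+1}+1}(T) = T_{i+1}\sqcup\col_{\ge d_i+1}(T)$, while by definition $U_{i+1} = T_{i+1}\sqcup\pd^{m_i}(U_i)$. Hence the inductive step reduces to proving
\[
\pu^{M_i}(U_{i+1}) = T_{i+1}\sqcup \pu^{M_i}(\pd^{m_i}(U_i)).
\]
Once this is established, the fact that $\pu$ and $\pd$ are mutual inverses (Proposition~\ref{prop:inverse}) together with $M_i-m_i=M_{i-1}$ yields $\pu^{M_i}(\pd^{m_i}(U_i)) = \pu^{M_{i-1}}(U_i) = \col_{\ge d_i+1}(T)$ by the inductive hypothesis, and therefore $\pu^{M_i}(U_{i+1}) = T_{i+1}\sqcup\col_{\ge d_i+1}(T) = \col_{\ge d_{i+1}+1}(T)$, as desired.

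The displayed identity will be proved by applying Lemma~\ref{lem:pu} iteratively $M_i$ times, each time with splitting column $s = d_i$, which separates $T_{i+1}$ (occupying columns $d_{i+1}+1,\dots,d_i$) from $\pd^{m_i}(U_i)$ (occupying columns $d_i+1,\dots,n$). Lemma~\ref{lem:pu} requires the splitting column to satisfy $1\le s\le \mu_k$ when $\pu$ acts on an RSE-tableau of level $k+1$. The crucial observation is that for every level $k\in\{1,\dots,M_i\}$ encountered during the iteration, row $k$ of $\mu$ satisfies $\mu_k=d_j$ for some $j\le i$ (because $M_{j-1}+1\le k\le M_j$ with $j\le i$), which forces $\mu_k\ge d_i\ge 1$. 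Moreover, at each intermediate step Lemma~\ref{lem:pu} itself guarantees that the concatenation $T_{i+1}\sqcup \pu^{t}(\pd^{m_i}(U_i))$ is a valid RSE-tableau, so the iteration can continue; after $M_i$ steps we arrive at the displayed identity.

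The principal obstacle is precisely the verification that the splitting column $s=d_i$ remains admissible throughout all $M_i$ applications of $\pu$; once this is handled, the remainder is bookkeeping with the inverse relation and the inductive hypothesis.
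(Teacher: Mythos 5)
Your proposal is correct and follows essentially the same route as the paper: induction on $i$, splitting off the columns of $T_{i+1}$ at column $d_i$ while applying $\pu$ repeatedly (justified by $\mu_k\ge\mu_{M_i}=d_i\ge 1$ for $k\le M_i$), then using that $\pu$ and $\pd$ are mutual inverses together with the induction hypothesis. The only cosmetic difference is that the paper packages the iterated column-splitting as Lemma~\ref{lem:pd2} and applies it in two stages ($\pu^{m_i}$ with bound $\mu_{M_i}=d_i$, then $\pu^{M_{i-1}}$ with bound $\mu_{M_{i-1}}=d_{i-1}$), whereas you iterate Lemma~\ref{lem:pu} directly in one sweep of $M_i$ steps.
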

\begin{proof}
  We proceed by induction on $i$. It is true for the case $i=1$, which is
  $U_1=T_1$. Assume true for the case $i\ge1$ and consider the case $i+1$. Since
  $U_{i+1}\in\RSE_{M_{i}+1}(\col_{\ge d_{i+1}+1}(\pi(\lambda)/\mu))$ and
  $\mu_{M_{i}}=d_{i}$, by Lemma~\ref{lem:pu2},
  $\pu^{m_i}(U_{i+1})\in\RSE_{M_{i-1}+1}(\col_{\ge d_{i+1}+1}(\pi(\lambda)/\mu))$ and
\[
\pu^{m_i}(U_{i+1})=\col_{\le d_i}(U_{i+1})\sqcup \pu^{m_i}(\col_{\ge d_{i}+1}(U_{i+1})).
\]
Using the construction of $U_{i+1}=T_{i+1}\sqcup\pd^{m_i}(U_i)$
the above equation can be rewritten as
\[
  \pu^{m_i}(U_{i+1})=T_{i+1}\sqcup \pu^{m_i}(\pd^{m_i}(U_i))
  =T_{i+1}\sqcup U_i,
\]
which implies
\begin{equation}
  \label{eq:8}
\col_{\le d_i}(\pu^{m_i}(U_{i+1}))=T_{i+1},\qquad  \col_{\ge d_{i}+1}(\pu^{m_i}(U_{i+1}))=U_i.
\end{equation}
Since $\pu^{m_i}(U_{i+1})\in\RSE_{M_{i-1}+1}(\col_{\ge d_{i+1}+1}(\lm))$ and
$d_i=\mu_{M_i}<\mu_{M_{i-1}}$, by using Lemma~\ref{lem:pu2} and \eqref{eq:8}
 we obtain
\[
\pu^{M_{i-1}} (\pu^{m_i}(U_{i+1}))
=\col_{\le d_i}(\pu^{m_i}(U_{i+1}))\sqcup \pu^{M_{i-1}}(\col_{\ge d_{i}+1}(\pu^{m_i}(U_{i+1})))
=T_{i+1}\sqcup \pu^{M_{i-1}}(U_i).
\]
By the induction hypothesis, $\pu^{M_{i-1}}(U_i)= \col_{\ge d_{i}+1}(T)$.
Hence the above equation can be rewritten as
\[
\pu^{M_{i}} (U_{i+1})=T_{i+1}\sqcup \col_{\ge d_{i}+1}(T)= \col_{\ge d_{i+1}+1}(T),
\]
which is the desired statement for the $i+1$ case. This completes the proof.
\end{proof}

The following lemma gives an equivalent condition for $U_i$ to be defined.

\begin{lem}\label{lem:U_i2}
  Using the notation in Definition~\ref{def:Phi}, for $1\le i\le r+1$, $U_{i}$ is
  defined if and only if
  \begin{equation}
    \label{eq:7}
  \col_{\ge d_i+1}(T)\in \pu^{M_{i-1}}(\RSE_{M_{i-1}+1}(\col_{\ge d_i+1}(\rho))).    
  \end{equation}
where $\rho=\sh(T)=\pi(\lambda)/\mu$. 
\end{lem}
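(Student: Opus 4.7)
The plan is to prove both implications by induction on $i$, with the base case $i = 1$ handled directly: $U_1 = T_1 = \col_{\ge d_1+1}(T)$ is always defined, and the condition \eqref{eq:7} reduces to $T_1 \in \RSE_1(\col_{\ge d_1+1}(\rho))$, which holds by Proposition~\ref{prop:snc RSE1} since $\pp \in \lsnc$.

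For the forward implication, if $U_{i+1}$ is defined, Lemma~\ref{lem:U_i} gives $\pu^{M_i}(U_{i+1}) = \col_{\ge d_{i+1}+1}(T)$ with $U_{i+1} \in \RSE_{M_i+1}(\col_{\ge d_{i+1}+1}(\rho))$, which is precisely \eqref{eq:7} for $i+1$. This is immediate.

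The main work is the reverse implication. Assume by induction that \eqref{eq:7} for $i$ is equivalent to $U_i$ being defined, and suppose $\col_{\ge d_{i+1}+1}(T) = \pu^{M_i}(V)$ for some $V \in \RSE_{M_i+1}(\col_{\ge d_{i+1}+1}(\rho))$. I must verify both (a) $U_i$ is defined and (b) $T_{i+1} \le \pd^{m_i}(U_i)$. The key calculation is to apply Lemma~\ref{lem:pd2} to $V$ with $d = m_i$, $j = d_i$ (noting $\mu_{M_i} = d_i$), which yields
\[
\pu^{m_i}(V) = \col_{\le d_i}(V) \sqcup \pu^{m_i}(\col_{\ge d_i+1}(V)),
\]
with $\pu^{m_i}(V) \in \RSE_{M_{i-1}+1}$. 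Applying Lemma~\ref{lem:pd2} once more with $d = M_{i-1}$, $j = d_i$, I peel off the left block a second time to obtain
\[
\col_{\ge d_{i+1}+1}(T) = T_{i+1} \sqcup \pu^{M_i}(\col_{\ge d_i+1}(V)),
\]
which identifies $T_{i+1} = \col_{\le d_i}(V)$ (from the $\sqcup$-splitting) and forces $\col_{\ge d_i+1}(T) = \pu^{M_i}(\col_{\ge d_i+1}(V))$. Since $\pu^{m_i}(\col_{\ge d_i+1}(V)) \in \RSE_{M_{i-1}+1}(\col_{\ge d_i+1}(\rho))$, this exhibits $\col_{\ge d_i+1}(T) \in \pu^{M_{i-1}}(\RSE_{M_{i-1}+1}(\col_{\ge d_i+1}(\rho)))$, so by the induction hypothesis $U_i$ is defined, proving (a).

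For (b), Lemma~\ref{lem:U_i} gives $U_i = \pd^{M_{i-1}}(\col_{\ge d_i+1}(T))$, and since $\col_{\ge d_i+1}(T) \in \pu^{M_{i-1}}(\RSE_{M_{i-1}+1})$ the repeated application of Lemma~\ref{lem:pd} ensures $\pd$ is well-defined at each step, yielding $U_i = \pu^{m_i}(\col_{\ge d_i+1}(V))$ and then $\pd^{m_i}(U_i) = \col_{\ge d_i+1}(V)$. Since $V$ is itself an RPP and $V = \col_{\le d_i}(V) \sqcup \col_{\ge d_i+1}(V) = T_{i+1} \sqcup \pd^{m_i}(U_i)$, the definition of $\le$ gives $T_{i+1} \le \pd^{m_i}(U_i)$, completing (b) and thus the inductive step. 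The main obstacle is keeping track of the levels and ensuring the hypotheses $c \le \mu_b$ and $j \le \mu_{b-1}$ of Lemma~\ref{lem:pd2} are satisfied at each split; the identifications $\mu_{M_i} = d_i$ and $d_i < d_{i-1}$ make everything fit.
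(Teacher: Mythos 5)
Your proof is correct, and it reaches the same destination with the same toolbox (Lemma~\ref{lem:U_i}, the column-splitting lemmas, and the characterization of $\le$ via concatenation being an RPP), but it is organized differently from the paper. The paper does not induct on the statement of the lemma: for the converse it fixes the hypothesis \eqref{eq:7} at index $i$ and runs an inner induction on $s$, applying Lemma~\ref{lem:pu2} (the $\pd$-splitting for elements in the image of $\pu^{a}$) to $Q=\col_{\ge d_i+1}(T)$ to get $\pd^{M_s}(Q)=\col_{\le d_s}(Q)\sqcup\pd^{M_s}(\col_{\ge d_s+1}(Q))\in\RSE_{M_s+1}$, reads off $\col_{\le d_s}(Q)\le\pd^{m_s}(U_s)$, and then restricts to $T_{s+1}$. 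You instead induct on the equivalence itself, work with the $\pu$-preimage $V\in\RSE_{M_i+1}$, and use Lemma~\ref{lem:pd2} (the $\pu$-splitting) to show that \eqref{eq:7} descends from $i+1$ to $i$ and to identify $T_{i+1}=\col_{\le d_i}(V)$ and $\pd^{m_i}(U_i)=\col_{\ge d_i+1}(V)$, so that $T_{i+1}\sqcup\pd^{m_i}(U_i)=V$ is itself an RPP, giving $T_{i+1}\le\pd^{m_i}(U_i)$ directly. Your version makes the $\le$ step cleaner (the two pieces concatenate to exactly $V$, whereas the paper must pass from $\col_{\le d_s}(Q)$ to its trailing block $T_{s+1}$); the paper's version avoids having to re-enter the membership condition \eqref{eq:7} at the lower index. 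Two small points to tidy: your claim $\pu^{m_i}(\col_{\ge d_i+1}(V))\in\RSE_{M_{i-1}+1}(\col_{\ge d_i+1}(\rho))$ uses (via the splitting $\col_{\ge d_i+1}(\pu^{m_i}(V))=\pu^{m_i}(\col_{\ge d_i+1}(V))$) the fact that restricting an RSE-tableau to a set of trailing columns is again an RSE-tableau of the same level — true, and used implicitly by the paper as well, but worth stating; and in step (b) the appeal to Lemma~\ref{lem:pd} is unnecessary — since $\col_{\ge d_i+1}(T)$ and $U_i$ are exhibited as $\pu$-images, Proposition~\ref{prop:inverse} (mutual inverseness of $\pu$ and $\pd$) already yields $U_i=\pu^{m_i}(\col_{\ge d_i+1}(V))$ and $\pd^{m_i}(U_i)=\col_{\ge d_i+1}(V)$.
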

\begin{proof}
  Suppose that $U_i$ is defined. Then \eqref{eq:7} follows immediately from
  Lemma~\ref{lem:U_i} since $U_{i}\in\RSE_{M_{i-1}+1}(\col_{\ge d_i+1}(\rho))$.

  Conversely suppose that \eqref{eq:7} holds. We will prove by induction that
  $U_s$ is defined for all $1\le s\le i$. The case $s=1$ is trivial. Assume that
  $U_{s}$ is defined and $1\le s\le i-1$. Then by Lemma~\ref{lem:U_i},
  \begin{equation}
    \label{eq:9}
U_s=\pd^{M_{s-1}}(\col_{\ge d_s+1}(T)).
  \end{equation}

  Recall that $U_{s+1}$ is defined if $T_{s+1}\le\pd^{m_s}(U_s)$. To show this
  let $Q=\col_{\ge d_i+1}(T)$. Since
  $Q\in\pu^{M_{i-1}}(\RSE_{M_{i-1}+1}(\col_{\ge d_i+1}(\rho)))$, by
  Lemma~\ref{lem:pu2} with $a=M_{i-1}, b=M_{i-1}+1, d=M_{s-1}$, and
  $j=\mu_{b-a-d-1}=\mu_{M_{s-1}}=d_s$, we have
  \begin{equation}
    \label{eq:4}
    \pd^{M_s}(Q) = \col_{\le d_s}(Q)\sqcup \pd^{M_s}(\col_{\ge d_s+1}(Q))
  \in \RSE_{M_s+1}(\col_{\ge c+1}(\rho)).
  \end{equation}
  This implies that $\col_{\le d_s}(Q)\le \pd^{M_s}(\col_{\ge d_s+1}(Q))$.
  Since the leftmost columns of $\col_{\le d_s}(Q)$ and $T_{s+1}$ coincide, we
  also have
\[
T_{s+1}\le \pd^{M_s}(\col_{\ge d_s+1}(Q)).
\]
 On the other hand, by \eqref{eq:9}, 
\[
  \pd^{M_s}(\col_{\ge d_s+1}(Q)) = \pd^{M_s}(\col_{\ge d_s+1}(T)) =
  \pd^{m_s}(U_s).
\]
The above two equations show that $T_{s+1}\le \pd^{m_s}(U_s)$ and hence
$U_{s+1}$ is defined. Therefore by induction $U_i$ is also defined, which
completes the proof.
\end{proof}

The following lemma shows that $\Phi$ has the desired fixed points. 

\begin{lem}\label{lem:fixed points}
We have $\Phi(\pp)=\pp$ if and only if $T=\Tab(\pp)\in \pu^{\ell-1}(\RSE_\ell(\lm))$.
\end{lem}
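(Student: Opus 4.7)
The plan is to use Lemma~\ref{lem:U_i2} (with $i=r+1$, noting $d_{r+1}=0$) to rephrase $\Phi(\pp)=\pp$, which by definition means $U_{r+1}$ is defined, as the condition
\[
T\in\pu^{M_r}(\RSE_{M_r+1}(\pi(\lambda)/\mu)),
\]
where $\pi=\type(\pp)$. I then show this condition is equivalent to $T\in\pu^{\ell-1}(\RSE_\ell(\lm))$.

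For the direction ($\Leftarrow$), write $T=\pu^{\ell-1}(V)$ with $V\in\RSE_\ell(\lm)=\RPP(\lm)$; then $\sh T=\lm$, which combined with $T\in\VT(\pi(\lambda)/\mu)$ forces $\pi(\lambda)=\lambda$. Since $\lambda'_i-i$ is strictly decreasing in $i$, the equality $\lambda'_{\pi_j}-\pi_j=\lambda'_j-j$ forces $\pi_j=j$, so $\pi=\mathrm{id}$. Iterating Proposition~\ref{prop:wpinj}, the tableau $U^*:=\pu^{\ell-1-M_r}(V)$ lies in $\RSE_{M_r+1}(\lm)=\RSE_{M_r+1}(\pi(\lambda)/\mu)$, and $T=\pu^{M_r}(U^*)$ gives the required membership.

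For the direction ($\Rightarrow$), take $U^*\in\RSE_{M_r+1}(\pi(\lambda)/\mu)$ with $T=\pu^{M_r}(U^*)$. A genuine skew RSE-tableau $U^*=(R,E)$ requires a partition $\nu$ with $\mu\subseteq\nu\subseteq\pi(\lambda)$, $\row_{\le M_r+1}(\nu)=\row_{\le M_r+1}(\pi(\lambda))$, and a $\mu$-elegant tableau $E$ on $\pi(\lambda)/\nu$. From these requirements, combined with the semi-noncrossing hypothesis on $\pp$, one deduces that the outer shape $\pi(\lambda)$ must itself be a partition, and the monotonicity argument above then forces $\pi=\mathrm{id}$, so $U^*\in\RSE_{M_r+1}(\lm)$. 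Since $\mu_k=0$ for every $k\ge\ell(\mu)+1=M_r+1$, condition~(4) of Lemma~\ref{lem:pd} is trivially satisfied at each intermediate level, so $\pd$ can be iterated to yield $V':=\pd^{\ell-1-M_r}(U^*)\in\RSE_\ell(\lm)$; because $\pu$ and $\pd$ are mutual inverses (Proposition~\ref{prop:inverse}), $T=\pu^{M_r}(U^*)=\pu^{\ell-1}(V')\in\pu^{\ell-1}(\RSE_\ell(\lm))$.

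The main obstacle is the implication that a $U^*\in\RSE_{M_r+1}(\pi(\lambda)/\mu)$ with $\pu^{M_r}(U^*)=T$ forces $\pi(\lambda)$ to be a partition. The condition $\row_{\le M_r+1}(\nu)=\row_{\le M_r+1}(\pi(\lambda))$ alone only makes the first $M_r+1$ rows of $\pi(\lambda)$ partition-like, so the deduction must combine that $\nu$ is a partition in its entirety (its rows of index $>M_r+1$ must also be weakly decreasing and fit inside $\pi(\lambda)$) with the $\mu$-elegant condition $\mu'_j+1\le E(i,j)\le i-1$ on $E$, using the semi-noncrossing structure of $\pp$ to rule out any non-identity type.
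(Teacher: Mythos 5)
Your route is the same as the paper's: reduce $\Phi(\pp)=\pp$ to the definedness of $U_{r+1}$ via Lemmas~\ref{lem:U_i} and \ref{lem:U_i2}, obtain one implication by applying $\pu$ repeatedly (Lemma~\ref{lem:pu}, i.e.\ Proposition~\ref{prop:wpinj}), and obtain the other by applying $\pd$ repeatedly at levels $k\ge M_r+1$, where condition (4) of Lemma~\ref{lem:pd} is vacuous because $\mu_k=0$; this is exactly the induction \eqref{eq:5} in the paper, and your count $\ell-1-M_r=m_{r+1}-1$ is right. The only point where you depart from the paper is the explicit treatment of the type: in the forward direction you need $\pi=\type(\pp)=\mathrm{id}$ (equivalently, that the tableau $U^*$ with $T=\pu^{M_r}(U^*)$ has shape $\lm$ rather than $\pi(\lambda)/\mu$), and you leave precisely this step as a sketch, calling it ``the main obstacle'' and appealing to the semi-noncrossing hypothesis on $\pp$.

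That is the gap in your write-up, and the ingredient you propose is not the relevant one: semi-noncrossingness plays no role here. What closes the step is simply that $U_{r+1}=\pd^{M_r}(T)$ is, by construction, an honest skew RSE-tableau with inner shape $\mu$, so its outer shape is a partition; its cell set is $\pi(\lambda)/\mu$, whose column $j$ consists of the rows $\mu'_j+1,\dots,\lambda'_{\pi_j}-\pi_j+j$, so the outer column lengths are $\lambda'_{\pi_j}-\pi_j+j$ for every $j$ (also for empty columns, where both equal $\mu'_j$). Weak decrease of these lengths means $(\lambda'_{\pi_j}-\pi_j)_j$ is strictly decreasing, which forces $\pi=\mathrm{id}$ --- the same monotonicity argument you already use in the backward direction and which appears in the proof of Lemma~\ref{lem:VT}; no information about intersections of the paths is needed. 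The paper sidesteps the issue by noting directly that once $U_{r+1}$ is defined it lies in $\RSE_{M_r+1}(\lm)$, which is \eqref{eq:6}. With this one-line repair your argument coincides with the paper's proof.
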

\begin{proof}
Suppose $\Phi(\pp)=\pp$. Then $U_{r+1}$ is defined and therefore by Lemma~\ref{lem:U_i},
\begin{equation}
  \label{eq:6}
  T=\col_{\ge d_{r+1}+1}(T)\in \pu^{M_{r}}(\RSE_{M_{r}+1}(\lm)).  
\end{equation}
Thus $T=\pu^{M_{r}}(Q)$ for some $Q\in\RSE_{M_{r}+1}(\lm)$. To obtain $T\in
\pu^{\ell-1}(\RSE_\ell(\lm))$ it is enough to show that
$Q\in\pu^{m_{r+1}-1}(\RSE_\ell(\lm))$ because it would imply that there is
$Q'\in\RSE_\ell(\lm)$ satisfying
\[
T=\pu^{M_{r}}(Q)=\pu^{M_{r}}(\pu^{m_{r+1}-1}(Q')) = \pu^{\ell-1}(Q'). 
\]
Since $Q=\pd^{m_{r+1}-1}(Q')$ if and only if $\pu^{m_{r+1}-1}(Q)=Q'$, the
condition $Q\in\pu^{m_{r+1}-1}(\RSE_\ell(\lm))$ is equivalent to
$\pd^{m_{r+1}-1}(Q)\in\RSE_\ell(\lm)$. We will show by induction that 
\begin{equation}
  \label{eq:5}
  \pd^{i}(Q)\in\RSE_{M_r+1+i}(\lm), \qquad  0\le i\le m_{r+1}-1, 
\end{equation}
which is true for $i=0$ by assumption. Let $0\le i\le m_{k+1}-2$ and suppose
\eqref{eq:5} is true for $i$. Now we apply Lemma~\ref{lem:pd} with
$T=\pd^{i}(Q)$ and $k=M_r+1+i$. Since $\mu_{M_r+1+i}=0$, the fourth condition
of the lemma trivially holds. Hence the first condition of the lemma also holds
and we obtain $\pd(\pd^{i}(Q))\in\RSE_{M_r+1+i+1}(\lm)$, which is exactly
\eqref{eq:5} with $i+1$. By induction \eqref{eq:5} is true for all $0\le i\le
m_{r+1}-1$, and in particular we obtain $\pd^{m_{r+1}-1}(Q)\in\RSE_\ell(\lm)$
as desired.

Conversely, suppose that $T\in \pu^{\ell-1}(\RSE_\ell(\lm))$. To show
$\Phi(\pp)=\pp$, we must show that $U_{r+1}$ is defined, which is by
Lemma~\ref{lem:U_i} equivalent to \eqref{eq:6}. By the assumption there is
$Q\in\RSE_\ell(\lm)$ with $T=\pu^{\ell-1}(Q)$. Let $Q'=\pu^{m_{r+1}-1}(Q)$
so that
\[
T=\pu^{\ell-1}(Q)= \pu^{M_{r}}(\pu^{m_{r+1}-1}(Q))=
\pu^{M_{r}}(Q'). 
\]
By applying Lemma~\ref{lem:pu} repeatedly, we obtain $Q'=\pu^{m_{r+1}-1}(Q)\in
\RSE_{M_r+1}(\lm)$, which shows \eqref{eq:6}.
\end{proof}

The following lemma shows that $\Phi$ is indeed an involution.

\begin{lem}\label{lem:involution}
 If $\Phi(\pp)=\pp'$ and $\pp\ne\pp'$, then $\Phi(\pp')=\pp$.
\end{lem}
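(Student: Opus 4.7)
The plan is to trace through the construction of $\Phi(\pp')$ given in Definition~\ref{def:Phi} and show that each stage recovers the corresponding stage of $\pp$. Let $k$ be the smallest index at which the chain construction breaks for $\pp$, and let $(i,j,a,b)$ be the pair of paths and intersection point chosen in Steps~3-2 and 3-3. I decorate all quantities arising in the construction of $\Phi(\pp')$ with primes; since the inner shape $\mu$ is unchanged, the integers $d_i, m_i, M_i$ and the sets $D_i$ of Notation~\ref{notation} are the same for $\pp$ and $\pp'$.

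First I would identify $U'_k$. From Step~3-3, $\col_{\ge d_k+1}(T') = \pu^{M_k}(\widetilde{U}_k)$. By Lemma~\ref{lem:sign-reversing}, $\widetilde{U}_k \in \RSE_{M_k+1}(\col_{\ge d_k+1}(\pi'(\lambda)/\mu))$, so iterating Lemma~\ref{lem:pu} $m_k$ times yields $\pu^{m_k}(\widetilde{U}_k) \in \RSE_{M_{k-1}+1}$, and we factor $\pu^{M_k}(\widetilde{U}_k) = \pu^{M_{k-1}}(\pu^{m_k}(\widetilde{U}_k))$. Lemma~\ref{lem:U_i2} then shows $U'_k$ is defined, and Lemma~\ref{lem:U_i} gives $U'_k = \pd^{M_{k-1}}(\pu^{M_k}(\widetilde{U}_k)) = \pu^{m_k}(\widetilde{U}_k)$, so $\pd^{m_k}(U'_k) = \widetilde{U}_k$. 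In particular $U'_1, \dots, U'_{k-1}$ are defined as well.

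Next I would show the chain for $\pp'$ breaks at precisely the same index $k$. Since $T'_{k+1} = \widetilde{T}_{k+1}$ and $\pd^{m_k}(U'_k) = \widetilde{U}_k$, this amounts to proving $\widetilde{T}_{k+1} \not\le \widetilde{U}_k$. Because the tail swap at $(a,b)$ leaves the point $(a,b)$ on both resulting paths, the modified paths $q'_i$ and $q'_j$ still meet at $(a,b)$; since $i \in D_k$ and $j > d_k$ sit on opposite sides of the column-$d_k$/column-$(d_k+1)$ boundary of the concatenated tableau, this common point translates to a violation of the row-wise weak-increase condition across that boundary, witnessing that $\widetilde{T}_{k+1} \sqcup \widetilde{U}_k$ is not an RPP. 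Then I would verify that Step~3-2 applied to $\pp'$ selects the same data $(i,j,a,b)$: the cut-off $s$ is unchanged because the swap does not alter which columns each path occupies in $\row_{\ge M_k+1}$, and by the standard Lindstr\"om--Gessel--Viennot argument, interchanging the subpaths of $q_i, q_j$ after their top-most mutual intersection preserves the set of their intersections, so $(a,b)$ remains the intersection with lex-maximal $(b,a)$ among admissible pairs.

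Finally, since the tail swap is its own inverse, applying it again to $(q'_i, q'_j)$ at $(a,b)$ restores $(q_i, q_j)$, giving $\widetilde{\widetilde{T}}_{k+1} = T_{k+1}$ and $\widetilde{\widetilde{U}}_k = \pd^{m_k}(U_k)$. Using Proposition~\ref{prop:inverse} to write $\pu^{M_k} \circ \pd^{m_k} = \pu^{M_{k-1}}$ and Lemma~\ref{lem:U_i} to give $\pu^{M_{k-1}}(U_k) = \col_{\ge d_k+1}(T)$, we conclude
\[
\Tab(\Phi(\pp')) = T_{r+1} \sqcup \cdots \sqcup T_{k+2} \sqcup T_{k+1} \sqcup \pu^{M_k}(\pd^{m_k}(U_k)) = T,
\]
so $\Phi(\pp') = \pp$. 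The main obstacle is the second step: carefully translating the geometric statement ``$q'_i$ and $q'_j$ share the point $(a,b)$'' into the combinatorial statement ``$\widetilde{T}_{k+1} \not\le \widetilde{U}_k$'', and verifying that no other admissible pair of paths acquires an intersection strictly above $(a,b)$ after the swap, which is what ensures the same $(a,b)$ is reproduced.
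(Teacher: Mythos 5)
Your proposal is correct and follows essentially the same route as the paper's proof: you identify $U_k(\pp')=\pu^{m_k}(\widetilde{U}_k(\pp))$ via Lemmas~\ref{lem:U_i2} and \ref{lem:U_i}, show the chain for $\pp'$ breaks at the same index $k$, argue that Step~3-2 applied to $\pp'$ reproduces the same intersection data so the tail swap undoes itself, and reassemble $T(\pp)$ using Lemma~\ref{lem:U_i}. The only difference is that you spell out the details (persistence of the intersection $(a,b)$ forcing $\widetilde{T}_{k+1}\not\le\widetilde{U}_k$, and invariance of the choice in Step~3-2) which the paper compresses into ``by the construction'' and ``easily seen to be an involution''.
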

\begin{proof}
  In this proof we will use the notation $X(\pp)$ to indicate the object $X$,
  for example $X=U_i$ or $X=\widetilde{U}_k$, in Definition~\ref{def:Phi} when we apply
  $\Phi$ to $\pp$.

  Let $\rho=\sh(T(\pp'))$. Then
  $\widetilde{U}_k(\pp)\in\RSE_{M_{k}+1}(\col_{\ge d_k+1}(\rho))$, and by
  Lemma~\ref{lem:pu}, we have
  $\pu^{m_k}(\widetilde{U}_k(\pp))\in\RSE_{M_{k-1}+1}(\col_{\ge d_k+1}(\rho))$.
  Since
  \[
    \col_{\ge d_k+1}(T(\pp'))=\pu^{M_k}(\widetilde{U}_{k}(\pp))
    =\pu^{M_{k-1}}(\pu^{m_k}(\widetilde{U}_{k}(\pp))) \in
     \pu^{M_{k-1}}(\RSE_{M_{k-1}+1}(\col_{\ge d_k+1}(\rho))),
  \]
  by Lemma~\ref{lem:U_i2}, $U_k(\pp')$ is defined. Then by Lemma~\ref{lem:U_i},
  \[
    U_k(\pp')= \pd^{M_{k-1}}(\col_{\ge d_k+1}(T(\pp')))
    =\pd^{M_{k-1}}(\pu^{M_k}(\widetilde{U}_{k}(\pp)))=\pu^{m_k}(\widetilde{U}_k(\pp)),
  \]
  or equivalently,
  \[
 \widetilde{U}_k(\pp) = \pd^{m_k}(U_k(\pp')).
  \]
  By the construction we have $\widetilde{T}_{k+1}(\pp)\not\le
  \widetilde{U}_k(\pp)$. Since $T_{k+1}(\pp')=\widetilde{T}_{k+1}(\pp)$, we obtain
\[
T_{k+1}(\pp') \not\le \pd^{m_k}(U_k(\pp')).
\]

Therefore in the construction of $\Phi(\pp')$,
$U_1(\pp'),U_2(\pp'),\dots,U_k(\pp')$ are defined but not $U_{k+1}(\pp')$.
Observe that in Step 3-1 we compute 
\[
\Tab(\mathbf{q}(\pp')) = \row_{\ge M_k+1}(T_{k+1}(\pp')\sqcup \pd^{m_k}(U_k(\pp'))) =
  \row_{\ge M_k+1}(\widetilde{T}_{k+1}(\pp)\sqcup \widetilde{U}_k(\pp)))
=\Tab(\mathbf{q'}(\pp)).
\]
 Since the map in Step 3-2 sending $\mathbf{q}$ to
$\mathbf{q}'$ is easily seen to be an involution, we obtain that
$\Tab(\mathbf{q'}(\pp')) = \Tab(\mathbf{q}(\pp))$ and therefore 
\[
  \widetilde{T}_{k+1}(\pp') = T_{k+1}(\pp), \qquad
  \widetilde{U}_{k}(\pp') = \pd^{m_k}(U_k(\pp)).
\]
Then
\begin{align*}
  T'(\pp')&=T_r(\pp')\sqcup \cdots \sqcup T_{k+2}(\pp')\sqcup 
            \widetilde{T}_{k+1}(\pp')\sqcup \pu^{M_k}(\widetilde{U}_{k}(\pp'))\\
          &=T_r(\pp)\sqcup \cdots \sqcup T_{k+2}(\pp)\sqcup 
            T_{k+1}(\pp)\sqcup \pu^{M_{k-1}}(U_{k}(\pp)).
\end{align*}
By Lemma~\ref{lem:U_i}, $\pu^{M_{k-1}}(U_{k}(\pp))=\col_{\ge d_k+1}(T(\pp))$,
and we obtain $T'(\pp')=T(\pp)$. This means that $\Phi(\pp')=\pp$ as desired.
\end{proof}

Now we can prove Theorem~\ref{thm:involution} easily. By Lemmas~\ref{lem:fixed
  points} and \ref{lem:involution}, $\Phi$ is an involution on $\lsnc$ with the
desired fixed point set. Suppose that $\Phi(\pp)=\pp'$ and $\pp\ne\pp'$.
Lemma~\ref{lem:VT} and the third assertion of Lemma~\ref{lem:sign-reversing}
show that $\sign(\type(\pp')) = -\sign(\type(\pp))$. Note that
$\wt(\pp)=\sign(\type(\pp))\wt(T)$ and $\wt(\pp')=\sign(\type(\pp'))\wt(T')$. By
the construction of $\Phi$, we have $\wt(T)=\wt(T')$, and hence
$\wt(\pp')=-\wt(\pp)$, which completes the proof.

\section{An example of the involution $\Phi$}
\label{sec:example-phi}

In this section we give a concrete example of the involution $\Phi$ applied to
$\pp$ which is not a fixed point.

  Let $n=6$, $\lambda=(6,6,5,5,5,5,5,5,4)$ and $\mu=(5,3,3,1,1,1)$. Then
  $\lambda'=(9,9,9,9,8,2)$ and $\mu'=(6,3,3,1,1)$. Let $A_i=(\mu'_i+n-1,0)$ and
  $B_i=(\lambda'_i+n-i,2\omega)$. Consider the $n$-path $\pp\in \lsnc$ in
  Figure~\ref{fig:ppT}. Note that $\type(\pp)=\pi=(3,2,4,1,5,6)$. We construct
  $\Phi(\pp)=\pp'$ as follows.

 \begin{figure}
   \centering
\begin{tikzpicture}[x=0.4cm, y=0.4cm]
\RSEgrid{15}96
\node[below] at (0,0){$A_6$};
\node[below] at (2,0){$A_5$};
\node[below] at (3,0){$A_4$};
\node[below] at (6,0){$A_3$};
\node[below] at (7,0){$A_2$};
\node[below] at (11,0){$A_1$};
\node[above] at (2,19){$B_6$};
\node[above] at (9,19){$B_5$};
\node[above] at (11,19){$B_4$};
\node[above] at (12,19){$B_3$};
\node[above] at (13,19){$B_2$};
\node[above] at (14,19){$B_1$};
\filldraw (0,0) circle [radius=2pt];
\filldraw (2,0) circle [radius=2pt];
\filldraw (3,0) circle [radius=2pt];
\filldraw (6,0) circle [radius=2pt];
\filldraw (7,0) circle [radius=2pt];
\filldraw (11,0) circle [radius=2pt];
\filldraw (2,19) circle [radius=2pt];
\filldraw (9,19) circle [radius=2pt];
\filldraw (11,19) circle [radius=2pt];
\filldraw (12,19) circle [radius=2pt];
\filldraw (13,19) circle [radius=2pt];
\filldraw (14,19) circle [radius=2pt];

  \draw[line width=2pt] (0,0)--
  node[above]{1} ++(1,1)--
  node[above]{2} ++(1,1)--
  (2,19);

  \draw[line width=2pt,red] (2,0)--
  ++(0,1)--
  node[above]{2} ++(1,1)--
  node[above]{3} ++(1,1)--
  ++(0,3)--
  node[above]{7} ++(1,1)--
  node[above]{8} ++(1,1)--
  ++(0,4)--
  node[above]{$2^*$} ++(1,1)--
  node[above]{$3^*$} ++(1,1)--
  ++(0,1)--
  node[above]{$5^*$} ++(1,1)--
  (9,19);

  \draw[line width=2pt, red] (3,0)--
  ++(0,1)--
  node[below]{2} ++(1,1)--
  node[above]{3} ++(1,1)--
  node[above]{4} ++(1,1)--
  node[above]{5} ++(1,1)--
  node[above]{6} ++(1,1)--
  node[above]{7} ++(1,1)--
  node[above]{8} ++(1,1)--
  node[above]{9} ++(1,1)--
  ++(0,4)--
  node[above]{$3^*$} ++(1,1)--
  node[above]{$4^*$} ++(1,1)--
  node[above]{$5^*$} ++(1,1)--
  (14,19);
  
  \draw[line width=2pt,dashed] (7,0)--
  ++(0,2)--
  node[above]{$3$} ++(1,1)--
  ++(0,2)--
  node[below]{$6$} ++(1,1)--
  ++(0,1)--
  node[below]{$8$} ++(1,1)--
  ++(0,6)--
  node[above]{$4^*$} ++(1,1)--
  node[above]{$5^*$} ++(1,1)--
  node[above]{$6^*$} ++(1,1)--
  (13,19);

  \draw[line width=2pt,dashed] (6,0)--
  ++(0,2)--
  node[above]{$3$} ++(1,1)--
  ++(0,4)--
  node[above]{$8$} ++(1,1)--
  node[above]{$9$} ++(1,1)--
  ++(0,5)--
  node[above]{$4^*$} ++(1,1)--
  ++(0,1)--
  node[above]{$6^*$} ++(1,1)--
  (11,19); 

  \draw[line width=2pt, blue] (11,0)--
  ++(0,1)--
  node[above,black]{$2$} ++(1,1)--
  (12,19);
\end{tikzpicture}\qquad \qquad
   \begin{ytableau}
     \none[T_4] & \none[T_3] & \none[T_3] & \none[T_2] & \none[T_2] & \none[T_1]\\
     \muentry1 & \muentry1& \muentry1& \muentry1 & \muentry1 & 1 \\
     \muentry2 & \muentry2 & \muentry2& 2 & 2 & 2\\
     \muentry3 & \muentry3 & \muentry3 & 3 & 3\\
     \muentry4 & 3 & 3 & 4 & 7 \\
     \muentry5 & 6 & 8 & 5 & 8 \\
     \muentry6 & 8 & 9 & 6 & 2^* \\
     2 & 4^* & 4^* & 7 & 3^* \\
     \none & 5^* & 6^* & 8 & 5^*\\
     \none & 6^* & \none & 9 \\
     \none & \none & \none & 3^* \\
     \none & \none & \none & 4^* \\
     \none & \none & \none & 5^* \\
     \none
   \end{ytableau} 
   \caption{An $n$-path $\pp\in \lsnc$ on the left and the vertical tableau
     $T=\Tab(\pp)\in \VT(\pi(\lambda)/\mu)$ on the right, where $n=6$,
     $\lambda=(6,6,5,5,5,5,5,5,4)$, $\mu=(5,3,3,1,1,1)$, and
     $\pi=(3,2,4,1,5,6)$. Each $p_i$ is a path from $A_i=(\mu'_i+n-i,0)$ to
     $B_{\pi_i}=(\lambda'_{\pi_i}+n-\pi_i)$. The gray cells are those in $\mu$.
     At the top of each column is written the tableau $T_i$ containing that
     column.}
   \label{fig:ppT}
 \end{figure}
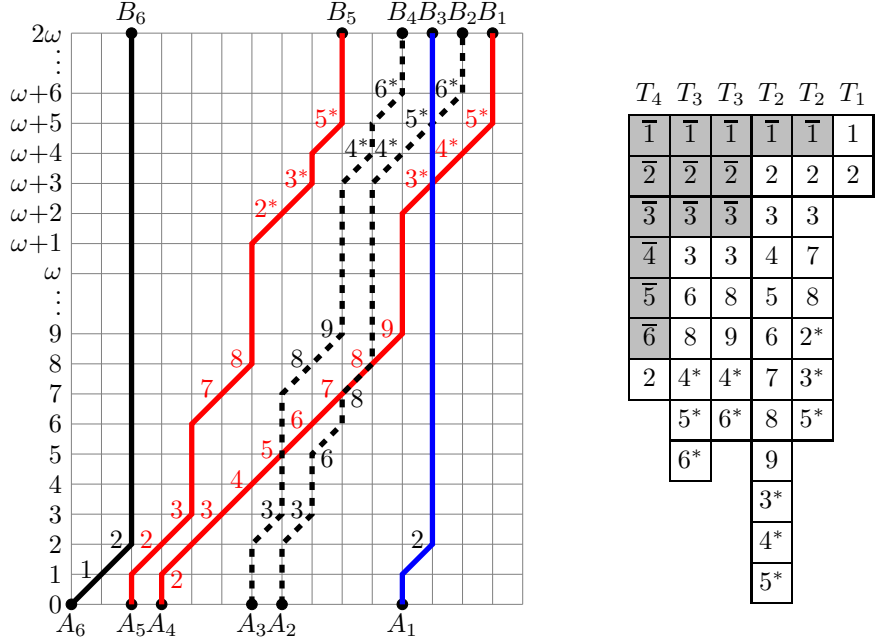

  In Step 1, we find $T=\Tab(\pp)$ and express $T=T_4\sqcup T_3 \sqcup T_2\sqcup
  T_1$ as shown in Figure~\ref{fig:ppT}. Then $T\in\VT(\pi(\lambda)/\mu)$.

  \begin{figure}
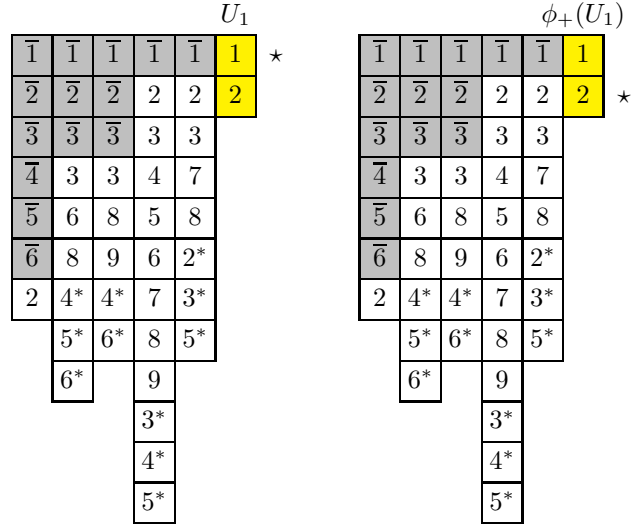

    \centering
   \begin{ytableau}
     \none & \none & \none& \none& \none & \none[U_1]\\
     \muentry1 & \muentry1& \muentry1& \muentry1 & \muentry1 & *(yellow)1 &\none[\star]\\
     \muentry2 & \muentry2 & \muentry2& 2 & 2 & *(yellow)2\\
     \muentry3 & \muentry3 & \muentry3 & 3 & 3\\
     \muentry4 & 3 & 3 & 4 & 7 \\
     \muentry5 & 6 & 8 & 5 & 8 \\
     \muentry6 & 8 & 9 & 6 & 2^* \\
     2 & 4^* & 4^* & 7 & 3^* \\
     \none & 5^* & 6^* & 8 & 5^*\\
     \none & 6^* & \none & 9 \\
     \none & \none & \none & 3^* \\
     \none & \none & \none & 4^* \\
     \none & \none & \none & 5^*
   \end{ytableau} \qquad
   \begin{ytableau}
     \none & \none & \none& \none& \none & \none[\pd(U_1)]\\
     \muentry1 & \muentry1& \muentry1& \muentry1 & \muentry1 & *(yellow)1 \\
     \muentry2 & \muentry2 & \muentry2& 2 & 2 & *(yellow)2&\none[\star]\\
     \muentry3 & \muentry3 & \muentry3 & 3 & 3\\
     \muentry4 & 3 & 3 & 4 & 7 \\
     \muentry5 & 6 & 8 & 5 & 8 \\
     \muentry6 & 8 & 9 & 6 & 2^* \\
     2 & 4^* & 4^* & 7 & 3^* \\
     \none & 5^* & 6^* & 8 & 5^*\\
     \none & 6^* & \none & 9 \\
     \none & \none & \none & 3^* \\
     \none & \none & \none & 4^* \\
     \none & \none & \none & 5^* 
   \end{ytableau}
    \caption{$U_1=T_1$ is the last column in the left diagram and
     $\pd^{m_1}(U_1)=\pd(U_1)$ is the last column in the right diagram. The
     level of each RSE-tableau is marked by a star.}
    \label{fig:U0}
  \end{figure}

  \begin{figure}
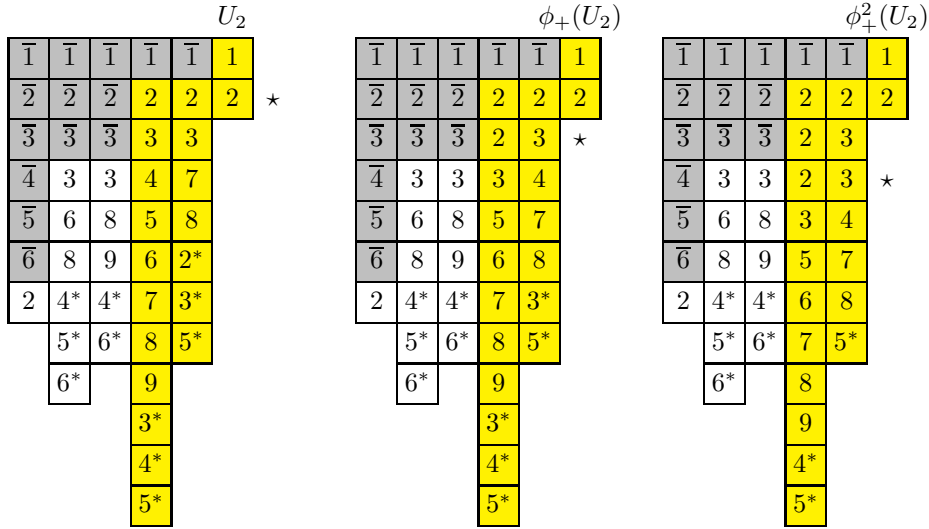

   \centering
   \begin{ytableau}
     \none & \none & \none& \none& \none & \none[U_2]\\
     \muentry1 & \muentry1& \muentry1& \muentry1 & \muentry1 & *(yellow)1 \\
     \muentry2 & \muentry2 & \muentry2& *(yellow)2 & *(yellow)2 & *(yellow)2&\none[\star]\\
     \muentry3 & \muentry3 & \muentry3 & *(yellow)3 & *(yellow)3\\
     \muentry4 & 3 & 3 & *(yellow)4 & *(yellow)7 \\
     \muentry5 & 6 & 8 & *(yellow)5 & *(yellow)8 \\
     \muentry6 & 8 & 9 & *(yellow)6 & *(yellow)2^* \\
     2 & 4^* & 4^* & *(yellow)7 & *(yellow)3^* \\
     \none & 5^* & 6^* & *(yellow)8 & *(yellow)5^*\\
     \none & 6^* & \none & *(yellow)9 \\
     \none & \none & \none & *(yellow)3^* \\
     \none & \none & \none & *(yellow)4^* \\
     \none & \none & \none & *(yellow)5^* \\
   \end{ytableau} \qquad
   \begin{ytableau}
     \none & \none & \none& \none& \none & \none[\pd(U_2)]\\
     \muentry1 & \muentry1& \muentry1& \muentry1 & \muentry1 & *(yellow)1 \\
     \muentry2 & \muentry2 & \muentry2& *(yellow)2 & *(yellow)2 & *(yellow)2\\
     \muentry3 & \muentry3 & \muentry3 & *(yellow)2 & *(yellow)3&\none[\star]\\
     \muentry4 & 3 & 3 & *(yellow)3 & *(yellow)4 \\
     \muentry5 & 6 & 8 & *(yellow)5 & *(yellow)7 \\
     \muentry6 & 8 & 9 & *(yellow)6 & *(yellow)8 \\
     2 & 4^* & 4^* & *(yellow)7 & *(yellow)3^* \\
     \none & 5^* & 6^* & *(yellow)8 & *(yellow)5^*\\
     \none & 6^* & \none & *(yellow)9 \\
     \none & \none & \none & *(yellow)3^* \\
     \none & \none & \none & *(yellow)4^* \\
     \none & \none & \none & *(yellow)5^* \\
   \end{ytableau} \qquad
   \begin{ytableau}
     \none & \none & \none& \none& \none & \none[\pd^2(U_2)]\\
     \muentry1 & \muentry1& \muentry1& \muentry1 & \muentry1 & *(yellow)1 \\
     \muentry2 & \muentry2 & \muentry2& *(yellow)2 & *(yellow)2 & *(yellow)2\\
     \muentry3 & \muentry3 & \muentry3 & *(yellow)2 & *(yellow)3\\
     \muentry4 & 3 & 3 & *(yellow)2 & *(yellow)3 &\none[\star]\\
     \muentry5 & 6 & 8 & *(yellow)3 & *(yellow)4 \\
     \muentry6 & 8 & 9 & *(yellow)5 & *(yellow)7 \\
     2 & 4^* & 4^* & *(yellow)6 & *(yellow)8 \\
     \none & 5^* & 6^* & *(yellow)7 & *(yellow)5^*\\
     \none & 6^* & \none & *(yellow)8 \\
     \none & \none & \none & *(yellow)9 \\
     \none & \none & \none & *(yellow)4^* \\
     \none & \none & \none & *(yellow)5^* \\
   \end{ytableau} 
   \caption{$U_2=T_2\sqcup \pd^{m_1}(U_1)$ is the last three columns in the left
     diagram, $\pd(U_2)$ is the last three columns in the middle diagram, and
     $\pd^{m_2}(U_2)=\pd^{2}(U_2)$ is the last three columns in the right
     diagram.}
   \label{fig:Phi}
 \end{figure}

  In Step 2, we find $U_1$ and $\pd^{m_1}(U_1)=\pd(U_1)$ as in
  Figure~\ref{fig:U0}, and $U_2=T_2\sqcup \pd^{m_1}(U_1)$ and $\pd^{m_2}(U_2)$
  as in Figure~\ref{fig:Phi}. Observe that $T_3$, which is columns $2$ and $3$
  in the right diagram of Figure~\ref{fig:Phi}, does not satisfy $T_3\le
  \pd^{m_2}(U_2)$. Therefore $U_3$ is not defined and Step 2 is finished.

  In Step 3, $k=2$ is the smallest integer such that $T_{k+1}\not\le
  \pd^{m_k}(U_{k})$.

 \begin{figure}
   \centering
   \begin{ytableau}
    \muentry{}&  \muentry{}&\muentry{}&\muentry{}&\muentry{}&\muentry{}\\ 
    \muentry{}&  \muentry{}&\muentry{}&\muentry{}&\muentry{}&\muentry{}\\ 
    \muentry{}&  \muentry{}&\muentry{}&\muentry{}&\muentry{}\\ 
     \muentry{} & *(yellow)3 & *(yellow)3 & *(yellow)2 & *(yellow)3 \\
     \muentry{} & *(yellow)6 & *(yellow)8 & *(yellow)3 & *(yellow)4 \\
     \muentry{} & *(yellow)8 & *(yellow)9 & *(yellow)5 & *(yellow)7 \\
     \muentry{} & *(yellow)4^* & *(yellow)4^* & *(yellow)6 & *(yellow)8 \\
     \none & *(yellow)5^* & *(yellow)6^* & *(yellow)7 & *(yellow)5^*\\
     \none & *(yellow)6^* & \none & *(yellow)8 \\
     \none & \none & \none & *(yellow)9 \\
     \none & \none & \none & *(yellow)4^* \\
     \none & \none & \none & *(yellow)5^* \\
     \none
   \end{ytableau} \qquad
\begin{tikzpicture}[x=0.4cm, y=0.4cm]
\RSEgrid{15}96
\node[below] at (2,0){$C_6$};
\node[below] at (4,0){$C_5$};
\node[below] at (5,0){$C_4$};
\node[below] at (6,0){$C_3$};
\node[below] at (7,0){$C_2$};
\node[below] at (12,0){$C_1$};
\node[above] at (2,19){$B_6$};
\node[above] at (9,19){$B_5$};
\node[above] at (11,19){$B_4$};
\node[above] at (12,19){$B_3$};
\node[above] at (13,19){$B_2$};
\node[above] at (14,19){$B_1$};
\filldraw (2,0) circle [radius=2pt];
\filldraw (4,0) circle [radius=2pt];
\filldraw (5,0) circle [radius=2pt];
\filldraw (6,0) circle [radius=2pt];
\filldraw (7,0) circle [radius=2pt];
\filldraw (12,0) circle [radius=2pt];
\filldraw (2,19) circle [radius=2pt];
\filldraw (9,19) circle [radius=2pt];
\filldraw (11,19) circle [radius=2pt];
\filldraw (12,19) circle [radius=2pt];
\filldraw (13,19) circle [radius=2pt];
\filldraw (14,19) circle [radius=2pt];

  \draw[line width=1pt] (2,0)-- (2,19);

  \draw[line width=2pt,red] (4,0)--
  ++(0,2)--
  node[above]{$3$} ++(1,1)--
  node[above]{$4$} ++(1,1)--
  ++(0,2)--
  node[above]{$7$} ++(1,1)--
  node[above]{$8$} ++(1,1)--
  ++(0,7)--
  node[above]{$5^*$} ++(1,1)--
  (9,19);

  \draw[line width=2pt,red] (5,0)--
  ++(0,1)--
  node[above]{$2$} ++(1,1)--
  node[above]{$3$} ++(1,1)--
  ++(0,1)--
  node[above]{$5$} ++(1,1)--
  node[above]{$6$} ++(1,1)--
  node[below]{$7$} ++(1,1)--
  node[above]{$8$} ++(1,1)--
  node[above]{$9$} ++(1,1)--
  ++(0,5)--
  node[above]{$4^*$} ++(1,1)--
  node[above]{$5^*$} ++(1,1)--
  (14,19); 

  \draw[line width=2pt,dashed] (6,0)--
  ++(0,2)--
  node[below]{$3$} ++(1,1)--
  ++(0,4)--
  node[below]{$8$} ++(1,1)--
  node[above]{$9$} ++(1,1)--
  ++(0,5)--
  node[above]{$4^*$} ++(1,1)--
  ++(0,1)--
  node[above]{$6^*$} ++(1,1)--
  (11,19);

  \draw[line width=2pt,dashed] (7,0)--
  ++(0,2)--
  node[above]{$3$} ++(1,1)--
  ++(0,2)--
  node[above]{$6$} ++(1,1)--
  ++(0,1)--
  node[above]{$8$} ++(1,1)--
  ++(0,6)--
  node[above]{$4^*$} ++(1,1)--
  node[above]{$5^*$} ++(1,1)--
  node[above]{$6^*$} ++(1,1)--
  (13,19); 

  \draw[line width=1pt,blue] (12,0)--(12,19);

  \filldraw [green!70!black] (6,2) circle [radius=2pt];
  \filldraw [green!70!black] (7,3) circle [radius=2pt];
  \filldraw [green!70!black] (7,4) circle [radius=2pt];
  \filldraw [green!70!black] (7,7) circle [radius=2pt];
  \filldraw [green!70!black] (8,8) circle [radius=2pt];
  \draw [line width=1pt, blue] (8,8) circle [radius=4pt];
  \filldraw [green!70!black] (8,5) circle [radius=2pt];
  \filldraw [green!70!black] (9,6) circle [radius=2pt];
\end{tikzpicture}
\caption{The vertical tableau $\row_{\ge M_2+1}(T_3\sqcup \pd^{m_2}(U_2))\in
  \VT(\pi(\lambda)/\gamma)$ and the corresponding $n$-path $\mathbf{q}$. The chosen
  intersection $(a,b)$ is circled.}
   \label{fig:bad}
 \end{figure}
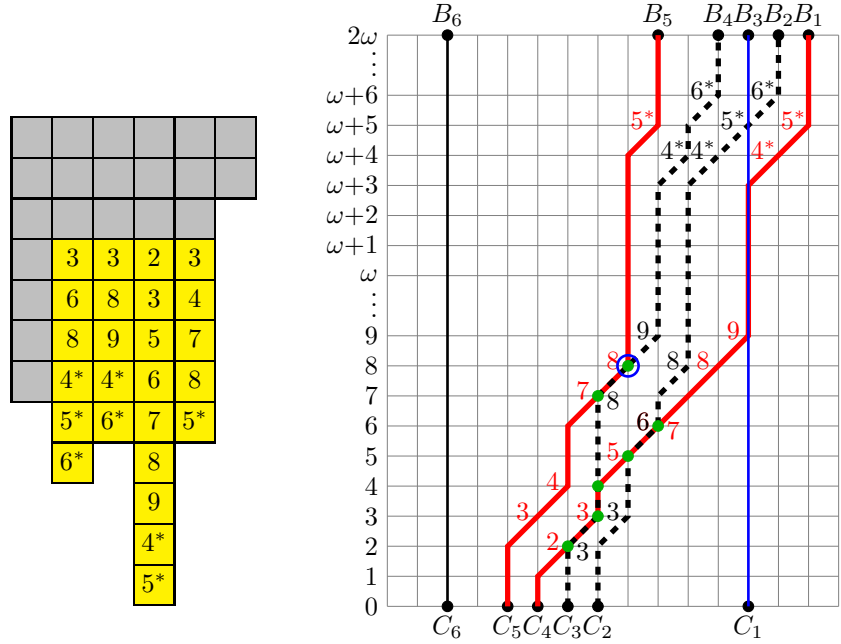

  In Step 3-1, we have $s=5$ and $\gamma=(6,6,5,1,1,1,1)$. The vertical tableau
  $\row_{\ge M_k+1}(T_{k}\sqcup \pd^{m_k}(U_{k-1})) \in\VT(\pi(\lambda)/\gamma)$ and
  its corresponding $n$-path $\mathbf{q}$ are shown in Figure~\ref{fig:bad}.

 \begin{figure}
   \centering
\begin{tikzpicture}[x=0.4cm, y=0.4cm]
\RSEgrid{15}96
\node[below] at (2,0){$C_6$};
\node[below] at (4,0){$C_5$};
\node[below] at (5,0){$C_4$};
\node[below] at (6,0){$C_3$};
\node[below] at (7,0){$C_2$};
\node[below] at (12,0){$C_1$};
\node[above] at (2,19){$B_6$};
\node[above] at (9,19){$B_5$};
\node[above] at (11,19){$B_4$};
\node[above] at (12,19){$B_3$};
\node[above] at (13,19){$B_2$};
\node[above] at (14,19){$B_1$};
\filldraw (2,0) circle [radius=2pt];
\filldraw (4,0) circle [radius=2pt];
\filldraw (5,0) circle [radius=2pt];
\filldraw (6,0) circle [radius=2pt];
\filldraw (7,0) circle [radius=2pt];
\filldraw (12,0) circle [radius=2pt];
\filldraw (2,19) circle [radius=2pt];
\filldraw (9,19) circle [radius=2pt];
\filldraw (11,19) circle [radius=2pt];
\filldraw (12,19) circle [radius=2pt];
\filldraw (13,19) circle [radius=2pt];
\filldraw (14,19) circle [radius=2pt];

  \draw[line width=1pt] (2,0)--
  (2,19);

  \draw[line width=2pt,red] (4,0)--
  ++(0,2)--
  node[above]{$3$} ++(1,1)--
  node[above]{$4$} ++(1,1)--
  ++(0,2)--
  node[above]{$7$} ++(1,1)--
  node[above]{$8$} ++(1,1)--
  node[above,black]{$9$} ++(1,1)--
  ++(0,5)--
  node[above,black]{$4^*$} ++(1,1)--
  ++(0,1)--
  node[above,black]{$6^*$} ++(1,1)--
  (11,19);

  \draw[line width=2pt,red] (5,0)--
  ++(0,1)--
  node[above]{$2$} ++(1,1)--
  node[above]{$3$} ++(1,1)--
  ++(0,1)--
  node[above]{$5$} ++(1,1)--
  node[above]{$6$} ++(1,1)--
  node[below]{$7$} ++(1,1)--
  node[above]{$8$} ++(1,1)--
  node[above]{$9$} ++(1,1)--
  ++(0,5)--
  node[above]{$4^*$} ++(1,1)--
  node[above]{$5^*$} ++(1,1)--
  (14,19); 

  \draw[line width=2pt,dashed] (6,0)--
  ++(0,2)--
  node[below]{$3$} ++(1,1)--
  ++(0,4)--
  node[below]{$8$} ++(1,1)--
  ++(0,7)--
  node[above]{$5^*$} ++(1,1)--
  (9,19);

  \draw[line width=2pt,dashed] (7,0)--
  ++(0,2)--
  node[above]{$3$} ++(1,1)--
  ++(0,2)--
  node[below]{$6$} ++(1,1)--
  ++(0,1)--
  node[above]{$8$} ++(1,1)--
  ++(0,6)--
  node[above]{$4^*$} ++(1,1)--
  node[above]{$5^*$} ++(1,1)--
  node[above]{$6^*$} ++(1,1)--
  (13,19); 

  \draw[line width=1pt,blue] (12,0)--(12,19);
  \draw [line width=1pt, blue] (8,8) circle [radius=4pt];
  \filldraw [green!70!black] (8,8) circle [radius=2pt];
\end{tikzpicture}\qquad
   \begin{ytableau}
    \muentry{}&  \muentry{}&\muentry{}&\muentry{}&\muentry{}&\muentry{}\\ 
    \muentry{}&  \muentry{}&\muentry{}&\muentry{}&\muentry{}&\muentry{}\\ 
    \muentry{}&  \muentry{}&\muentry{}&\muentry{}&\muentry{}\\ 
     \muentry{} & *(yellow)3 & *(yellow)3 & *(yellow)2 & *(yellow)3 \\
     \muentry{} & *(yellow)6 & *(yellow)8 & *(yellow)3 & *(yellow)4 \\
     \muentry{} & *(yellow)8 & *(yellow)5^* & *(yellow)5 & *(yellow)7 \\
     \muentry{} & *(yellow)4^* & \none & *(yellow)6 & *(yellow)8 \\
     \none & *(yellow)5^* & \none & *(yellow)7 & *(yellow)9\\
     \none & *(yellow)6^* & \none & *(yellow)8 & *(yellow)4^*\\
     \none & \none & \none & *(yellow)9 & *(yellow)6^*\\
     \none & \none & \none & *(yellow)4^* \\
     \none & \none & \none & *(yellow)5^* \\
     \none
   \end{ytableau} 
   \caption{The $n$-path $\mathbf{q}'$ and the corresponding vertical tableau
     $\Tab(\mathbf{q}')\in \VT(\pi'(\lambda)/\gamma)$, where
     $\pi'=\pi(4,5)$.}
   \label{fig:bad2}
 \end{figure}
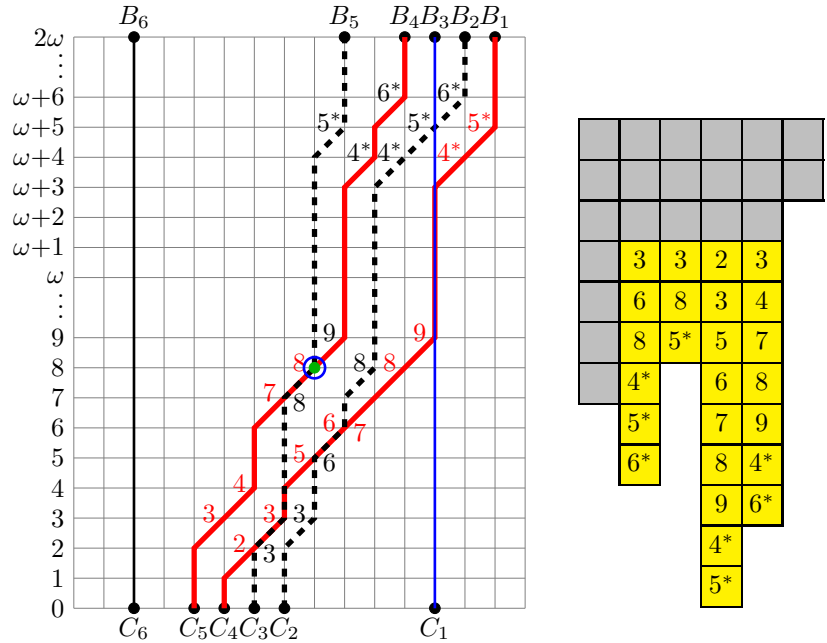

  In Step 3-2, there are 7 intersections among $\{q_i: d_{k+1}+1\le i\le
  s\}=\{q_2,q_3,q_4,q_5\}$ and the intersection $(a,b)=(8,8)$ of $q_i=q_3$ and
  $q_j=q_5$ is chosen. Then $\mathbf{q}'$ is obtained from $\mathbf{q}$ by
  exchanging the subpaths of $q_3$ and $q_5$ after $(8,8)$. See
  Figure~\ref{fig:bad2} for $\mathbf{q'}$ and its corresponding vertical
  tableau.

 \begin{figure}
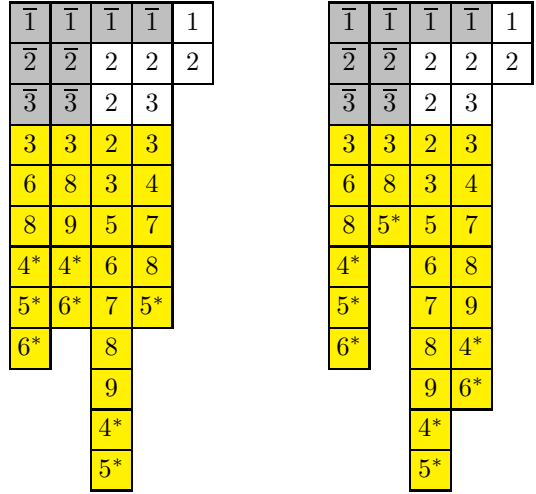

   \centering
   \begin{ytableau}
      \muentry1& \muentry1& \muentry1 & \muentry1 & 1 \\
      \muentry2 & \muentry2& 2 & 2 & 2\\
      \muentry3 & \muentry3 & 2 & 3\\
      *(yellow)3 & *(yellow)3 & *(yellow)2 & *(yellow)3 \\
      *(yellow)6 & *(yellow)8 & *(yellow)3 & *(yellow)4 \\
      *(yellow)8 & *(yellow)9 & *(yellow)5 & *(yellow)7 \\
      *(yellow)4^* & *(yellow)4^* & *(yellow)6 & *(yellow)8 \\
      *(yellow)5^* & *(yellow)6^* & *(yellow)7 & *(yellow)5^*\\
      *(yellow)6^* & \none & *(yellow)8 \\
      \none & \none & *(yellow)9 \\
      \none & \none & *(yellow)4^* \\
      \none & \none & *(yellow)5^* \\
   \end{ytableau} \qquad\qquad
   \begin{ytableau}
      \muentry1& \muentry1& \muentry1 & \muentry1 & 1 \\
      \muentry2 & \muentry2& 2 & 2 & 2\\
      \muentry3 & \muentry3 & 2 & 3\\
      *(yellow)3 & *(yellow)3 & *(yellow)2 & *(yellow)3 \\
      *(yellow)6 & *(yellow)8 & *(yellow)3 & *(yellow)4 \\
      *(yellow)8 & *(yellow)5^* & *(yellow)5 & *(yellow)7 \\
      *(yellow)4^* & \none & *(yellow)6 & *(yellow)8 \\
      *(yellow)5^* & \none & *(yellow)7 & *(yellow)9\\
      *(yellow)6^* & \none & *(yellow)8 & *(yellow)4^*\\
      \none & \none & *(yellow)9 & *(yellow)6^*\\
      \none & \none & *(yellow)4^* \\
      \none & \none & *(yellow)5^* \\
    \end{ytableau}
    \caption{The left diagram is
      $T_{k+1}\sqcup\pd^{m_k}(U_k)=T_3\sqcup\pd^{2}(U_2)$, where the part equal
      to $\Tab(\mathbf{q})$ is colored yellow. The right diagram is 
      $\widetilde{T}_{k+1}\sqcup\widetilde{U}_{k}
      =\widetilde{T}_{3}\sqcup\widetilde{U}_{2}$, where the part equal to
      $\Tab(\mathbf{q'})$ is colored yellow.}
   \label{fig:Phi2}
 \end{figure}

 \begin{figure}
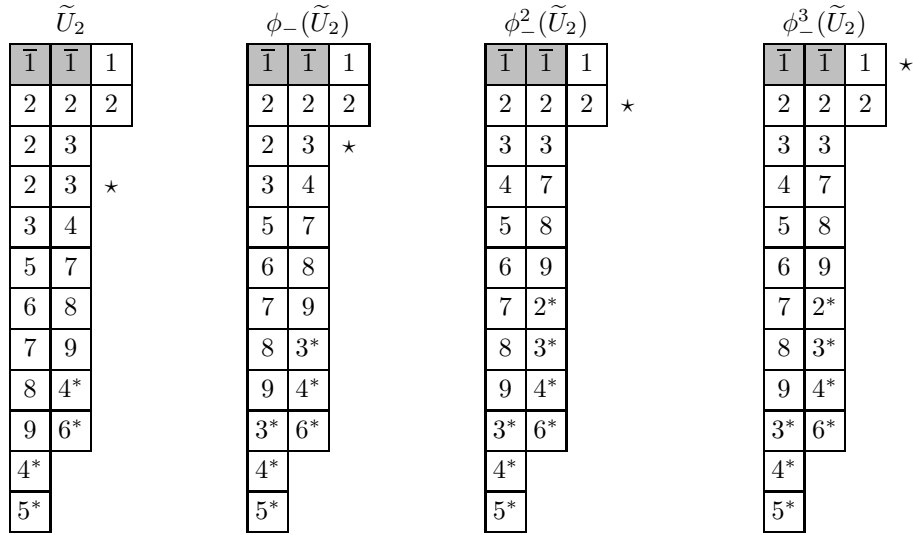

   \centering
   \begin{ytableau}
      \none & \none[\widetilde{U}_2]& \none\\
      \muentry1 & \muentry1 & 1 \\
      2 & 2 & 2\\
      2 & 3\\
      2 & 3 & \none[\star]\\
      3 & 4 \\
      5 & 7 \\
      6 & 8 \\
      7 & 9\\
      8 & 4^*\\
      9 & 6^*\\
      4^* \\
      5^* \\
   \end{ytableau} \qquad\qquad
   \begin{ytableau}
      \none & \none[\pu(\widetilde{U}_2)]& \none\\
      \muentry1 & \muentry1 & 1 \\
      2 & 2 & 2\\
      2 & 3 & \none[\star]\\
      3 & 4 \\
      5 & 7 \\
      6 & 8 \\
      7 & 9\\
      8 & 3^*\\
      9 & 4^*\\
      3^* & 6^*\\
      4^* \\
      5^* \\
   \end{ytableau} \qquad\qquad
   \begin{ytableau}
      \none & \none[\pu^2(\widetilde{U}_2)]& \none\\
      \muentry1 & \muentry1 & 1 \\
      2 & 2 & 2&\none[\star]\\
      3 & 3  \\
      4 & 7 \\
      5 & 8 \\
      6 & 9 \\
      7 & 2^*\\
      8 & 3^*\\
      9 & 4^*\\
      3^* & 6^*\\
      4^* \\
      5^* \\
   \end{ytableau} \qquad\qquad
   \begin{ytableau}
      \none & \none[\pu^3(\widetilde{U}_2)]& \none\\
      \muentry1 & \muentry1 & 1&\none[\star] \\
      2 & 2 & 2\\
      3 & 3  \\
      4 & 7 \\
      5 & 8 \\
      6 & 9 \\
      7 & 2^*\\
      8 & 3^*\\
      9 & 4^*\\
      3^* & 6^*\\
      4^* \\
      5^* \\
   \end{ytableau}
   \caption{The RSE-tableaux $\widetilde{U}_{k}=\widetilde{U}_2,
     \pu(\widetilde{U}_2), \pu^2(\widetilde{U}_2)$ and
     $\pu^3(\widetilde{U}_2)=\pu^{M_k}(\widetilde{U}_k)$ from left to right. The
     level of each RSE-tableau is marked by a star.}
   \label{fig:Phi3}
 \end{figure}

 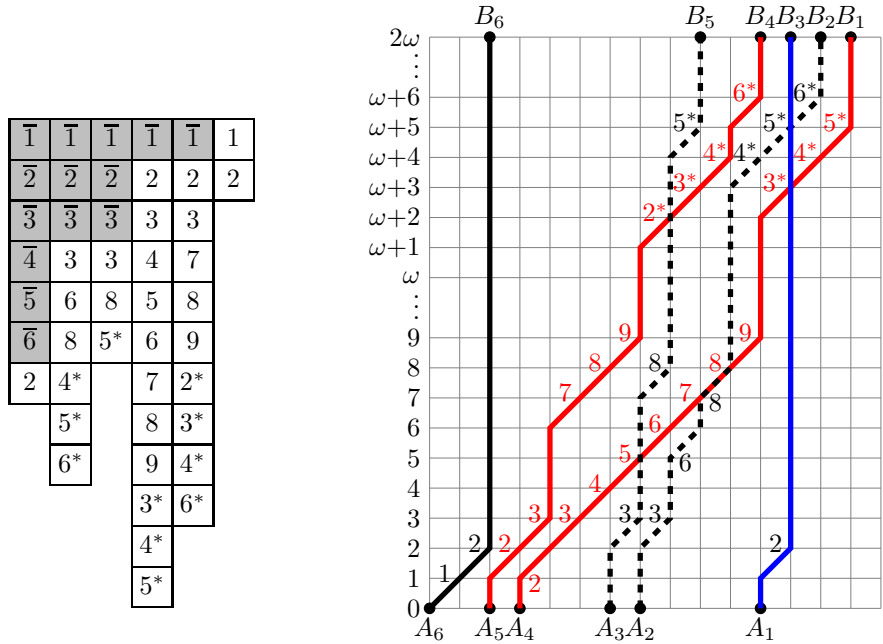
\begin{figure}
   \centering
   \begin{ytableau}
     \muentry1 & \muentry1& \muentry1& \muentry1 & \muentry1 & 1 \\
     \muentry2 & \muentry2 & \muentry2& 2 & 2 & 2\\
     \muentry3 & \muentry3 & \muentry3 & 3 & 3  \\
     \muentry4 & 3 & 3 & 4 & 7 \\
     \muentry5 & 6 & 8 & 5 & 8 \\
     \muentry6 & 8 & 5^* & 6 & 9 \\
     2 & 4^* & \none & 7 & 2^*\\
     \none & 5^* & \none & 8 & 3^*\\
     \none & 6^* & \none & 9 & 4^*\\
     \none & \none & \none & 3^* & 6^*\\
     \none & \none & \none & 4^* \\
     \none & \none & \none & 5^* \\
     \none
   \end{ytableau}\qquad\qquad
\begin{tikzpicture}[x=0.4cm, y=0.4cm]
\RSEgrid{15}96
\node[below] at (0,0){$A_6$};
\node[below] at (2,0){$A_5$};
\node[below] at (3,0){$A_4$};
\node[below] at (6,0){$A_3$};
\node[below] at (7,0){$A_2$};
\node[below] at (11,0){$A_1$};
\node[above] at (2,19){$B_6$};
\node[above] at (9,19){$B_5$};
\node[above] at (11,19){$B_4$};
\node[above] at (12,19){$B_3$};
\node[above] at (13,19){$B_2$};
\node[above] at (14,19){$B_1$};
\filldraw (0,0) circle [radius=2pt];
\filldraw (2,0) circle [radius=2pt];
\filldraw (3,0) circle [radius=2pt];
\filldraw (6,0) circle [radius=2pt];
\filldraw (7,0) circle [radius=2pt];
\filldraw (11,0) circle [radius=2pt];
\filldraw (2,19) circle [radius=2pt];
\filldraw (9,19) circle [radius=2pt];
\filldraw (11,19) circle [radius=2pt];
\filldraw (12,19) circle [radius=2pt];
\filldraw (13,19) circle [radius=2pt];
\filldraw (14,19) circle [radius=2pt];

  \draw[line width=2pt] (0,0)--
  node[above]{1} ++(1,1)--
  node[above]{2} ++(1,1)--
  (2,19);

  \draw[line width=2pt,red] (2,0)--
  ++(0,1)--
  node[above]{2} ++(1,1)--
  node[above]{3} ++(1,1)--
  ++(0,3)--
  node[above]{7} ++(1,1)--
  node[above]{8} ++(1,1)--
  node[above]{9} ++(1,1)--
  ++(0,3)--
  node[above]{$2^*$} ++(1,1)--
  node[above]{$3^*$} ++(1,1)--
  node[above]{$4^*$} ++(1,1)--
  ++(0,1)--
  node[above]{$6^*$} ++(1,1)--
  (11,19);

  \draw[line width=2pt, red] (3,0)--
  ++(0,1)--
  node[below]{2} ++(1,1)--
  node[above]{3} ++(1,1)--
  node[above]{4} ++(1,1)--
  node[above]{5} ++(1,1)--
  node[above]{6} ++(1,1)--
  node[above]{7} ++(1,1)--
  node[above]{8} ++(1,1)--
  node[above]{9} ++(1,1)--
  ++(0,4)--
  node[above]{$3^*$} ++(1,1)--
  node[above]{$4^*$} ++(1,1)--
  node[above]{$5^*$} ++(1,1)--
  (14,19);
  
  \draw[line width=2pt,dashed] (7,0)--
  ++(0,2)--
  node[above]{$3$} ++(1,1)--
  ++(0,2)--
  node[below]{$6$} ++(1,1)--
  ++(0,1)--
  node[below]{$8$} ++(1,1)--
  ++(0,6)--
  node[above]{$4^*$} ++(1,1)--
  node[above]{$5^*$} ++(1,1)--
  node[above]{$6^*$} ++(1,1)--
  (13,19);

  \draw[line width=2pt,dashed] (6,0)--
  ++(0,2)--
  node[above]{$3$} ++(1,1)--
  ++(0,4)--
  node[above]{$8$} ++(1,1)--
  ++(0,7)--
  node[above]{$5^*$} ++(1,1)--
  (9,19); 

  \draw[line width=2pt, blue] (11,0)--
  ++(0,1)--
  node[above,black]{$2$} ++(1,1)--
  (12,19);
\end{tikzpicture}
\caption{The tableau $T'=T_{r+1}\sqcup \dots\sqcup T_{k+2}\sqcup
  \widetilde{T}_{k+1} \sqcup \pd^{M_k}(\widetilde{U}_{k})= T_4\sqcup
  \widetilde{T}_{3}\sqcup \pd^{3}(\widetilde{U}_2)$ and the corresponding
  $n$-path $\pp'$.}
   \label{fig:Phi4}
 \end{figure}

  In Step 3-3, $\widetilde{T}_{k+1}\sqcup\widetilde{U}_{k}
  =\widetilde{T}_{3}\sqcup\widetilde{U}_{2}$ is obtained from
  $T_{k+1}\sqcup\pd^{m_k}(U_k)=T_3\sqcup\pd^{2}(U_2)$ by replacing the part
  equal to $\Tab(\mathbf{q})$ by $\Tab(\mathbf{q'})$, see Figure~\ref{fig:Phi2}.
  We then compute $\pu^{M_k}(\widetilde{U}_{k})=\pu^{3}(\widetilde{U}_{2})$ as
  in Figure~\ref{fig:Phi3}. Finally, the tableau $T'=T_{r+1}\sqcup \dots\sqcup
  T_{k+2}\sqcup \widetilde{T}_{k+1} \sqcup \pd^{M_k}(\widetilde{U}_{k})=
  T_4\sqcup \widetilde{T}_{3}\sqcup \pd^{3}(\widetilde{U}_2)$ and the
  corresponding $n$-path $\pp'$ are obtained as in Figure~\ref{fig:Phi4}.

\section*{Acknowledgments}
The author is grateful to Darij Grinberg for helpful discussions and useful
comments, and also for pointing out many typos. The author is also grateful to
the anonymous referees and U-Keun Song for useful comments. This work was done
while the author was participating the 2020 program in Algebraic and Enumerative
Combinatorics at Institut Mittag-Leffler. The author would like to thank the
institute for the hospitality and Sara Billey, Petter Br\"and\'en, Sylvie
Corteel, and Svante Linusson for organizing the program.

This material is based upon work supported by the Swedish Research
Council under grant no. 2016-06596 while the author was in residence at Institut
Mittag-Leffler in Djursholm, Sweden during the winter of 2020.

\end{document}